\newcommand\rem[2][]{}
\tikzset{arrow/.style={postaction={decorate,thick,decoration={markings,mark = at position #1 with {\arrow{>}}}}},arrow/.default=0.5}
\tikzset{invarrow/.style={postaction={decorate,thick,decoration={markings,mark = at position #1 with {\arrow{<}}}}},invarrow/.default=0.5}
\newcommand{\bra}[1]{\left\langle #1\right|}
\newcommand{\ket}[1]{\left|#1\right\rangle}
\renewcommand\ss{\scriptstyle}
\newcommand\CC{{\mathbb C}}
\newcommand\PP{{\mathbb P}}
\newcommand\ZZ{{\mathbb Z}}
\newcommand\wt{\mathrm{wt}}
\newcommand\mdeg{\operatorname{mdeg}}
\newcommand\codim{\operatorname{codim}}
\newcommand\spec{\operatorname{Spec}}
\newcommand\proj{\operatorname{Proj}}
\long\def\junk#1{}
\title[Shuffle algebras, lattice paths
and the commuting scheme]{Shuffle algebras, lattice paths\\
and the commuting scheme}
\address{A.~Garbali, P.~Zinn-Justin, School of Mathematics and Statistics, University of Melbourne, Victoria 3010, Australia.}
\author{Alexandr Garbali}
\email{alexandr.garbali@unimelb.edu.au}
\author{Paul Zinn-Justin}
\email{pzinn@unimelb.edu.au}
\thanks{We thank Allen Knutson, Peter MacNamara, Andrei Negut, Gufang Zhao for discussions.}
\date{\today}
\newtheorem{thm}{Theorem}
\newtheorem{conj}{Conjecture}
\newtheorem{ex}{Example}
\newtheorem{cor}{Corollary}
\newtheorem{prop}{Proposition}
\newtheorem{rmk}{Remark}
\newtheorem{lem}{Lemma}
\newcommand\F{\mathfrak F}
\newcommand\g{\mathfrak{gl}}
\renewcommand\H{{\mathcal H}}
\newcommand\Hn{{\mathcal H}_n}
\newcommand\FF{{\mathbb F}}
\newcommand\QQ{{\mathbb Q}}
\newcommand\Z{{\mathrm Z}}
\newcommand\A{{\mathrm A}}
\tikzset{arrow/.style={postaction={decorate,thick,decoration={markings,mark = at position #1 with {\arrow{>}}}}},arrow/.default=0.5}
\tikzset{invarrow/.style={postaction={decorate,thick,decoration={markings,mark = at position #1 with {\arrow{<}}}}},invarrow/.default=0.5}
\begin{document}
    \begin{abstract}
    The commutative trigonometric shuffle algebra $\A$ is a space of symmetric rational functions satisfying certain wheel conditions \cite{FO,FHHSY}. We describe a ring isomorphism between $\A$ and the center of the Hecke algebra
    using a realization of the elements of $\A$ as partition functions of coloured lattice paths associated to the $R$-matrix of $\mathcal U_{t^{1/2}}(\widehat{gl}_{\infty})$. As an application, we compute under certain conditions the Hilbert series of the {\em commuting scheme}\/ and identify it with a particular element of the shuffle algebra $\A$, thus providing a combinatorial formula for it as a ``domain wall'' type partition function of coloured lattice paths.
    \end{abstract}

\maketitle

\begin{flushright}\textit{
Dedicated to Jasper Stokman on the occasion of his 50th birthday
}
\end{flushright}
\section{Introduction}\label{sec:intro}
\subsection{Center of Hecke and shuffle products}\label{sec:intro-center}
Consider the {\em Hecke algebra} $\Hn$ ($n\in\ZZ_{\ge0}$), which we define for our purposes to be the algebra (over a field extension $\FF$ of $\QQ(t)$ to be specified below) with generators
$T_1,\ldots,T_{n-1}$ and relations
\[
T_iT_{i+1}T_i = T_{i+1}T_i T_{i+1}\qquad T_i^2=(t-1)T_i+t\qquad T_iT_j=T_jT_i\ (|i-j|\ge2)
\]

The standard basis $(T_w)_{w\in \mathcal S_{n}}$ of $\Hn$ is obtained by writing $T_w:=T_{i_1}\ldots T_{i_k}$
if $w=s_{i_1}\ldots s_{i_k}$ is a reduced word in the elementary transpositions $s_i$. Denote by $|w|=k$ the length of $w$.

Combine the centers $\Z(\Hn)$ into a graded algebra
\[
\Z := \bigoplus_{n\ge 0} \Z(\Hn)
\]
by defining a shuffle product
\begin{align}\label{eq:defZstar}
*: \Z(\H_k)\otimes \Z(\H_\ell) &\to \Z(\H_{k+\ell})
\\\notag
x &\mapsto \sum_{w\in \mathcal{S}^{k,\ell}}
t^{-|w|} T_{w} x T_{w^{-1}}
\end{align}
where $\H_k\otimes \H_\ell$ is embedded inside  $\H_{k+\ell}$ in the obvious way, and $\mathcal{S}^{k,\ell}$ is the set of shortest representatives of
cosets in $\mathcal{S}_{k+\ell}/ (\mathcal{S}_k\times\mathcal{S}_\ell)$.

Let us also consider $\Lambda$, the algebra of {\em symmetric functions}\/ with coefficients in $\FF$. One can for example define it as a polynomial ring in countably many variables: $\Lambda:=\FF[p_1,p_2,\ldots]$ where $\deg p_r=r$.

There is a graded algebra isomorphism  $\Phi$ between $\Z$ and $\Lambda$ \cite{Wan-Frob}, which will be reviewed in \S \ref{sec:hecke};
it is a deformation of the classical Frobenius map in the context of
the symmetric group.

In what follows we fix the base field to be
\[
\FF = \QQ(q,t)
\]
Define $\FF[x_1^\pm,\ldots,x_n^\pm]_0$ to be the space of Laurent polynomials with specific degree bounds:
\begin{equation}\label{eq:degbounds}
\FF[x_1^\pm,\ldots,x_n^\pm]_0 = 
\FF\text{-span of } \left\{
x_1^{i_1}\ldots x_n^{i_n}
,\ 
i_1,\ldots,i_n\in\ZZ:\ \left|\sum_{j=1}^r i_j\right|\le r(n-r),\ 0\le r\le n
\right\}
\end{equation}
(in particular, these are homogeneous polynomials of degree $0$)
and
$\FF[x_1^\pm,\ldots,x_n^\pm]^{\mathcal S_n}_0$ to be its subspace of symmetric Laurent polynomials.

Consider\footnote{$\A_n$ is not quite the shuffle algebra as normally defined, but is more convenient for us; the standard shuffle algebra, denoted by $\mathcal A_n$, will be given in \S\ref{sec:shuffle}.}
\[
\A_n = \left\{ P \in \FF[x_1^\pm,\ldots,x_n^\pm]^{\mathcal S_n}_0:
\ P(x,qx,tx,x_4,\ldots,x_n)=P(tqx,qx,tx,x_4,\ldots,x_n)=0\right\}
\]
One can again introduce a shuffle product
\begin{align*}
*: \A_k\otimes \A_\ell &\to \A_{k+\ell}
\\
P\otimes Q &\mapsto 
\sum_{w\in \mathcal{S}^{k,\ell}}
\left[P(x_{w(1)},\ldots,x_{w(k)})Q(x_{w(k+1)},\ldots,x_{w(k+\ell)})
\prod_{\substack{1\le i\le k\\k+1\le j\le k+\ell}}
\tilde\omega(x_{w(i)}/x_{w(j)})
\right]
\end{align*}
where 
$\tilde\omega(x)=\frac{(1-q^{-1}x)(1-qt^{-1}x)(1-t x)}{-x(1-x)}$
making
\[
\A=\bigoplus_{n\ge 0} \A_n
\]
a graded algebra. There is another graded algebra isomorphism $\Upsilon$ from $\A$ to $\Lambda$ \cite{FHHSY}, which will be reviewed in \S \ref{sec:macdo}.

It is natural to ask if there is an interpretation of the isomorphism $\Upsilon^{-1}\circ \Phi$ from $\Z$ to $\A$.
Note however that $\Lambda$ possesses graded algebra automorphisms, and in particular 
we shall allow ourselves to work modulo the automorphisms that rescale the variables $p_r$. 

\subsection{Lattice paths}\label{sec:intro-lattice}
\newcommand\conn{\text{conn}}
Consider a $n\times n$ square grid, and a set of $n$ paths entering from the left external edges of the grid and exiting at the top. 
We label incoming paths on the left $1,\ldots,n$ from top to bottom.
We say that a set of paths $P$ has connectivity $\conn(P)=w\in\mathcal S_n$
if the path labelled $i$ (i.e., the $i^{\rm th}$ incoming path on the left counted from the top) 
exits at the $w^{-1}(i)^{\rm th}$ location at the top (counted from the left). 

\begin{ex}Here are all the paths with connectivity $2431$:
\begin{center}
\begin{tikzpicture}[scale=0.6]\draw[dotted] (0.5,0.5) grid (4.5,-3.5);\begin{scope}[very thick,shift={(0.5,0.5)},scale=1cm]\draw[red!80!black] svg {M 0 -.5 L 1 -.5 L 2 -.5 L 3 -.5 L 3.5 -.5 3.5 0}; \draw[green!60!black] svg {M 0 -1.5 L .5 -1.5 .5 -1 L .5 0}; \draw[blue] svg {M 0 -2.5 L 1 -2.5 L 1.5 -2.5 1.5 -2 L 1.5 -1.5 2 -1.5 L 2.5 -1.5 2.5 -1 L 2.5 0}; \draw[yellow!90!black] svg {M 0 -3.5 L .5 -3.5 .5 -3 L .5 -2 L .5 -1.5 1 -1.5 L 1.5 -1.5 1.5 -1 L 1.5 0}; \end{scope}\end{tikzpicture} 
\begin{tikzpicture}[scale=0.6]\draw[dotted] (0.5,0.5) grid (4.5,-3.5);\begin{scope}[very thick,shift={(0.5,0.5)},scale=1cm]\draw[red!80!black] svg {M 0 -.5 L 1 -.5 L 2 -.5 L 3 -.5 L 3.5 -.5 3.5 0}; \draw[green!60!black] svg {M 0 -1.5 L .5 -1.5 .5 -1 L .5 0}; \draw[blue] svg {M 0 -2.5 L .5 -2.5 .5 -2 L .5 -1.5 1 -1.5 L 2 -1.5 L 2.5 -1.5 2.5 -1 L 2.5 0}; \draw[yellow!90!black] svg {M 0 -3.5 L .5 -3.5 .5 -3 L .5 -2.5 1 -2.5 L 1.5 -2.5 1.5 -2 L 1.5 -1 L 1.5 0}; \end{scope}\end{tikzpicture} 
\begin{tikzpicture}[scale=0.6]\draw[dotted] (0.5,0.5) grid (4.5,-3.5);\begin{scope}[very thick,shift={(0.5,0.5)},scale=1cm]\draw[red!80!black] svg {M 0 -.5 L 1 -.5 L 2 -.5 L 3 -.5 L 3.5 -.5 3.5 0}; \draw[green!60!black] svg {M 0 -1.5 L .5 -1.5 .5 -1 L .5 0}; \draw[blue] svg {M 0 -2.5 L 1 -2.5 L 2 -2.5 L 2.5 -2.5 2.5 -2 L 2.5 -1 L 2.5 0}; \draw[yellow!90!black] svg {M 0 -3.5 L .5 -3.5 .5 -3 L .5 -2 L .5 -1.5 1 -1.5 L 1.5 -1.5 1.5 -1 L 1.5 0}; \end{scope}\end{tikzpicture} 
\begin{tikzpicture}[scale=0.6]\draw[dotted] (0.5,0.5) grid (4.5,-3.5);\begin{scope}[very thick,shift={(0.5,0.5)},scale=1cm]\draw[red!80!black] svg {M 0 -.5 L 1 -.5 L 2 -.5 L 3 -.5 L 3.5 -.5 3.5 0}; \draw[green!60!black] svg {M 0 -1.5 L .5 -1.5 .5 -1 L .5 0}; \draw[blue] svg {M 0 -2.5 L .5 -2.5 .5 -2 L .5 -1.5 1 -1.5 L 2 -1.5 L 2.5 -1.5 2.5 -1 L 2.5 0}; \draw[yellow!90!black] svg {M 0 -3.5 L 1 -3.5 L 1.5 -3.5 1.5 -3 L 1.5 -2 L 1.5 -1 L 1.5 0}; \end{scope}\end{tikzpicture} 
\begin{tikzpicture}[scale=0.6]\draw[dotted] (0.5,0.5) grid (4.5,-3.5);\begin{scope}[very thick,shift={(0.5,0.5)},scale=1cm]\draw[red!80!black] svg {M 0 -.5 L 1 -.5 L 2 -.5 L 3 -.5 L 3.5 -.5 3.5 0}; \draw[green!60!black] svg {M 0 -1.5 L .5 -1.5 .5 -1 L .5 0}; \draw[blue] svg {M 0 -2.5 L 1 -2.5 L 2 -2.5 L 2.5 -2.5 2.5 -2 L 2.5 -1 L 2.5 0}; \draw[yellow!90!black] svg {M 0 -3.5 L 1 -3.5 L 1.5 -3.5 1.5 -3 L 1.5 -2 L 1.5 -1 L 1.5 0}; \end{scope}\end{tikzpicture} 
\end{center}
\end{ex}

To each such set of lattice paths $P$ we associate a (Boltzmann) weight defined as the product over each vertex of the square grid
of a factor that depends on the configuration around that vertex:
\begin{equation}\label{eq:wt}
\wt(P)=\prod_{i,j=1}^n
\begin{cases}
1-t&
\begin{tikzpicture}[baseline=-3pt]
\draw[dotted] (-0.5,0) -- (0.5,0) (0,-0.5) -- (0,0.5);
\useasboundingbox (0,-0.7) -- (0,0.7);
\draw[very thick,red!80!black] (-0.5,0) -- (0,0) -- (0,0.5);
\end{tikzpicture}
,
\begin{tikzpicture}[baseline=-3pt]
\draw[dotted] (-0.5,0) -- (0.5,0) (0,-0.5) -- (0,0.5);
\useasboundingbox (0,-0.7) -- (0,0.7);
\draw[very thick,red!80!black] (-0.5,0) -- (0,0) -- (0,0.5);
\draw[very thick,green!60!black] (0.5,0) -- (0,0) -- (0,-0.5);
\end{tikzpicture}
\\
q^{-1}x_ix_j^{-1}(1-t)&
\begin{tikzpicture}[baseline=-3pt]
\draw[dotted] (-0.5,0) -- (0.5,0) (0,-0.5) -- (0,0.5);
\useasboundingbox (0,-0.7) -- (0,0.7);
\draw[very thick,red!80!black] (0.5,0) -- (0,0) -- (0,-0.5);
\end{tikzpicture}
,
\begin{tikzpicture}[baseline=-3pt]
\draw[dotted] (-0.5,0) -- (0.5,0) (0,-0.5) -- (0,0.5);
\useasboundingbox (0,-0.7) -- (0,0.7);
\draw[very thick,green!60!black] (-0.5,0) -- (0,0) -- (0,0.5);
\draw[very thick,red!80!black] (0.5,0) -- (0,0) -- (0,-0.5);
\end{tikzpicture}
\\
t(1-q^{-1} x_i x_j^{-1})&
\begin{tikzpicture}[baseline=-3pt]
\draw[dotted] (-0.5,0) -- (0.5,0) (0,-0.5) -- (0,0.5);
\useasboundingbox (0,-0.7) -- (0,0.7);
\draw[very thick,green!60!black] (0,-0.5) -- (0,0.5);
\draw[very thick,red!80!black] (-0.5,0) -- (0.5,0);
\end{tikzpicture}
,
\begin{tikzpicture}[baseline=-3pt]
\draw[dotted] (-0.5,0) -- (0.5,0) (0,-0.5) -- (0,0.5);
\useasboundingbox (0,-0.7) -- (0,0.7);
\draw[very thick,red!80!black] (-0.5,0) -- (0.5,0);
\end{tikzpicture}
\\
1-q^{-1} x_i x_j^{-1}&
\begin{tikzpicture}[baseline=-3pt]
\draw[dotted] (-0.5,0) -- (0.5,0) (0,-0.5) -- (0,0.5);
\useasboundingbox (0,-0.7) -- (0,0.7);
\draw[very thick,red!80!black] (0,-0.5) -- (0,0.5);
\draw[very thick,green!60!black] (-0.5,0) -- (0.5,0);
\end{tikzpicture}
,
\begin{tikzpicture}[baseline=-3pt]
\draw[dotted] (-0.5,0) -- (0.5,0) (0,-0.5) -- (0,0.5);
\useasboundingbox (0,-0.7) -- (0,0.7);
\draw[very thick,red!80!black] (0,-0.5) -- (0,0.5);
\end{tikzpicture}
\\
1-t q^{-1} x_ix_j^{-1}&
\begin{tikzpicture}[baseline=-3pt]
\draw[dotted] (-0.5,0) -- (0.5,0) (0,-0.5) -- (0,0.5);
\useasboundingbox (0,-0.7) -- (0,0.7);
\end{tikzpicture}
\end{cases}
\end{equation}
where the convention is that colours can be substituted as long as one preserves the ordering;
that is, if two paths are present at a vertex, red (resp.\ green) stands for the one with the lower (resp.\ greater) label.

To any element $c = \sum_{v\in\mathcal S_n} c_v T_v\in \Z(\H_n)$ ($c_v\in\FF$) of the center of the Hecke algebra we associate
the {\em partition function}
\begin{equation}\label{eq:deff}
f(c) := \alpha_n
\sum_{\substack{\text{lattice paths }P\\\text{on the }n\times n\text{ grid}}} \wt(P) c_{\conn(P)}
\end{equation}
where $\alpha_n=(q/t)^{\frac{n(n-1)}{2}}(1-t)^{-n}$ is a constant which we introduce for convenience. 

We can now formulate our first theorem:
\begin{thm}\label{thm:square}
There is a commuting square of graded algebra morphisms
\begin{center}
\begin{tikzcd}
\Z \arrow[r, "\Phi"] \arrow[d,"f"]
& \Lambda \arrow[d, "\sigma_{q^{-1}}\sigma_t^{-1}"] \\
\A \arrow[r, "\Upsilon"]
& \Lambda
\end{tikzcd}
\end{center}
where $\sigma_{q^{-1}}\sigma_t^{-1}$ is the automorphism that sends $p_r$ to $(-1)^{r-1}\frac{\ss (1-q)^r(1-t^r) }{\ss (1-q^r)(1-t)^r}p_r$.
\end{thm}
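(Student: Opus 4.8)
The plan is to leverage that $\Phi$ and $\Upsilon$ are graded algebra isomorphisms and that $\sigma:=\sigma_{q^{-1}}\sigma_t^{-1}$ is a graded automorphism of $\Lambda$: once the single identity of maps $\Upsilon\circ f=\sigma\circ\Phi$ is established, $f=\Upsilon^{-1}\circ\sigma\circ\Phi$ is automatically a graded algebra morphism and the square commutes. Since $f$ is defined combinatorially, I would nevertheless proceed in three steps: (i) $f$ is well defined, i.e. $f(c)\in\A_n$ for $c\in\Z(\Hn)$; (ii) $f$ is a morphism for the two shuffle products; (iii) $\Upsilon\circ f=\sigma\circ\Phi$. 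The engine throughout is that the weights \eqref{eq:wt} are the entries of the trigonometric $R$-matrix $\check R(x_i/x_j)$ of $\mathcal U_{t^{1/2}}(\widehat{\mathfrak{gl}}_\infty)$, so they satisfy the Yang--Baxter equation and, in the braid limit, degenerate to the Hecke generators $T_i$; combined with the centrality of $c$ this governs all the required structure.

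For (i), degree-$0$ homogeneity and the bounds $|\sum_{j\le r}i_j|\le r(n-r)$ are elementary: every weight in \eqref{eq:wt} is homogeneous of degree $0$ and its only non-constant term is a scalar multiple of a single $x_ax_b^{-1}$ ($a$ the row, $b$ the column index), and such a term changes the partial sum $\sum_{j\le r}i_j$ by $\pm1$ only when exactly one of $a,b$ is $\le r$, of which positions there are $r(n-r)$ of each sign. Symmetry in $x_i,x_{i+1}$ I would prove by the standard train argument: an extra $\check R(x_i/x_{i+1})$ vertex attached to the incoming rows $i,i+1$ is transported through the bulk by Yang--Baxter and emerges on the outgoing columns $i,i+1$; reading its boundary action shows $f(c)(\dots,x_{i+1},x_i,\dots)=f(T_icT_i^{-1})(\dots,x_i,x_{i+1},\dots)$, and centrality $T_icT_i^{-1}=c$ gives the claim (the normalisation $\alpha_n$ and the $t^{-|w|}$ are what make the boundary action exactly conjugation by $T_i$). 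The wheel conditions are the delicate point: at $(x_1,x_2,x_3)=(x,qx,tx)$ and $(tqx,qx,tx)$ the relevant spectral ratios hit the values at which two adjacent $\check R$-matrices fuse onto a proper subrepresentation incompatible with the boundary colours, forcing $f(c)$ to vanish; I would run the usual fusion/projector argument at these degeneration points.

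For (ii) I would realise the product by gluing grids. Writing the $(k+\ell)\times(k+\ell)$ lattice as a $k\times k$ block, an $\ell\times\ell$ block and the region in which the two groups of paths interact, the sum over interleavings of the groups at the boundary reproduces exactly the coset sum $\sum_{w\in\mathcal S^{k,\ell}}t^{-|w|}T_w(-)T_{w^{-1}}$ defining $*$ on $\Z$, while the interaction region collapses, after Yang--Baxter and the centrality of the two factors, to $\prod_{i\le k<j}\tilde\omega(x_{w(i)}/x_{w(j)})$. Matching these two collapses --- in particular extracting the \emph{rational} mixing function $\tilde\omega$ from the manifestly polynomial lattice weights --- is the main obstacle of the proof, and is where the precise forms of $\tilde\omega$ and $\alpha_n$ must enter.

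Finally, for (iii), $f$ being a morphism reduces the identity to a set of algebra generators of $\Z$. As $\Z\cong\Lambda=\FF[p_1,p_2,\dots]$ is a polynomial algebra, I would take $\zeta_r:=\Phi^{-1}(p_r)\in\Z(\H_r)$: the right-hand side is $\sigma(p_r)=(-1)^{r-1}\frac{\ss(1-q)^r(1-t^r)}{\ss(1-q^r)(1-t)^r}p_r$ by definition, while the left-hand side requires evaluating $\Upsilon$ on the explicit $r$-variable partition function $f(\zeta_r)$. Using the descriptions of $\Phi$ from \cite{Wan-Frob} and of $\Upsilon$ from \cite{FHHSY}, this becomes an explicit computation from which the stated scalar is read off; as recalled in \S\ref{sec:intro-center} the normalisation is only pinned down modulo the rescaling automorphisms of $\Lambda$, so the structural content of (iii) is the proportionality $\Upsilon(f(\zeta_r))\propto p_r$, the given constant fixing the representative.
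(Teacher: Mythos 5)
Your step (i) essentially matches the paper's (the degree bounds of Lemma~\ref{lem:deg}, the symmetry via the same Yang--Baxter ``train'' argument in Proposition~\ref{prop:charsym}, and the wheel conditions via inserting $\check R(t^{-1})\propto A_2$ against $S_n$ as in Lemma~\ref{lem:wheel}), and your overall logic --- well-definedness, multiplicativity, verification on generators --- would suffice if carried out. But the two load-bearing steps (ii) and (iii) are precisely where the proposal has genuine gaps, and in both cases the hard part is flagged as an ``obstacle'' with no mechanism supplied. For (ii), no gluing argument is given that could produce the rational mixing factors $\tilde\omega(x_{w(i)}/x_{w(j)})$, which have poles at $x_{w(i)}=x_{w(j)}$, out of the manifestly Laurent-polynomial lattice weights: the individual coset terms $t^{-|w|}T_w(\cdot)T_{w^{-1}}$ dressed by $\tilde\omega$-products are \emph{not} partition functions of sub-grids (only the full symmetrized sum is a polynomial), so no local collapse after Yang--Baxter moves can isolate them. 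For (iii), your generators $\zeta_r=\Phi^{-1}(p_r)$ are the deformed cycle sums built from Jucys--Murphy elements, and neither the partition function $f(\zeta_r)$ nor its image under $\Upsilon$ is computable by any method you indicate; ``an explicit computation from which the stated scalar is read off'' is the entire difficulty, not a routine verification. Note also that the theorem asserts the square with the specific automorphism $\sigma_{q^{-1}}\sigma_t^{-1}$, so the constant must actually be computed; the introduction's remark about working modulo rescalings concerns the naturalness of the question, not the proof obligation.

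The paper's proof is structured exactly so as to avoid both of your hard steps. Since $f$ is linear, it suffices to verify $\Upsilon\circ f=\sigma_{q^{-1}}\sigma_t^{-1}\circ\Phi$ on the \emph{linear} basis $A_{\lambda_1}*\cdots*A_{\lambda_m}$ of $\Z$ (whose $\Phi$-images are the $e_{\lambda_1}\cdots e_{\lambda_m}$ by Proposition~\ref{prop:identsym}); the morphism property of $f$ then comes out for free from $f=\Upsilon^{-1}\circ\sigma_{q^{-1}}\sigma_t^{-1}\circ\Phi$, rather than being an input. The key identification $f(A_{\lambda_1}*\cdots*A_{\lambda_m})=V_n\,\epsilon_{\lambda_1,\ldots,\lambda_m}$ is established not by gluing grids but by a uniqueness-from-specializations principle (Proposition~\ref{prop:P_specs}): both sides obey the same recursion as $x_n\to0$ (Proposition~\ref{prop:zerospec} versus Lemma~\ref{lem:epsrec}) and the same vanishing and factorization at $(x,tx,\ldots,t^m x)$ (Proposition~\ref{prop:nasty} versus \eqref{eq:epsA_0}--\eqref{eq:epsB_0}), and these data pin the element down. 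On this basis both sides of the square are tractable: $f(A_n)$ factorizes completely \eqref{eq:m1}, and $\Upsilon'(\epsilon_n(q_3))=\sigma_{q^{-1}}\sigma_t^{-1}(e_n)$ is known from \cite{FHHSY} via \eqref{eq:Ueps}. If you want to rescue your plan, this is the missing idea: replace the direct multiplicativity proof and the $p_r$-generators by a computation on the spanning set of antisymmetrizer shuffle products, characterized through specializations.
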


\subsection{Partition function with connectivity $12\dots n$}\label{sec:intro-identity}
We consider here the special case of the unit $1_n\in \Z(\H_n)$ of $\H_n$ which corresponds to the partition function with connectivity $12\dots n$
\[
f(1_n) = \alpha_n
\sum_{\substack{\text{lattice paths }P\\\text{on the }n\times n\text{ grid}\\\conn(P)=1_n}} \wt(P)
\]
\begin{ex}Here are the identity paths at $n=3$:
\begin{center}
\begin{tikzpicture}[scale=0.6]\draw[dotted] (0.5,0.5) grid (3.5,-2.5);\begin{scope}[very thick,scale=1cm]\draw[red!80!black] svg {M .5 0 L 1 0 1 .5}; \draw[green!60!black] svg {M .5 -1 L 1 -1 1 -.5 L 1 0 1.5 0 L 2 0 2 .5}; \draw[blue] svg {M .5 -2 L 1.5 -2 L 2.5 -2 L 3 -2 3 -1.5 L 3 -.5 L 3 .5}; \end{scope}\end{tikzpicture}
\begin{tikzpicture}[scale=0.6]\draw[dotted] (0.5,0.5) grid (3.5,-2.5);\begin{scope}[very thick,scale=1cm]\draw[red!80!black] svg {M .5 0 L 1 0 1 .5}; \draw[green!60!black] svg {M .5 -1 L 1.5 -1 L 2 -1 2 -.5 L 2 .5}; \draw[blue] svg {M .5 -2 L 1 -2 1 -1.5 L 1 -.5 L 1 0 1.5 0 L 2.5 0 L 3 0 3 .5}; \end{scope}\end{tikzpicture}
\begin{tikzpicture}[scale=0.6]\draw[dotted] (0.5,0.5) grid (3.5,-2.5);\begin{scope}[very thick,scale=1cm]\draw[red!80!black] svg {M .5 0 L 1 0 1 .5}; \draw[green!60!black] svg {M .5 -1 L 1 -1 1 -.5 L 1 0 1.5 0 L 2 0 2 .5}; \draw[blue] svg {M .5 -2 L 1 -2 1 -1.5 L 1 -1 1.5 -1 L 2 -1 2 -.5 L 2 0 2.5 0 L 3 0 3 .5}; \end{scope}\end{tikzpicture}
\begin{tikzpicture}[scale=0.6]\draw[dotted] (0.5,0.5) grid (3.5,-2.5);\begin{scope}[very thick,scale=1cm]\draw[red!80!black] svg {M .5 0 L 1 0 1 .5}; \draw[green!60!black] svg {M .5 -1 L 1 -1 1 -.5 L 1 0 1.5 0 L 2 0 2 .5}; \draw[blue] svg {M .5 -2 L 1 -2 1 -1.5 L 1 -1 1.5 -1 L 2.5 -1 L 3 -1 3 -.5 L 3 .5}; \end{scope}\end{tikzpicture}
\begin{tikzpicture}[scale=0.6]\draw[dotted] (0.5,0.5) grid (3.5,-2.5);\begin{scope}[very thick,scale=1cm]\draw[red!80!black] svg {M .5 0 L 1 0 1 .5}; \draw[green!60!black] svg {M .5 -1 L 1 -1 1 -.5 L 1 0 1.5 0 L 2 0 2 .5}; \draw[blue] svg {M .5 -2 L 1.5 -2 L 2 -2 2 -1.5 L 2 -.5 L 2 0 2.5 0 L 3 0 3 .5}; \end{scope}\end{tikzpicture}
\begin{tikzpicture}[scale=0.6]\draw[dotted] (0.5,0.5) grid (3.5,-2.5);\begin{scope}[very thick,scale=1cm]\draw[red!80!black] svg {M .5 0 L 1 0 1 .5}; \draw[green!60!black] svg {M .5 -1 L 1.5 -1 L 2 -1 2 -.5 L 2 .5}; \draw[blue] svg {M .5 -2 L 1.5 -2 L 2.5 -2 L 3 -2 3 -1.5 L 3 -.5 L 3 .5}; \end{scope}\end{tikzpicture}
\begin{tikzpicture}[scale=0.6]\draw[dotted] (0.5,0.5) grid (3.5,-2.5);\begin{scope}[very thick,scale=1cm]\draw[red!80!black] svg {M .5 0 L 1 0 1 .5}; \draw[green!60!black] svg {M .5 -1 L 1.5 -1 L 2 -1 2 -.5 L 2 .5}; \draw[blue] svg {M .5 -2 L 1.5 -2 L 2 -2 2 -1.5 L 2 -1 2.5 -1 L 3 -1 3 -.5 L 3 .5}; \end{scope}\end{tikzpicture}
\begin{tikzpicture}[scale=0.6]\draw[dotted] (0.5,0.5) grid (3.5,-2.5);\begin{scope}[very thick,scale=1cm]\draw[red!80!black] svg {M .5 0 L 1 0 1 .5}; \draw[green!60!black] svg {M .5 -1 L 1 -1 1 -.5 L 1 0 1.5 0 L 2 0 2 .5}; \draw[blue] svg {M .5 -2 L 1.5 -2 L 2 -2 2 -1.5 L 2 -1 2.5 -1 L 3 -1 3 -.5 L 3 .5}; \end{scope}\end{tikzpicture}
\end{center}
\end{ex}
\begin{thm}\label{thm:f1n}
The partition function $f(1_n)$, as an element of $\FF[x_1^\pm,\ldots,x_n^\pm]^{\mathcal S_n}_0$, is determined by $f(1_{0})=1$ and  the recurrence relation 
\begin{equation}\label{eq:frec}
(x_1 + \dots + x_n)  f(1_n)=
\sum_{i=1}^n 
x_i  f(1_{n-1})[\hat{x_i}]
\prod_{\substack{j=1 \\ j\neq i}}^n \frac{(1-q^{-1} x_j/x_i)(1- q t^{-1} x_j/x_i)
(1- t x_i/x_j)}{-x_j/x_i(1-x_j/x_i)}
\end{equation}
where $f(1_{n-1})[\hat{x_i}] \in \FF[x_1^\pm,\ldots,x_{i-1},x_{i+1},\ldots,x_n^\pm]^{\mathcal S_n}_0$.
\end{thm}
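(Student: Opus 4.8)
First note that the word ``determined'' requires little. One has $f(1_0)=1$ directly from \eqref{eq:deff}: the empty grid carries the single empty configuration of weight $1$, and $\alpha_0=1$. The right-hand side of \eqref{eq:frec} involves only $f(1_{n-1})$, and since $\FF[x_1^\pm,\ldots,x_n^\pm]$ is an integral domain, $x_1+\cdots+x_n$ is not a zero divisor. Hence, once we know that the genuine partition function $f(1_n)$ satisfies \eqref{eq:frec}, it is the unique element of $\FF[x_1^\pm,\ldots,x_n^\pm]^{\mathcal S_n}_0$ with that property given $f(1_{n-1})$; in particular the right-hand side is then automatically divisible by $x_1+\cdots+x_n$. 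So the entire content is to prove that $f(1_n)$ itself satisfies \eqref{eq:frec}.

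\textbf{Integrable set-up.} The weights \eqref{eq:wt} are the entries of the $R$-matrix of $\mathcal U_{t^{1/2}}(\widehat{\g}_\infty)$ evaluated at the ratio $x_i/x_j$, so the model is integrable: the $R$-matrices obey the Yang--Baxter equation, and $f(1_n)$ is a matrix element of a monodromy built from the $n$ lines carrying rapidities $x_1,\ldots,x_n$, with boundary conditions forcing colour $i$ to enter on the left and exit on top in the identity pattern $12\cdots n$. The plan is to phrase $f(1_n)$ as such a matrix element so that a line-by-line reduction becomes available.

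\textbf{Peeling one line.} The core step is to isolate a single rapidity. Using the exchange relation I would drag the line carrying a chosen $x_i$ from the bulk to the boundary; by Yang--Baxter this leaves the partition function unchanged, and at the boundary the dragged line freezes into the unique monotone trajectory from its left entry to its top exit. The weight it accumulates as it crosses every other line, read off as the ordered product of the $R$-matrix entries of \eqref{eq:wt}, should be exactly
\[
\prod_{\substack{j=1\\ j\neq i}}^n \frac{(1-q^{-1}x_j/x_i)(1-qt^{-1}x_j/x_i)(1-tx_i/x_j)}{-x_j/x_i(1-x_j/x_i)},
\]
the asymmetry between the factors $(1-q^{-1}x_j/x_i),(1-qt^{-1}x_j/x_i)$ and the factor $(1-tx_i/x_j)$ reflecting precisely the fixed order in which the dragged line meets the others. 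The residual configuration is a copy of the $(n-1)$-line model in the remaining variables, i.e.\ $f(1_{n-1})[\hat x_i]$, while the ratio of normalisations $\alpha_n/\alpha_{n-1}$ together with the diagonal exchange coefficient supplies the prefactor $x_i$. Summing over which line is peeled produces the sum $\sum_{i=1}^n$.

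\textbf{Main obstacle.} The delicate point is the global factor $x_1+\cdots+x_n$ on the left of \eqref{eq:frec}. A single peeling is not symmetric in the $x_a$, and the naive line-by-line argument tends to reproduce $f(1_n)$ rather than $(x_1+\cdots+x_n)f(1_n)$, so the symmetric prefactor must emerge from reorganising the sum. I expect this to be the crux, and I would attack it by running the drag with an auxiliary line inserted at a spectral value where one weight develops a pole, so that a residue computation yields $(x_1+\cdots+x_n)f(1_n)$ on one side and the stated sum on the other; in effect one recognises multiplication by $p_1=x_1+\cdots+x_n$ as the insertion of one extra rapidity and extracts \eqref{eq:frec} as the associated sum rule. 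As a cross-check and an alternative route, Theorem~\ref{thm:square} identifies $f(1_n)$ with $\Upsilon^{-1}$ of a rescaled power of $p_1$, so \eqref{eq:frec} should also follow from the Pieri-type rule for shuffle multiplication by a degree-one element, once the precise normalisation relating $f$ to the shuffle product of \S\ref{sec:shuffle} is in hand.
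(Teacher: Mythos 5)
There is a genuine gap, and it sits exactly where you yourself locate the crux. Your main route — peeling off the line carrying $x_i$ and freezing it into a monotone trajectory — cannot work as described, for two concrete reasons. First, the Boltzmann weights \eqref{eq:wt} are polynomials in the ratios $x_i/x_j$, so no product of local weights along a frozen line can produce the denominators $1-x_j/x_i$ appearing in \eqref{eq:frec}; the individual summands on the right-hand side have poles at $x_i=x_j$ that cancel only in the symmetrized sum, so they are not partition functions of residual $(n-1)\times(n-1)$ configurations times a local factor. Second, freezing arguments in the quantum inverse scattering method operate at special points where some vertex weight vanishes, whereas \eqref{eq:frec} is an identity in all variables, with the global factor $x_1+\cdots+x_n$ that you correctly flag as the obstacle; your proposed fix (an auxiliary rapidity plus a residue computation) is stated as a hope, not carried out. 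This is precisely the point acknowledged in \S\ref{sec:intro-identity}: the recurrence has an unusual form from the QISM viewpoint, the authors state they do not know how to prove it by QISM, and their proof is deliberately indirect.

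The paper's actual argument is the one you only gesture at in your last sentences, and even there you misidentify the key object: $f(1_n)$ does not correspond under $\Upsilon$ to a rescaled power of $p_1$. One has $\Phi(1_n)=\tilde h_n=\sigma_t(h_n)$ (equal to $p_1^n$ only at $t=1$), so by Theorem~\ref{thm:square}, $\Upsilon(f(1_n))=\sigma_{q^{-1}}\sigma_t^{-1}\sigma_t(h_n)=\sigma_q(e_n)$, a plethystically deformed elementary symmetric function. The proof then runs: Theorem~\ref{thm:square} is established in \S\ref{sec:proof} by matching $f(A_{\lambda_1}*\cdots*A_{\lambda_m})$ with $V_n\,\epsilon_\lambda$ through specializations; this yields the corollary \eqref{eq:feqkappa}, $f(1_n)=V_n\kappa_n$; and the heart of the matter — that the degree-one Pieri recursion $\left(x_1+\cdots+x_n\right)\kappa_n=\kappa_{n-1}*x_1$ of \eqref{eq:kappa_x} is consistent with $\Upsilon'(\kappa_n)=\sigma_q(e_n)$ and uniquely determines $\kappa_n$ — is Proposition~\ref{prop:kappa}, proved via Lemma~\ref{lem:recs} and the Wronski relations \eqref{eq:Wronski_lem}. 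Writing \eqref{eq:kappa_x} out as \eqref{eq:kappa_rec2} and multiplying by $V_n$ gives \eqref{eq:frec}. Your reduction-to-uniqueness paragraph is fine, but the two substantive ingredients — the identification $\Upsilon(f(1_n))=\sigma_q(e_n)$ and the shuffle-algebraic Pieri/Wronski machinery behind Proposition~\ref{prop:kappa} — are missing from your proposal, and the route you develop in their place is the one the paper explicitly could not make work.
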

In \S \ref{sec:Lattice_path} we will show that the partition functions $f(c)$ can be interpreted using the $R$-matrices of $\mathcal U_{t^{1/2}}(\widehat{gl}_{\infty})$. This means that one can study them using the quantum inverse scattering method (QISM) \cite{KBI} (see also \cite{JLam}), and indeed the partition function $f(1_n)$ was already considered in \cite{BW-coloured}. We note, however, that the  recurrence relation \eqref{eq:frec} has an unusual form from the point of view of the QISM due to the division by $x_1 + \dots + x_j$ at every $j^{\rm th}$ step of the recursive computation of $f(1_n)$. It is not clear to us if one can prove \eqref{eq:frec} or find an alternative recursion relation for $f(1_n)$ using the QISM. Our proof of \eqref{eq:frec} is indirect; it follows as a corollary of our proof of Theorem~\ref{thm:square} given in \S\ref{sec:proof}.

\subsection{The commuting scheme}\label{sec:intro-commut}
\newcommand\C{\mathfrak C}
We now give a geometric interpretation of the partition function $f(1_n)$ of the previous section in terms of the commuting scheme. (The geometric interpretation of other $f(c)$ is left for future work.)

Given $n\in\ZZ_{\ge0}$, define the {\em commuting scheme}\/ $\C_n$ to be the affine scheme in
$\g_n(\CC)^2$ consisting of pairs of commuting $n\times n$ matrices:
\begin{equation}\label{eq:defE}
\C_n
:=
\{
(A,B)\in \g_n(\CC)\times \g_n(\CC):\ [A,B]=0
\}
\end{equation}
$\C_n$ is known to be irreducible, of dimension $n(n+1)$ \cite{MT-commut}, smooth in codimension 1 \cite{Popov-commut}, and we have the conjecture, generally attributed to Artin and Hochster in 1982:
\begin{conj}\label{conj:CM}
$\C_n$ is Cohen--Macaulay.
\end{conj}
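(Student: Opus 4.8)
The plan is to turn the Cohen--Macaulay property into the regularity of a well-chosen sequence and then to exploit the explicit formula for the Hilbert series of $\C_n$ furnished by its identification with $f(1_n)$. Recall that a finitely generated nonnegatively graded $\CC$-algebra $R$ of Krull dimension $d$ is Cohen--Macaulay if and only if there is a homogeneous system of parameters $\theta_1,\dots,\theta_d$ over which $R$ is free, equivalently such that $\theta_1,\dots,\theta_d$ form a regular sequence. First I would fix the bigrading on the coordinate ring $R_n=\CC[\C_n]$ coming from the independent scalings of $A$ and $B$; under the hypotheses of the application that is the subject of this paper, the bigraded Hilbert series of $R_n$ is, up to the standard change of variables, the partition function $f(1_n)$. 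The problem then splits into (i) exhibiting a homogeneous system of parameters of the correct length $d=n(n+1)=\dim\C_n$, and (ii) proving that it is regular.

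For step (i) I would use the simultaneous-conjugation action of $\mathrm{GL}_n$ together with the additive structure of $\g_n$: natural candidates are the power sums $\tr(A^k)$ and $\tr(B^k)$ for $1\le k\le n$, giving $2n$ elements, supplemented by a further $n(n-1)$ coordinate functions, the total count $2n+n(n-1)=n(n+1)$ matching $\dim\C_n$. Verifying that such a collection is genuinely a system of parameters is a dimension count on its common zero locus in $\C_n$, which should be accessible from the irreducibility of $\C_n$ \cite{MT-commut} and its smoothness in codimension one \cite{Popov-commut}. Granting a valid choice, one reads off the numerical shadow of Cohen--Macaulayness directly from the combinatorics: writing $f(1_n)$ over the product of the cyclotomic-type denominators attached to the degrees of the $\theta_i$, one needs the resulting $h$-polynomial to have nonnegative coefficients, and the lattice-path description of $f(1_n)$ is well suited to making this manifest.

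The hard part will be step (ii). Nonnegativity of the $h$-vector is necessary but not sufficient, and the genuine content is freeness of $R_n$ over the polynomial subring generated by the $\theta_i$. I see two plausible routes. The first is a Gr\"obner degeneration of the commutator ideal generated by the entries of $[A,B]$ to a squarefree monomial ideal whose Stanley--Reisner complex is shellable, or at least Cohen--Macaulay; since the Cohen--Macaulay property ascends from a flat special fibre to the generic one, this would finish the argument, and one would hope the path combinatorics underlying $f(1_n)$ encodes exactly such a complex. The second is to prove that $\C_n$ has rational singularities and combine this with the numerical criterion; the irreducibility and codimension-one smoothness are encouraging inputs, but controlling the singularities along the deep strata, where $A$ and $B$ share a large common Jordan structure, is delicate. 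Either way this regular-sequence step is precisely the crux on which the Artin--Hochster conjecture has long resisted proof, so I expect it, rather than the Hilbert-series bookkeeping, to be the real obstacle.
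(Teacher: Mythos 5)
There is no proof in the paper to compare against: the statement you were given is Conjecture~\ref{conj:CM}, the Artin--Hochster conjecture, which the authors explicitly state ``will not be addressed in the present paper'' and which they only ever use as a \emph{hypothesis} (in Theorem~\ref{thm:mainK} and Proposition~\ref{prop:sasha}). Your proposal, by your own admission, does not prove it either, and the gap is not a repairable detail but the entire content of the statement. Once a homogeneous system of parameters $\theta_1,\dots,\theta_d$ is fixed, freeness of $\CC[\mathfrak{C}_n]$ over $\CC[\theta_1,\dots,\theta_d]$ (equivalently, regularity of the sequence) is \emph{equivalent} to the Cohen--Macaulay property; so your step (ii) is not a reduction but a restatement of the conjecture. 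Neither of your two suggested routes is available: no Gr\"obner degeneration of the ideal generated by the entries of $[A,B]$ to a squarefree (let alone shellable) monomial ideal is known --- indeed even radicality of that ideal is open, and as the paper notes it would partly \emph{follow from} Cohen--Macaulayness rather than feed into it --- and rational singularities, like normality, is likewise an open problem for $\mathfrak{C}_n$, with only irreducibility and smoothness in codimension one available as inputs.

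There is also a circularity in your step (i) that you should flag: the identification of the (normalized) Hilbert series of $\mathfrak{C}_n$ with the partition function $f(1_n)$ is precisely Theorem~\ref{thm:mainK}, whose proof \emph{assumes} Conjecture~\ref{conj:CM} --- the unmixedness argument in Proposition~\ref{prop:sasha} needs the Cohen--Macaulay property of $\mathfrak{C}_{n-1}$ to know that the extra equations form a regular sequence. So the ``lattice-path $h$-polynomial'' you want to inspect is only known to be the $h$-polynomial of $\mathfrak{C}_n$ once the conjecture is already granted; and, as you correctly say, nonnegativity of the $h$-vector would in any case be necessary but not sufficient. (The unconditional result of the paper in this direction is Theorem~\ref{thm:mainH}, at the level of the multidegree, which carries no Cohen--Macaulay information.) In short: the proposal is a reasonable survey of why the problem is hard, but it contains no new idea that advances the statement, and its one concrete input is borrowed from a theorem conditional on the very conjecture to be proved.
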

which would imply that $\C_n$ is reduced and normal.
This conjecture will not be addressed in the present paper. \rem[gray]{but we'll use it to state our main theorem}

The group $GL_2(\CC)\times GL_n(\CC)$ acts on $\g_n(\CC)^2$ by
\[
(f,g)\in GL_2(\CC)\times GL_n(\CC)
: \begin{pmatrix}A\\B
\end{pmatrix}
\mapsto
f
\begin{pmatrix}
gAg^{-1}\\
gBg^{-1}
\end{pmatrix}
\]
and leaves $\C_n$ invariant.
In particular, the maximal torus $T=(\CC^\times)^{2+n}$ induces a $\ZZ^{2+n}$-grading of the corresponding
coordinate rings; explicitly,
\[
\deg A_{ij}=\varepsilon_1+\varepsilon_{i+2}-\varepsilon_{j+2}
\qquad
\deg B_{ij}=\varepsilon_2+\varepsilon_{i+2}-\varepsilon_{j+2}
\qquad
i,j=1,\ldots,n
\]
where $\varepsilon_i$ is the unit vector with nonzero coordinate $i$.

We consider the Hilbert series $\chi(\C_n)$ of the commuting scheme with
the multigrading above (i.e., the $T$-character of its coordinate ring);
in fact, it is convenient to consider instead
the {\em $K$-polynomial} (or Poincar\'e polynomial) of $\C_n$, defined by
\[
K_{n} = \frac{\chi(\C_n)}{\chi(\g_n(\CC)^2)}
\]
Explicitly, introducing formal variables $q_1,q_2,x_1,\ldots,x_n$ corresponding to $(\CC^{\times})^{2+n}$, one has
\[
\chi(\g_n(\CC)^2) = \prod_{i,j=1}^n 
\frac{1}{(1-q_1 x_i x_j^{-1})(1-q_2 x_i x_j^{-1})}
\]

$K_{n}$ is known to be a Laurent polynomial in $\ZZ[q_1^{\pm},q_2^{\pm},x_1^\pm,\ldots,x_n^\pm]$.
Furthermore, because of the $GL_2(\CC)\times GL_n(\CC)$-invariance of $\C_n$, 
$K_{n}$ is invariant under the action
of the corresponding Weyl group, i.e., it is a {\em symmetric}\/ Laurent polynomial in the variables
$q_1,q_2$ and $x_1,\ldots,x_n$ separately.

Finally, one can perform the substitution $q_i\mapsto 1-q_i,x_i\mapsto 1-x_i$ in $K_n$, expand in power series and keep only the
lowest degree terms in those variables.
We obtain this way a homogeneous polynomial of degree $\codim \C_n = n(n-1)$ in
$\ZZ[q_1,q_2,x_1,\ldots,x_n]$ (with its ordinary grading), called the {\em multidegree}\/ of $\C_n$ and denoted by
\[
D_n := \mdeg \C_n
\]

Our first result on the commuting scheme is the following:
\begin{thm}\label{thm:mainK}
Assuming Conjecture~\ref{conj:CM},
the following formula holds for the $K$-polynomial of the commuting scheme:
\begin{equation}\label{eq:mainK}
K_n = (q_1 q_2)^{\frac{n(n-1)}{2}} f(1_n)
\end{equation}
with the identification $q=q_1^{-1}$, $t=(q_1q_2)^{-1}$.
\end{thm}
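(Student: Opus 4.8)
The plan is to prove \eqref{eq:mainK} by showing that the geometrically defined $K$-polynomial $K_n$, after the substitution $q=q_1^{-1}$, $t=(q_1q_2)^{-1}$ and division by $(q_1q_2)^{n(n-1)/2}$, satisfies \emph{exactly} the recurrence of \Cref{thm:f1n}. Since that theorem asserts that the recurrence \eqref{eq:frec} together with the initial condition $f(1_0)=1$ determines $f(1_n)$ uniquely, and since $\C_0$ is a point so that $K_0=1$, this forces $K_n=(q_1q_2)^{n(n-1)/2}f(1_n)$. Thus the whole problem reduces to one geometric identity: a recurrence expressing $(x_1+\dots+x_n)K_n$ in terms of the $K$-polynomials of the commuting schemes $\C_{n-1}$ in the $n-1$ variables $\hat x_i$.

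The source of this recurrence should be the common-eigenvector structure of commuting pairs. A generic $(A,B)\in\C_n$ has $A$ regular semisimple, hence $A$ and $B$ are simultaneously diagonalizable and share exactly $n$ common eigenlines; this is the geometric origin of the sum $\sum_{i=1}^n$ in \eqref{eq:frec}. Concretely, I would introduce the incidence variety
\[
\tilde\C_n=\{(A,B,L):[A,B]=0,\ L\subset\CC^n\ \text{a common eigenline}\}
\]
with its two projections, one to $\C_n$ (generically $n{:}1$) and one recording $L$ together with the commuting pair $(A',B')$ induced on the quotient $\CC^n/L$. Over the space of eigenlines $\PP^{n-1}$ the fibre carries a commuting pair in $\g_{n-1}$, i.e.\ a copy of $\C_{n-1}$, together with the eigenvalue data on $L$ and the off-diagonal data coupling $L$ to the quotient. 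The denominators $\prod_{j\neq i}(1-x_j/x_i)$ in \eqref{eq:frec} are precisely the equivariant tangent weights of $\PP^{n-1}$ at its $n$ torus-fixed coordinate lines, so the recurrence is naturally read as an equivariant $K$-theoretic localization over $\PP^{n-1}$: the $i$-th fixed line contributes the $K$-polynomial of $\C_{n-1}$ in the variables $\hat x_i$, weighted by the eigenvalue factor $x_i$ and by the product of off-diagonal weights, which under $q^{-1}=q_1$, $qt^{-1}=q_2$, $t=(q_1q_2)^{-1}$ reproduce the three-term numerator of \eqref{eq:frec}.

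The Cohen--Macaulay hypothesis (\Cref{conj:CM}) is what makes this rigorous, and is the key input that allows passage from the smooth generic locus to the whole singular scheme: it should guarantee that the Hilbert series of $\C_n$ has no corrections beyond those dictated by its generic fibration over the regular-semisimple locus, equivalently that $\tilde\C_n$ is flat over $\PP^{n-1}$ with fibres whose $K$-polynomials are governed by $\C_{n-1}$. Without Cohen--Macaulayness one controls only the open dense diagonalizable locus, and the contribution of the degenerate (coincident-eigenvalue) locus to $\chi(\C_n)$ cannot be excluded.

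The main obstacle is exactly this step: rigorously establishing the geometric recurrence, including the precise scheme structure of $\tilde\C_n$ and of its fibres, and in particular accounting for the factor $(x_1+\dots+x_n)$ on the left-hand side of \eqref{eq:frec}, which does not arise from a naive localization formula and must reflect the relation between $\C_n$ and its $n{:}1$ cover $\tilde\C_n$ (an excess or ramification contribution along the locus where two common eigenlines collide). Once the recurrence is in hand, the remaining verification---translating the weights through $q=q_1^{-1}$, $t=(q_1q_2)^{-1}$, tracking the power of $q_1q_2$, and checking the base case---is routine, and the proof concludes by the uniqueness asserted in \Cref{thm:f1n}.
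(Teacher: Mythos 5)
Your skeleton coincides with the paper's actual proof: the paper introduces exactly your incidence scheme, $\F_n=\{(v,A,B)\in\PP^{n-1}\times\g_n\times\g_n: [A,B]=0,\ v \text{ a common eigenvector}\}$, localizes equivariantly over $\PP^{n-1}$ (whence your tangent-weight denominators $\prod_{j\ne i}(1-x_j/x_i)$), uses Conjecture~\ref{conj:CM} in the fiber computation, and concludes by the uniqueness of the solution of the recurrence (Proposition~\ref{prop:sasha}, matched against the shuffle recurrence \eqref{eq:kappa_rec2}, which is equivalent to your matching against \eqref{eq:frec}). But the step you flag as the main obstacle is a genuine gap, and the mechanism you guess for it is wrong. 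The factor $x_1+\cdots+x_n$ does \emph{not} come from an excess or ramification correction along the locus where eigenlines collide. The paper's device is to push forward the \emph{twisted} sheaf $\mathcal O_{\F_n}(1)$ rather than $\mathcal O_{\F_n}$. On the localization side this inserts into the $i$-th fixed-point term the weight $x_i$ of $\mathcal O(1)$ at the coordinate line $e_i$ (so your ``eigenvalue factor $x_i$'' is also misattributed: the eigenvalues $a,b$ on $L$ carry weights $q_1,q_2$, not $x_i$), giving \eqref{eq:recone}. On the pushforward side one proves $H^0(\F_n,\mathcal O_{\F_n}(1))\cong S_1$, a \emph{free} module over $R=\mathcal O(\mathfrak C_n)$ with basis the homogeneous coordinates $v_1,\ldots,v_n$ of weights $x_1,\ldots,x_n$, and $H^{>0}=0$ (a concrete patchwise computation exploiting that the defining equations \eqref{eq:quadrel} have degree at most $2$ in $z=v_1/v_2$, which lets one reduce any section to degree $\le 1$); hence $p_*[\mathcal O_{\F_n}(1)]=(x_1+\cdots+x_n)[\mathcal O_{\mathfrak C_n}]$, which is \eqref{eq:rectwo} and is precisely your missing left-hand factor. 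With the untwisted sheaf this fails: the paper's $n=2$ toy example shows $p_*\mathcal O$ is free of rank $2$ with generators of weights $1$ and $q_1$, so a naive $n{:}1$-cover argument cannot produce the recurrence.

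Your account of where Cohen--Macaulayness enters is also off target. Flatness of $\F_n$ over $\PP^{n-1}$ is automatic (all fibers are isomorphic via the $GL_n$-action), and the issue is not excluding contributions from the non-diagonalizable locus. CM of $\mathfrak C_{n-1}$ is invoked for \emph{unmixedness}: the fiber over $e_n$ is cut out of $\mathfrak C_{n-1}\times\mathbb{A}^{2n+2}$ by the $n-1$ coupling equations \eqref{eq:extra}, which matches the codimension, so they form a regular sequence; the Koszul resolution then yields the factors $(1-q_1q_2\,x_n/x_j)$, while the factors $(1-q_1 x_j/x_n)(1-q_2 x_j/x_n)$ come from the vanishing entries of the last columns of $A$ and $B$. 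Without this regular-sequence statement the fiber's $K$-class, and hence the three-term numerator in \eqref{eq:sasha}, is unjustified. Once \eqref{eq:recone} and \eqref{eq:rectwo} are in place, your concluding step (uniqueness from Theorem~\ref{thm:f1n} plus the base case $K_0=1$ and bookkeeping of the $(q_1q_2)^{n(n-1)/2}$ prefactor under $q=q_1^{-1}$, $t=(q_1q_2)^{-1}$) is exactly the paper's.
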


Performing the substitution of variables above, it is convenient to slightly rearrange the Boltzmann weights of \S\ref{sec:intro-lattice} to:
\begin{equation}\label{eq:wtK}
\wt_K(P)=\prod_{i,j=1}^n
\begin{cases}
1-q_1q_2&
\begin{tikzpicture}[baseline=-3pt]
\draw[dotted] (-0.5,0) -- (0.5,0) (0,-0.5) -- (0,0.5);
\useasboundingbox (0,-0.7) -- (0,0.7);
\draw[very thick,red!80!black] (-0.5,0) -- (0,0) -- (0,0.5);
\end{tikzpicture}
,
\begin{tikzpicture}[baseline=-3pt]
\draw[dotted] (-0.5,0) -- (0.5,0) (0,-0.5) -- (0,0.5);
\useasboundingbox (0,-0.7) -- (0,0.7);
\draw[very thick,red!80!black] (0.5,0) -- (0,0) -- (0,-0.5);
\end{tikzpicture}
,
\begin{tikzpicture}[baseline=-3pt]
\draw[dotted] (-0.5,0) -- (0.5,0) (0,-0.5) -- (0,0.5);
\useasboundingbox (0,-0.7) -- (0,0.7);
\draw[very thick,red!80!black] (-0.5,0) -- (0,0) -- (0,0.5);
\draw[very thick,green!60!black] (0.5,0) -- (0,0) -- (0,-0.5);
\end{tikzpicture}
\\
q_1 (1-q_1q_2)&
\begin{tikzpicture}[baseline=-3pt]
\draw[dotted] (-0.5,0) -- (0.5,0) (0,-0.5) -- (0,0.5);
\useasboundingbox (0,-0.7) -- (0,0.7);
\draw[very thick,green!60!black] (-0.5,0) -- (0,0) -- (0,0.5);
\draw[very thick,red!80!black] (0.5,0) -- (0,0) -- (0,-0.5);
\end{tikzpicture}
\\
1-q_1 x_i x_j^{-1}&
\begin{tikzpicture}[baseline=-3pt]
\draw[dotted] (-0.5,0) -- (0.5,0) (0,-0.5) -- (0,0.5);
\useasboundingbox (0,-0.7) -- (0,0.7);
\draw[very thick,green!60!black] (0,-0.5) -- (0,0.5);
\draw[very thick,red!80!black] (-0.5,0) -- (0.5,0);
\end{tikzpicture},
\begin{tikzpicture}[baseline=-3pt]
\draw[dotted] (-0.5,0) -- (0.5,0) (0,-0.5) -- (0,0.5);
\useasboundingbox (0,-0.7) -- (0,0.7);
\draw[very thick,red!80!black] (-0.5,0) -- (0.5,0);
\end{tikzpicture}
\\
q_1q_2(1-q_1 x_i x_j^{-1})&
\begin{tikzpicture}[baseline=-3pt]
\draw[dotted] (-0.5,0) -- (0.5,0) (0,-0.5) -- (0,0.5);
\useasboundingbox (0,-0.7) -- (0,0.7);
\draw[very thick,red!80!black] (0,-0.5) -- (0,0.5);
\draw[very thick,green!60!black] (-0.5,0) -- (0.5,0);
\end{tikzpicture}
\\
q_2(1-q_1 x_i x_j^{-1})&
\begin{tikzpicture}[baseline=-3pt]
\draw[dotted] (-0.5,0) -- (0.5,0) (0,-0.5) -- (0,0.5);
\useasboundingbox (0,-0.7) -- (0,0.7);
\draw[very thick,red!80!black] (0,-0.5) -- (0,0.5);
\end{tikzpicture}
\\
x_ix_j^{-1}(1-q_2 x_i^{-1}x_j)&
\begin{tikzpicture}[baseline=-3pt]
\draw[dotted] (-0.5,0) -- (0.5,0) (0,-0.5) -- (0,0.5);
\useasboundingbox (0,-0.7) -- (0,0.7);
\end{tikzpicture}
\end{cases}
\end{equation}
Then \eqref{eq:mainK} can be formulated equivalently as
\[
K_n=
(1-q_1q_2)^{-n}\sum_{\substack{\text{lattice paths }P\\\text{on the }n\times n\text{ grid}\\\text{with identity connectivity}}} \wt_K(P)
\]

\begin{ex}At $n=2$, the weights are
\begin{align*}
\wt_K\left(
\begin{tikzpicture}[scale=0.75,baseline=(current  bounding  box.center)]\draw[dotted] (0.5,0.5) grid (2.5,-1.5);\begin{scope}[very thick,scale=1cm]\draw[red!80!black] svg {M .5 0 L 1 0 1 .5}; \draw[green!60!black] svg {M .5 -1 L 1 -1 1 -.5 L 1 0 1.5 0 L 2 0 2 .5}; \end{scope}\end{tikzpicture}\right)
&=(1-q_1q_2)^3(1-q_2)
\\
\wt_K\left(\begin{tikzpicture}[scale=0.75,baseline=(current  bounding  box.center)]\draw[dotted] (0.5,0.5) grid (2.5,-1.5);\begin{scope}[very thick,scale=1cm]\draw[red!80!black] svg {M .5 0 L 1 0 1 .5}; \draw[green!60!black] svg {M .5 -1 L 1.5 -1 L 2 -1 2 -.5 L 2 .5}; \end{scope}\end{tikzpicture}\right)
&=q_2(1-q_1q_2)^2(1-q_1\,x_1x_2^{-1})(1-q_1\,x_2x_1^{-1})
\end{align*}
and one can check that their sum divided by the common factor $(1-q_1q_2)^2$ indeed reproduces
the $K$-polynomial of $\C_2$
\[
K_2=
1+q_1^{2}q_2-q_1q_2x_{1}x_{2}^{-1}-q_1q_2-q_1q_2x_{1}^{-1}x_{2}+q_1q_2^{2}
\]
\end{ex}

Define another (Boltzmann) weight in a similar fashion:
\begin{equation}\label{eq:wtH}
\wt_H(P)=\prod_{i,j=1}^n
\begin{cases}
q_1+q_2&
\begin{tikzpicture}[baseline=-3pt]
\draw[dotted] (-0.5,0) -- (0.5,0) (0,-0.5) -- (0,0.5);
\useasboundingbox (0,-0.7) -- (0,0.7);
\draw[very thick,red!80!black] (-0.5,0) -- (0,0) -- (0,0.5);
\end{tikzpicture}
,
\begin{tikzpicture}[baseline=-3pt]
\draw[dotted] (-0.5,0) -- (0.5,0) (0,-0.5) -- (0,0.5);
\useasboundingbox (0,-0.7) -- (0,0.7);
\draw[very thick,red!80!black] (0.5,0) -- (0,0) -- (0,-0.5);
\end{tikzpicture}
,
\begin{tikzpicture}[baseline=-3pt]
\draw[dotted] (-0.5,0) -- (0.5,0) (0,-0.5) -- (0,0.5);
\useasboundingbox (0,-0.7) -- (0,0.7);
\draw[very thick,red!80!black] (-0.5,0) -- (0,0) -- (0,0.5);
\draw[very thick,green!60!black] (0.5,0) -- (0,0) -- (0,-0.5);
\end{tikzpicture}
,
\begin{tikzpicture}[baseline=-3pt]
\draw[dotted] (-0.5,0) -- (0.5,0) (0,-0.5) -- (0,0.5);
\useasboundingbox (0,-0.7) -- (0,0.7);
\draw[very thick,green!60!black] (-0.5,0) -- (0,0) -- (0,0.5);
\draw[very thick,red!80!black] (0.5,0) -- (0,0) -- (0,-0.5);
\end{tikzpicture}
\\
q_1 + x_i - x_j&
\begin{tikzpicture}[baseline=-3pt]
\draw[dotted] (-0.5,0) -- (0.5,0) (0,-0.5) -- (0,0.5);
\useasboundingbox (0,-0.7) -- (0,0.7);
\draw[very thick,green!60!black] (0,-0.5) -- (0,0.5);
\draw[very thick,red!80!black] (-0.5,0) -- (0.5,0);
\end{tikzpicture}
,
\begin{tikzpicture}[baseline=-3pt]
\draw[dotted] (-0.5,0) -- (0.5,0) (0,-0.5) -- (0,0.5);
\useasboundingbox (0,-0.7) -- (0,0.7);
\draw[very thick,red!80!black] (0,-0.5) -- (0,0.5);
\draw[very thick,green!60!black] (-0.5,0) -- (0.5,0);
\end{tikzpicture}
,
\begin{tikzpicture}[baseline=-3pt]
\draw[dotted] (-0.5,0) -- (0.5,0) (0,-0.5) -- (0,0.5);
\useasboundingbox (0,-0.7) -- (0,0.7);
\draw[very thick,red!80!black] (-0.5,0) -- (0.5,0);
\end{tikzpicture}
,
\begin{tikzpicture}[baseline=-3pt]
\draw[dotted] (-0.5,0) -- (0.5,0) (0,-0.5) -- (0,0.5);
\useasboundingbox (0,-0.7) -- (0,0.7);
\draw[very thick,red!80!black] (0,-0.5) -- (0,0.5);
\end{tikzpicture}
\\
q_2 - x_i + x_j&
\begin{tikzpicture}[baseline=-3pt]
\draw[dotted] (-0.5,0) -- (0.5,0) (0,-0.5) -- (0,0.5);
\useasboundingbox (0,-0.7) -- (0,0.7);
\end{tikzpicture}
\end{cases}
\end{equation}
where colours of paths can be substituted freely.

We can state our second result:
\begin{thm}\label{thm:mainH}
The following formula holds for the multidegree of the commuting scheme:
\begin{equation}\label{eq:mainH}
D_n := (q_1+q_2)^{-n}\sum_{\normalfont{\substack{\text{lattice paths }P\\\text{on the }n\times n\text{ grid}\\\text{with identity connectivity}}}}
\wt_H(P)
\end{equation}
\end{thm}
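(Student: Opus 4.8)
The plan is to deduce Theorem~\ref{thm:mainH} from the $\wt_K$-formulation of Theorem~\ref{thm:mainK} by passing to lowest-degree parts under the substitution $q_i\mapsto 1-q_i$, $x_i\mapsto 1-x_i$. Recall that $D_n=\mdeg\C_n$ is by definition the lowest-degree homogeneous part of $K_n$ after this substitution, and that this part sits in degree $\codim\C_n=n(n-1)$. The first step is to rewrite the $\wt_K$-formula as the genuine Laurent-polynomial identity $(1-q_1q_2)^n K_n=\sum_P \wt_K(P)$, the sum being over identity-connectivity lattice paths on the $n\times n$ grid, and then to apply the substitution to both sides, expanding in power series in the new variables $q_1,q_2,x_1,\dots,x_n$, each assigned degree $1$.

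The heart of the argument is a vertex-by-vertex computation showing that the lowest-degree part of each Boltzmann weight in \eqref{eq:wtK} is exactly the corresponding weight in \eqref{eq:wtH}. Concretely, I would check that the three weights equal to $1-q_1q_2$ and the weight $q_1(1-q_1q_2)$ all have leading part $q_1+q_2$; that $1-q_1x_ix_j^{-1}$, $q_1q_2(1-q_1x_ix_j^{-1})$ and $q_2(1-q_1x_ix_j^{-1})$ all have leading part $q_1+x_i-x_j$; and that $x_ix_j^{-1}(1-q_2x_i^{-1}x_j)$ has leading part $q_2-x_i+x_j$. These are precisely the $\wt_H$-weights, each homogeneous of degree $1$. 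Since a path $P$ has $n^2$ vertices, it follows that $\wt_K(P)$ has lowest-degree part $\wt_H(P)$, homogeneous of degree $n^2$; being a product of nonzero polynomials in an integral domain, this leading term is nonzero. Likewise $(1-q_1q_2)^n\mapsto(q_1+q_2-q_1q_2)^n$ has leading part $(q_1+q_2)^n$.

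It then remains to extract the degree-$n^2$ part of the substituted identity. On the left, the leading parts of $(1-q_1q_2)^n$ and of $K_n$ are $(q_1+q_2)^n$ and $D_n$ respectively, and their product is nonzero (integral domain), so the degree-$n^2$ part of the left-hand side is $(q_1+q_2)^n D_n$. On the right, each summand $\wt_K(P)$ begins in degree $n^2$ with homogeneous part $\wt_H(P)$, so the degree-$n^2$ part of $\sum_P\wt_K(P)$ is simply $\sum_P\wt_H(P)$ (these contributions all live in the same degree, so no cross-degree cancellation enters). Equating the two gives $(q_1+q_2)^n D_n=\sum_P\wt_H(P)$, and dividing by the nonzerodivisor $(q_1+q_2)^n$ yields \eqref{eq:mainH}; as a byproduct this shows that $(q_1+q_2)^n$ divides $\sum_P\wt_H(P)$ and that $D_n$ is a genuine polynomial.

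I expect the main difficulty to be bookkeeping rather than conceptual. The one point requiring care is the legitimacy of the step ``lowest-degree part of a product equals product of lowest-degree parts,'' which is valid here precisely because the relevant leading parts are nonzero; this in turn rests on $\mdeg\C_n\neq0$, equivalently on $\codim\C_n=n(n-1)$, so that the lowest-degree part of $K_n$ genuinely occupies degree $n(n-1)$ and not higher. The vertex expansions themselves are routine power-series computations, the only mildly delicate one being the empty-vertex weight $x_ix_j^{-1}(1-q_2x_i^{-1}x_j)$, whose two Laurent factors must be expanded jointly before reading off the degree-$1$ part $q_2-x_i+x_j$.
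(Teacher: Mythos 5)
Your GRR-limit computation is sound as far as it goes: the vertex-by-vertex expansions under $q_i\mapsto 1-q_i$, $x_i\mapsto 1-x_i$ are exactly right (each weight in \eqref{eq:wtK} has lowest-degree part the corresponding weight in \eqref{eq:wtH}), and the degree bookkeeping $n^2=n+n(n-1)$, together with the nonvanishing of $D_n$ in degree $\codim\mathfrak{C}_n=n(n-1)$, legitimizes extracting the degree-$n^2$ homogeneous part on both sides. Indeed the paper itself records precisely your implication, in the sentence immediately following the theorem: Conjecture~\ref{conj:CM} and Theorem~\ref{thm:mainK} imply Theorem~\ref{thm:mainH}, ``as can be easily checked.''

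The genuine gap is that this is all you have proved: an implication conditional on an open conjecture. Theorem~\ref{thm:mainK} assumes Conjecture~\ref{conj:CM} (Cohen--Macaulayness of $\mathfrak{C}_n$, unproven since 1982), and the hypothesis enters irreducibly in Proposition~\ref{prop:sasha}, where unmixedness is invoked to show that the extra equations \eqref{eq:extra} form a regular sequence; there is no known way to run that $K$-theoretic recursion without it. Theorem~\ref{thm:mainH}, by contrast, is asserted unconditionally, and the paper proves it without the conjecture by changing both the geometry and the cohomology theory: it works in equivariant cohomology with the flag-variety version $\tilde{\F}_n=\{(\mathfrak b,A,B):[A,B]=0,\ A,B\in\mathfrak b\}$, whose projection to $\mathfrak{C}_n$ is generically $n!$-to-$1$. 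Because the multidegree is a cycle-level invariant (additive over top-dimensional components with multiplicities), generic finiteness alone suffices — no regular-sequence or sheaf-cohomology input — and localization yields the symmetrized expression \eqref{eq:mdegsym}, which is then recognized as the GRR limit of $\frac{1}{n!}f(A_1*\cdots*A_1)$; since $A_1*\cdots*A_1=n!\,1_n$ at $t=1$, this limit coincides with that of $f(1_n)$, and a weight expansion like yours finishes the proof. So to prove the theorem as stated you must route around Theorem~\ref{thm:mainK} in some such way, rather than through it.
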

Note that Conjecture~\ref{conj:CM} and Theorem~\ref{thm:mainK} imply Theorem~\ref{thm:mainH}, as can be easily checked. However Theorem~\ref{thm:mainH} can be proven independently of Conjecture~\ref{conj:CM}.

\subsection{Plan of the paper}
The rest of the paper is organized as follows. 
In \S \ref{sec:hecke}, we provide details on the Hecke algebra construction of lattice paths partition functions,
and on the map $\Phi$ from the center of Hecke to symmetric functions. 
In \S \ref{sec:shuffle}, we give details on the shuffle algebra, the map $\Upsilon$ and derive the recurrence relations which are required for proving Theorems \ref{thm:square}-\ref{thm:mainK}. In \S \ref{sec:proof}, we prove our Theorem~\ref{thm:square} by an explicit comparison of $\Phi$ and $\Upsilon\circ f$, and obtain Theorem~\ref{thm:f1n} as a corollary of all that precedes.
In \S \ref{sec:commut}, we turn to the geometry of the commuting scheme
and prove Theorems~\ref{thm:mainK} and \ref{thm:mainH}, ending with some comments.

\section{Hecke algebra and lattice paths}\label{sec:hecke}
\subsection{The Hecke algebra and its center}

As in \S \ref{sec:intro-center}, we consider the Hecke algebra $\H_{n}$, an $\FF$-algebra with generators
$T_1,\ldots,T_{n-1}$ and relations
\[
T_iT_{i+1}T_i = T_{i+1}T_i T_{i+1}\qquad T_i^2=(t-1)T_i+t\qquad T_iT_j=T_jT_i\ (|i-j|\ge2)
\]
In this paper we choose $\FF$ to be $\QQ(q,t)$, though we shall not need the variable $q$ until \S \ref{sec:exch}.
Note $\H_0\cong \H_1\cong \FF$.

We have the simple lemma:
\begin{lem}\label{lem:basic}
\begin{align*}
T_i T_w &= \begin{cases}
T_{s_i w} & |s_i w| > |w|
\\
(t-1)T_w + t T_{s_i w} & |s_i w| < |w|
\end{cases}
\\
T_w T_i &= \begin{cases}
T_{ws_i} & |w s_i| > |w|
\\
(t-1)T_w + t T_{w s_i} & |w s_i| < |w|
\end{cases}
\end{align*}
\end{lem}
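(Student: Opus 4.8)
The plan is to derive both identities from just two inputs: the well-definedness of the basis elements $T_w$, and the quadratic relation $T_i^2=(t-1)T_i+t$. I would first recall why $T_w$ is well defined. By the Matsumoto--Tits theorem, any two reduced words for a fixed $w\in\mathcal S_n$ are connected by a sequence of braid moves $s_is_{i+1}s_i\leftrightarrow s_{i+1}s_is_{i+1}$ and commutations $s_is_j\leftrightarrow s_js_i$ ($|i-j|\ge2$); since the generators $T_i$ satisfy exactly these relations, the product $T_{i_1}\cdots T_{i_k}$ depends only on $w=s_{i_1}\cdots s_{i_k}$ and not on the chosen reduced word. This is what licenses all the manipulations below.

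For $T_iT_w$, I would split on whether left multiplication by $s_i$ increases or decreases length. If $|s_iw|>|w|$, then $|s_iw|=|w|+1$, so prepending $s_i$ to any reduced word $s_{i_1}\cdots s_{i_k}$ for $w$ produces a reduced word $s_is_{i_1}\cdots s_{i_k}$ for $s_iw$; by definition $T_{s_iw}=T_iT_{i_1}\cdots T_{i_k}=T_iT_w$, which is the first case. If instead $|s_iw|<|w|$, set $w'=s_iw$, so that $|w'|=|w|-1$ and $w=s_iw'$ with $|s_iw'|>|w'|$. The length-increasing case just treated gives $T_w=T_iT_{w'}$, and therefore
\[
T_iT_w=T_i^2T_{w'}=\bigl((t-1)T_i+t\bigr)T_{w'}=(t-1)T_iT_{w'}+tT_{w'}=(t-1)T_w+tT_{s_iw},
\]
using $w'=s_iw$ in the last step. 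This is the second case.

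The identity for $T_wT_i$ is proven by the mirror-image argument, multiplying on the right: if $|ws_i|>|w|$ then appending $s_i$ to a reduced word for $w$ stays reduced and gives $T_wT_i=T_{ws_i}$, while if $|ws_i|<|w|$ one writes $w=w's_i$ with $|w'|=|w|-1$, so $T_w=T_{w'}T_i$ and $T_wT_i=T_{w'}T_i^2=(t-1)T_w+tT_{ws_i}$. Alternatively, one can deduce the right-handed identity from the left-handed one by applying the anti-automorphism of $\H_n$ fixing each $T_i$, which sends $T_w$ to $T_{w^{-1}}$ and exchanges the two statements.

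I do not expect a genuine obstacle here: the content is entirely the interplay between the length function on $\mathcal S_n$ and the quadratic relation. The one point that must be handled carefully is the reliance on well-definedness of $T_w$ --- every appeal to ``the reduced word for $s_iw$ obtained by prepending $s_i$'' implicitly uses that the resulting element is independent of the word chosen, so the Matsumoto--Tits input should be in place before the case analysis is run.
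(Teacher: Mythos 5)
Your proof is correct and complete: the paper states Lemma~\ref{lem:basic} without proof, treating it as the standard fact about Hecke algebras, and your argument --- Matsumoto--Tits well-definedness of $T_w$, the length-increasing case by concatenating reduced words, and the length-decreasing case via $T_i^2=(t-1)T_i+t$ applied to $T_w=T_iT_{s_iw}$ --- is precisely the canonical derivation being invoked. The attention you give to why $T_w$ is well defined, and the observation that the right-handed identity follows either by the mirror argument or by the anti-automorphism $T_w\mapsto T_{w^{-1}}$, are both sound.
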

expressing the action of $T_i$ on the standard basis $(T_w)_{w\in \mathcal S_{n}}$, where $T_w:=T_{i_1}\ldots T_{i_k}$
if $w=s_{i_1}\ldots s_{i_k}$ is a reduced word in the elementary transpositions $s_i$, and $|w|=k$.

We shall often need two particular elements of the center $\Z(\H_n)$, namely
the complete symmetrizer
\begin{equation}\label{eq:defS}
S_n=\sum_{w\in \mathcal S_n} T_w
\end{equation}
and the complete antisymmetrizer
\begin{equation}\label{eq:defA}
A_n=\sum_{w\in \mathcal S_n} (-t)^{-|w|} T_w
\end{equation}
Both are primitive idempotents up to normalization. \rem[gray]{there'll be pictures expressing this in section \ref{sec:proof}}

Next, we consider the shuffle product structure:
\begin{prop}\label{prop:Zshuffle}
The product $*$ in \eqref{eq:defZstar} is a well-defined map, making $\Z=\bigoplus_{n\ge0} \Z(\H_n)$ a graded (associative) algebra.
\end{prop}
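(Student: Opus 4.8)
The plan is to establish three things, in decreasing order of difficulty: that the image of $*$ lands in $\Z(\H_{k+\ell})$ (well-definedness), that $*$ is associative, and that it is graded and unital. Bilinearity of the formula \eqref{eq:defZstar} in the embedded element $z:=x\otimes y$ is automatic, so well-definedness reduces entirely to the centrality claim. Write $W=\mathcal S_{k+\ell}$ and $W_J=\mathcal S_k\times\mathcal S_\ell$, and identify $\H_k\otimes\H_\ell$ with the parabolic subalgebra $\H_J\subseteq\H_{k+\ell}$ generated by $\{T_i:i\neq k\}$. Since $x$ is central in $\H_k$ and $y$ in $\H_\ell$, the element $z$ commutes with every generator $T_\alpha$ ($\alpha\in J$), i.e.\ $z$ is central in $\H_J$. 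The task is then to show that $x*y=\sum_{w\in\mathcal S^{k,\ell}}t^{-|w|}T_w z T_{w^{-1}}$ commutes with each $T_i$, $1\le i\le k+\ell-1$.

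The key device is a trichotomy for left multiplication by a simple reflection $s_i$ on a minimal coset representative $w\in\mathcal S^{k,\ell}$ (characterized by $|ws|>|w|$ for all simple $s\in W_J$): exactly one of (I) $s_iw\in\mathcal S^{k,\ell}$ with $s_iw>w$; (II) $s_iw\in\mathcal S^{k,\ell}$ with $s_iw<w$; or (III) $s_iw=w s_\alpha\notin\mathcal S^{k,\ell}$ for a unique simple reflection $s_\alpha\in W_J$, necessarily with $s_iw>w$. In case (III) one slides $T_\alpha$ through $z$ using centrality in $\H_J$ and applies Lemma~\ref{lem:basic} at both ends to check that the single summand for $w$ already satisfies $T_iT_wzT_{w^{-1}}=T_wzT_{w^{-1}}T_i$. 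Cases (I) and (II) are interchanged by the involution $w\leftrightarrow s_iw$, so I would pair each such $w$ with $s_iw$ and expand $T_i$ against both summands via Lemma~\ref{lem:basic}; the quadratic relation $T_i^2=(t-1)T_i+t$ makes the paired contributions to $T_i(x*y)$ and to $(x*y)T_i$ coincide once the prefactors $t^{-|w|}$ are matched. Summing over the three cases yields $T_i(x*y)=(x*y)T_i$, hence centrality. I expect this case analysis — in particular the bookkeeping of the $t$-powers under the pairing — to be the main obstacle; the rest is formal.

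For associativity I would substitute the definitions into $(x*y)*z'$ (with $x\in\Z(\H_k)$, $y\in\Z(\H_\ell)$, $z'\in\Z(\H_m)$), producing a double sum with cross terms $T_wT_v$ for $w\in\mathcal S^{k+\ell,m}$ and $v\in\mathcal S^{k,\ell}\hookrightarrow\mathcal S_{k+\ell}$. By transitivity of minimal length coset representatives along the chain of parabolics $\mathcal S_k\times\mathcal S_\ell\times\mathcal S_m\subseteq\mathcal S_{k+\ell}\times\mathcal S_m\subseteq\mathcal S_{k+\ell+m}$, every such product factors as $T_wT_v=T_{wv}$ with $|wv|=|w|+|v|$, and $\{wv\}$ runs bijectively over the set $\mathcal S^{k,\ell,m}$ of minimal representatives of $\mathcal S_{k+\ell+m}/(\mathcal S_k\times\mathcal S_\ell\times\mathcal S_m)$. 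This collapses the double sum to the symmetric triple shuffle $\sum_{u\in\mathcal S^{k,\ell,m}}t^{-|u|}T_u(x\otimes y\otimes z')T_{u^{-1}}$. Running the identical computation on $x*(y*z')$ through the chain $\mathcal S_k\times\mathcal S_\ell\times\mathcal S_m\subseteq\mathcal S_k\times\mathcal S_{\ell+m}\subseteq\mathcal S_{k+\ell+m}$ produces the same triple sum, so the two bracketings agree. Finally, $*$ manifestly maps $\Z(\H_k)\otimes\Z(\H_\ell)$ into degree $k+\ell$, and $1\in\Z(\H_0)\cong\FF$ is a two-sided unit, so $\Z$ is a graded associative unital algebra.
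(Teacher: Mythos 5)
Your proof is correct and takes essentially the same route as the paper: your Deodhar-style trichotomy is exactly the paper's four-way split of $\mathcal S^{k,\ell}$ into $\mathcal S_{\lessgtr,\lessgtr}$ (your case (III), with $s_\alpha$ a simple reflection of the first or second factor, is their $\mathcal S_{<,<}\cup\mathcal S_{>,>}$ handled by sliding the crossing through the central element, and your paired cases (I)/(II) resolved via the quadratic relation are their $\mathcal S_{<,>}\leftrightarrow\mathcal S_{>,<}$ pairing), merely written algebraically where the paper argues diagrammatically. Your associativity argument via transitivity of minimal coset representatives collapsing to the triple shuffle is precisely the paper's \eqref{eq:assoc}.
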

\rem[gray]{I removed commutative from the theorem because it's a pain to prove}
The nontrival part of the proof can be formulated by repeatedly applying Lemma~\ref{lem:basic}; instead,
we shall illustrate it by using the standard diagrammatic way to depict the Hecke algebra, as acting on $n$ strands:
\[
T_i=\begin{tikzpicture}[baseline=0.5cm]
\draw[invarrow] (0,0) -- (0,1);
\node at (1,0.5) {$\cdots$};
\draw[invarrow] (2,0) -- (2,1);
\draw[invarrow=0.75] (3,0) --  (4,1);
\draw[invarrow=0.75] (4,0) -- (3.6,0.4) (3.4,0.6) -- (3,1);
\draw[invarrow] (5,0) -- (5,1);
\node at (6,0.5) {$\cdots$};
\draw[invarrow] (7,0) -- (7,1);
\draw[decorate,decoration=brace] (2,-0.2) -- node[below] {$i-1$} (0,-0.2);
\draw[decorate,decoration=brace] (7,-0.2) -- node[below] {$n-i-1$} (5,-0.2);
\end{tikzpicture}
\]
The arrows keep track of the ordering of operators: moving forward w.r.t.\ the orientation of a line is reading an expression right to left. Sometimes we will omit arrows; then all lines are implicitly oriented downwards.
(The diagrammatic calculus that follows can be viewed as warming up for the considerably more difficult proof of our Theorem~\ref{thm:square}, which will also be diagrammatic.)
\begin{proof}
Associativity of the product is obvious: an $m$-fold product can be expressed as
\begin{equation}\label{eq:assoc}
x_1*\cdots*x_m = \sum_{w\in \mathcal S^{\lambda_1,\ldots,\lambda_m}}
t^{-|w|} T_w (x_1\otimes\cdots\otimes x_m) T_{w^{-1}}
\end{equation}
where $x_i\in \H_{\lambda_i}$ and $\mathcal S^{\lambda_1,\ldots,\lambda_m}$ is the set
of shortest representatives in $\mathcal S_n/\mathcal S_{\lambda_1,\ldots,\lambda_m}$,
$\mathcal S_{\lambda_1,\ldots,\lambda_m}=\mathcal S_{\lambda_1}\times\cdots\times \mathcal S_{\lambda_m}$.
\rem[gray]{note that this doesn't require centrality, so we have a shuffle product
on the whole of $\H$.}

The only nontrivial part is to show that $*$ is well-defined as a map from $\Z(\H_k)\otimes \Z(\H_\ell)$ to $\Z(\H_{k+\ell})$, i.e., we need to show that the right hand side of \eqref{eq:defZstar} is central, or equivalently that it commutes with all $T_i$s.

We recall that $\mathcal S^{k,\ell}$ is the set of shortest representatives
in $\mathcal S_{k+\ell}/(\mathcal S_k\times \mathcal S_\ell)$; equivalently,
it is the set of (Grassmannian) permutations of size $k+\ell$ with a unique possible descent at $(k,k+1)$.

Fixing $i=1,\ldots,n-1$, we distinguish four subsets of $\mathcal S^{k,\ell}$ according to the preimages of $i$ and $i+1$:
\[
\mathcal S_{\lessgtr,\lessgtr}=\{w\in \mathcal S^{k,\ell}:\ w^{-1}(i)\lessgtr k+1/2,\ w^{-1}(i+1)\lessgtr k+1/2\}
\]
Note that $w\mapsto s_i w$ induces a bijection between $\mathcal S_{<,>}$ and $\mathcal S_{>,<}$.
We now discuss the corresponding terms in \eqref{eq:defZstar} separately:
\begin{itemize}
\item If $w\in \mathcal S_{<,<}$, then $T_w (a\otimes b) T_{w^{-1}}$ commutes with $T_i$
provided $a\in \Z(\H_k)$, as shown diagrammatically:
\begin{multline*}
T_w (a\otimes b) T_{w^{-1}} T_i
=
\begin{tikzpicture}[rounded corners,scale=0.8,baseline=(a.base)]
\begin{scope}[every path/.style={draw=white,double=black,ultra thick, double distance=0.4pt}]
\draw (6,0.5) -- (6,2.5);
\draw (5,0.5) -- (4,2) -- (4,2.5);
\draw (4,0.5) -- (1,2) -- (1,2.5);
\draw (3,0.5) -- (5,2) -- (5,2.5);
\draw (2,0.5) -- (3,2) -- (2,2.5);
\draw (1,0.5) -- (2,2) -- (3,2.5);
\draw (6,0) -- (6,-1.5);
\draw (3,0) -- (5,-1.5);
\draw (5,0) -- (4,-1.5);
\draw (2,0) -- (3,-1.5);
\draw (1,0) -- (2,-1.5);
\draw (4,0) -- (1,-1.5);
\end{scope}
\draw[decorate,decoration=brace,sharp corners] (0.6,0.5) -- node[left] {$\ss T_{w^{-1}}$} (0.6,2);
\draw[decorate,decoration=brace,sharp corners] (0.6,-1.5) -- node[left] {$\ss T_{w}$} (0.6,0);
\node[draw=black,fill=white,rectangle,minimum height=0.4cm,minimum width=2.1cm,inner sep=0pt] at (2,0.25) (a) {$a$};
\node[draw=black,fill=white,rectangle,minimum height=0.4cm,minimum width=2.1cm,inner sep=0pt] at (5,0.25) {$b$};
\end{tikzpicture}
=
\begin{tikzpicture}[rounded corners,scale=0.8,baseline=(a.base)]
\begin{scope}[every path/.style={draw=white,double=black,ultra thick, double distance=0.4pt}]
\draw (6,0.5) -- (6,2.5);
\draw (5,0.5) -- (5,1) -- (4,2.5);
\draw (4,0.5) -- (4,1) -- (1,2.5);
\draw (3,0.5) -- (3,1) -- (5,2.5);
\draw (2,0.5) -- (1,1) -- (2,2.5);
\draw (1,0.5) -- (2,1) -- (3,2.5);
\draw (6,0) -- (6,-1.5);
\draw (3,0) -- (5,-1.5);
\draw (5,0) -- (4,-1.5);
\draw (2,0) -- (3,-1.5);
\draw (1,0) -- (2,-1.5);
\draw (4,0) -- (1,-1.5);
\end{scope}
\node[draw=black,fill=white,rectangle,minimum height=0.4cm,minimum width=2.1cm,inner sep=0pt] at (2,0.25) (a) {$a$};
\node[draw=black,fill=white,rectangle,minimum height=0.4cm,minimum width=2.1cm,inner sep=0pt] at (5,0.25) {$b$};
\end{tikzpicture}
\\
=
\begin{tikzpicture}[rounded corners,scale=0.8,baseline=(a.base)]
\begin{scope}[every path/.style={draw=white,double=black,ultra thick, double distance=0.4pt}]
\draw (6,1) -- (6,2.5);
\draw (5,1) -- (4,2.5);
\draw (4,1) -- (1,2.5);
\draw (3,1) -- (5,2.5);
\draw (1,1) -- (2,2.5);
\draw (2,1) -- (3,2.5);
\draw (6,0.5) -- (6,-1.5);
\draw (3,0.5) -- (3,0) -- (5,-1.5);
\draw (5,0.5) -- (5,0) -- (4,-1.5);
\draw (1,0.5) -- (2,0) -- (3,-1.5);
\draw (2,0.5) -- (1,0) -- (2,-1.5);
\draw (4,0.5) -- (4,0) -- (1,-1.5);
\end{scope}
\node[draw=black,fill=white,rectangle,minimum height=0.4cm,minimum width=2.1cm,inner sep=0pt] at (2,0.75) (a) {$a$};
\node[draw=black,fill=white,rectangle,minimum height=0.4cm,minimum width=2.1cm,inner sep=0pt] at (5,0.75) {$b$};
\end{tikzpicture}
=
\begin{tikzpicture}[rounded corners,scale=0.8,baseline=(a.base)]
\begin{scope}[every path/.style={draw=white,double=black,ultra thick, double distance=0.4pt}]
\draw (6,1) -- (6,2.5);
\draw (5,1) -- (4,2.5);
\draw (4,1) -- (1,2.5);
\draw (3,1) -- (5,2.5);
\draw (1,1) -- (2,2.5);
\draw (2,1) -- (3,2.5);
\draw (6,0.5) -- (6,-1.5);
\draw (3,0.5) -- (5,-1) -- (5,-1.5);
\draw (5,0.5) -- (4,-1) -- (4,-1.5);
\draw (1,0.5) -- (2,-1) -- (3,-1.5);
\draw (2,0.5) -- (3,-1) -- (2,-1.5);
\draw (4,0.5) -- (1,-1) -- (1,-1.5);
\end{scope}
\node[draw=black,fill=white,rectangle,minimum height=0.4cm,minimum width=2.1cm,inner sep=0pt] at (2,0.75) (a) {$a$};
\node[draw=black,fill=white,rectangle,minimum height=0.4cm,minimum width=2.1cm,inner sep=0pt] at (5,0.75) {$b$};
\end{tikzpicture}
= T_i T_w(a\otimes b)T_{w^{-1}}
\end{multline*}
\item Similarly, if $w\in \mathcal S_{>,>}$, then $T_w (a\otimes b) T_{w^{-1}}$ commutes with $T_i$
provided $b\in \Z(\H_\ell)$.
\item Finally, we group together the two terms corresponding to $w\in \mathcal S_{<,>}$ and $s_i w\in\mathcal S_{>,<}$ to obtain:
\begin{multline*}
(T_w(a\otimes b)T_{w^{-1}}+t^{-1}T_{s_iw}(a\otimes b)T_{w^{-1}s_i})T_i
\\
=
\begin{tikzpicture}[rounded corners,scale=0.8,baseline=(a.base)]
\begin{scope}[every path/.style={draw=white,double=black,ultra thick, double distance=0.4pt}]
\draw (6,0.5) -- (6,2.5);
\draw (5,0.5) -- (3,2) -- (2,2.5);
\draw (4,0.5) -- (1,2) -- (1,2.5);
\draw (3,0.5) -- (5,2) -- (5,2.5);
\draw (2,0.5) -- (4,2) -- (4,2.5);
\draw (1,0.5) -- (2,2) -- (3,2.5);
\draw (6,0) -- (6,-1.5);
\draw (3,0) -- (5,-1.5);
\draw (2,0) -- (4,-1.5);
\draw (5,0) -- (3,-1.5);
\draw (1,0) -- (2,-1.5);
\draw (4,0) -- (1,-1.5);
\end{scope}
\draw[decorate,decoration=brace,sharp corners] (0.6,0.5) -- node[left] {$\ss T_{w^{-1}}$} (0.6,2);
\draw[decorate,decoration=brace,sharp corners] (0.6,-1.5) -- node[left] {$\ss T_{w}$} (0.6,0);
\node[draw=black,fill=white,rectangle,minimum height=0.4cm,minimum width=2.1cm,inner sep=0pt] at (2,0.25) (a) {$a$};
\node[draw=black,fill=white,rectangle,minimum height=0.4cm,minimum width=2.1cm,inner sep=0pt] at (5,0.25) {$b$};
\end{tikzpicture}
+t^{-1}
\begin{tikzpicture}[rounded corners,scale=0.8,baseline=(a.base)]
\begin{scope}[every path/.style={draw=white,double=black,ultra thick, double distance=0.4pt}]
\draw (6,0.5) -- (6,2.5);
\draw (5,0.5) -- (2,2) -- (2.2,2.1); \draw (2.8,2.1) -- (2,2.5);
\draw (4,0.5) -- (1,2) -- (1,2.5);
\draw (3,0.5) -- (5,2) -- (5,2.5);
\draw (2,0.5) -- (4,2) -- (4,2.5);
\draw (1,0.5) -- (3,2) -- (2.8,2.1); \draw (2.2,2.1) -- (3,2.5);
\draw (6,0) -- (6,-1.5);
\draw (3,0) -- (5,-1.5);
\draw (2,0) -- (4,-1.5);
\draw (1,0) -- (3,-1.5);
\draw (5,0) -- (2,-1.5);
\draw (4,0) -- (1,-1.5);
\end{scope}
\draw[decorate,decoration=brace,sharp corners] (0.75,0.5) -- node[left] {$\ss T_{w^{-1}s_i}$} (0.75,2);
\draw[decorate,decoration=brace,sharp corners] (0.75,-1.5) -- node[left] {$\ss T_{s_iw}$} (0.75,0);
\node[draw=black,fill=white,rectangle,minimum height=0.4cm,minimum width=2.1cm,inner sep=0pt] at (2,0.25) (a) {$a$};
\node[draw=black,fill=white,rectangle,minimum height=0.4cm,minimum width=2.1cm,inner sep=0pt] at (5,0.25) {$b$};
\end{tikzpicture}
\\
=
\begin{tikzpicture}[rounded corners,scale=0.8,baseline=(a.base)]
\begin{scope}[every path/.style={draw=white,double=black,ultra thick, double distance=0.4pt}]
\draw (6,0.5) -- (6,2);
\draw (5,0.5) -- (2,2);
\draw (4,0.5) -- (1,2);
\draw (3,0.5) -- (5,2);
\draw (2,0.5) -- (4,2);
\draw (1,0.5) -- (3,2);
\draw (6,0) -- (6,-1.5);
\draw (3,0) -- (5,-1.5);
\draw (2,0) -- (4,-1.5);
\draw (5,0) -- (3,-1.5);
\draw (1,0) -- (2,-1.5);
\draw (4,0) -- (1,-1.5);
\end{scope}
\node[draw=black,fill=white,rectangle,minimum height=0.4cm,minimum width=2.1cm,inner sep=0pt] at (2,0.25) (a) {$a$};
\node[draw=black,fill=white,rectangle,minimum height=0.4cm,minimum width=2.1cm,inner sep=0pt] at (5,0.25) {$b$};
\end{tikzpicture}
+
\begin{tikzpicture}[rounded corners,scale=0.8,baseline=(a.base)]
\begin{scope}[every path/.style={draw=white,double=black,ultra thick, double distance=0.4pt}]
\draw (6,0.5) -- (6,2);
\draw (5,0.5) -- (3,2);
\draw (4,0.5) -- (1,2);
\draw (3,0.5) -- (5,2);
\draw (2,0.5) -- (4,2);
\draw (1,0.5) -- (2,2);
\draw (6,0) -- (6,-1.5);
\draw (3,0) -- (5,-1.5);
\draw (2,0) -- (4,-1.5);
\draw (1,0) -- (3,-1.5);
\draw (5,0) -- (2,-1.5);
\draw (4,0) -- (1,-1.5);
\end{scope}
\node[draw=black,fill=white,rectangle,minimum height=0.4cm,minimum width=2.1cm,inner sep=0pt] at (2,0.25) (a) {$a$};
\node[draw=black,fill=white,rectangle,minimum height=0.4cm,minimum width=2.1cm,inner sep=0pt] at (5,0.25) {$b$};
\end{tikzpicture}
+(1-t^{-1})
\begin{tikzpicture}[rounded corners,scale=0.8,baseline=(a.base)]
\begin{scope}[every path/.style={draw=white,double=black,ultra thick, double distance=0.4pt}]
\draw (6,0.5) -- (6,2);
\draw (5,0.5) -- (2,2);
\draw (4,0.5) -- (1,2);
\draw (3,0.5) -- (5,2);
\draw (2,0.5) -- (4,2);
\draw (1,0.5) -- (3,2);
\draw (6,0) -- (6,-1.5);
\draw (3,0) -- (5,-1.5);
\draw (2,0) -- (4,-1.5);
\draw (1,0) -- (3,-1.5);
\draw (5,0) -- (2,-1.5);
\draw (4,0) -- (1,-1.5);
\end{scope}
\node[draw=black,fill=white,rectangle,minimum height=0.4cm,minimum width=2.1cm,inner sep=0pt] at (2,0.25) (a) {$a$};
\node[draw=black,fill=white,rectangle,minimum height=0.4cm,minimum width=2.1cm,inner sep=0pt] at (5,0.25) {$b$};
\end{tikzpicture}
\\
=T_w(a\otimes b)T_{w^{-1}s_i}
+T_{s_iw}(a\otimes b)T_{w^{-1}}
+(1-t^{-1})T_{s_iw}(a\otimes b)T_{w^{-1}s_i}
\end{multline*}
Clearly, the same result is obtained by multiplying by $T_i$ on the left.
\end{itemize}
By multiplying by $t^{-|w|}$ and summing over $w$, we obtain the desired commutation property for the right hand side of \eqref{eq:defZstar}.
\end{proof}

We define an algebra morphism $\Psi$ from $\Lambda=\FF[p_1,p_2,\ldots]$ to $\Z$ as follows.
Define Jucys--Murphy elements in $\H_n$ (see e.g.~\cite{Ram-seminormal}) to be
\[
J_{j,n} = \sum_{i=1}^{j-1} t^{i-j+1} T_i T_{i+1}\ldots T_{j-1} T_{j-2} \ldots T_i,
\qquad j=2,\ldots,n
\]
They form a commutative subalgebra of $\H_n$, and it is well-known that symmetric polynomials in the $J_{j,n}$ are central. Then $\Psi$ sends $p_n$ to the element \rem[gray]{which is just a $t$-deformation of ($n\times$) the sum of cycles}
\[
\Psi(p_n)=[n]_{t^{-1}}\prod_{j=2}^n J_{j,n}
\]
where $[n]_{t^{-1}}=1+\cdots+t^{-(n-1)}$.
\rem[gray]{this agrees with Frobenius map at $t=1$ up to a conventional $n!$ which is related to the choice of isomorphism between class functions and center.}

\begin{prop}[\cite{Wan-Frob}]
$\Psi$ is a graded algebra isomorphism from $\Lambda$ to $\Z$.
\end{prop}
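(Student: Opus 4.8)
The plan is representation-theoretic. Over $\FF=\QQ(q,t)$ the algebra $\H_n$ is split semisimple, its simple modules $V_\mu$ being indexed by the partitions $\mu\vdash n$; hence the central-character map
\[
\omega\colon \Z(\H_n)\to\FF^{\,p(n)},\qquad z\mapsto(\omega_\mu(z))_{\mu\vdash n},
\]
where $z$ acts on $V_\mu$ by the scalar $\omega_\mu(z)$, is an isomorphism of commutative algebras, and $\dim_\FF\Z(\H_n)=p(n)=\dim_\FF\Lambda_n$. Because $\Psi$ preserves the grading, it is then enough to prove (i) that the prescription $p_r\mapsto\Psi(p_r)$ extends to an algebra morphism --- equivalently that the $\Psi(p_r)$ pairwise commute for $*$, which is not yet available since $\Z$ has not been shown commutative --- and (ii) that the resulting linear map $\Lambda_n\to\Z(\H_n)$ is a bijection for each $n$.

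First I would record the central characters of the generators. Since the $J_{j,n}$ commute, $\prod_{j=2}^n J_{j,n}$ is their top elementary symmetric polynomial and is therefore central; in the seminormal realization of $V_\mu$ it acts on every standard-tableau basis vector by one and the same scalar $d_\mu(t)$, depending only on $\mu$ (and computable from the contents of $\mu$ through the Jucys--Murphy eigenvalues). Thus $\omega_\mu(\Psi(p_r))=[r]_{t^{-1}}\,d_\mu(t)$ for $\mu\vdash r$, and, once (i) is known, the values $\omega_\mu(\Psi(p_\lambda))$ for arbitrary $\lambda\vdash n$ are determined by the multiplicativity of $\omega$ under $*$ established next.

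The heart of the argument is to show that $*$ is, up to the weight $t^{-|w|}$, the induction product for $\H_k\otimes\H_\ell\subset\H_{k+\ell}$, and to compute the central character of a shuffle product. For $x\in\Z(\H_k)$ and $y\in\Z(\H_\ell)$ I would evaluate $\omega_\mu(x*y)$ by pairing $x*y$ with the primitive central idempotent $e_\mu$ through the Markov trace $\tau$, expanding $\tau$ over irreducible characters and Schur elements, and using the branching rule
\[
\mathrm{Res}^{\H_{k+\ell}}_{\H_k\otimes\H_\ell} V_\mu\ \cong\ \bigoplus_{\lambda,\nu} (V_\lambda\otimes V_\nu)^{\oplus c^\mu_{\lambda\nu}},
\]
with $c^\mu_{\lambda\nu}$ the Littlewood--Richardson coefficients. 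This should yield an expression of the form
\[
\omega_\mu(x*y)=\sum_{\lambda\vdash k,\ \nu\vdash\ell} c^\mu_{\lambda\nu}\,W^\mu_{\lambda\nu}(t)\,\omega_\lambda(x)\,\omega_\nu(y),
\]
in which the weight $W^\mu_{\lambda\nu}(t)$ (a ratio of Schur elements dressed by the normalization $t^{-|w|}$) is symmetric under $(\lambda,x)\leftrightarrow(\nu,y)$; since $c^\mu_{\lambda\nu}=c^\mu_{\nu\lambda}$ as well, $\omega_\mu(x*y)=\omega_\mu(y*x)$ for all $\mu$. By injectivity of $\omega$ this gives $x*y=y*x$: the algebra $\Z$ is commutative, $\Psi$ is well defined, and $\omega\circ\Psi$ converts $*$ into multiplication, so that $\Psi$ is an algebra morphism intertwining the product of $\Lambda$ with $*$.

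It remains to prove (ii), for which I would specialize $t\to1$. There $\H_n$ becomes $\FF[\mathcal S_n]$ with $T_i\mapsto s_i$, each $J_{j,n}$ becomes the classical Jucys--Murphy element $\sum_{i<j}(i\,j)$, so $\prod_{j=2}^n J_{j,n}$ becomes the sum of all $n$-cycles (as one checks directly), and $[n]_{t^{-1}}\to n$; moreover $*$ becomes the ordinary induction product. Hence $\Psi|_{t=1}$ is precisely the (inverse) Frobenius characteristic map of the symmetric group, up to the rescaling automorphisms of $\Lambda$ that we have agreed to disregard, and is in particular bijective in each degree --- equivalently the transition matrix $\big(\omega_\mu(\Psi(p_\lambda))\big)_{\mu,\lambda\vdash n}$, essentially the character table of $\mathcal S_n$, is nonsingular at $t=1$. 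Being a rational function of $t$ that does not vanish at $t=1$, its determinant is not identically zero, so the matrix is invertible over $\FF$ and $\Psi$ is bijective in every degree, completing the proof. The principal obstacle is the central-character computation above: because $T_wT_{w^{-1}}\neq1$ the shuffle sum is not a conjugation, and one must track the interplay of the weights $t^{-|w|}$ with the Markov trace and the Schur elements carefully enough both to see the symmetry $W^\mu_{\lambda\nu}=W^\mu_{\nu\lambda}$ and to match the classical limit.
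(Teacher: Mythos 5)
The paper contains no proof of this proposition at all: it is imported wholesale from \cite{Wan-Frob}, the only related ingredients proved in the text being the well-definedness of $*$ (Proposition~\ref{prop:Zshuffle}) and, \emph{as a corollary of the citation}, the commutativity of $\Z$. Your representation-theoretic plan is therefore a genuinely different, self-contained route, and it is viable: its pivot, the central-character formula for a shuffle product, does hold and in a cleaner form than you anticipate. The natural pairing is not the Markov trace but the canonical symmetrizing form $\left<T_w\right>=\delta_{w,1}$ already used in \S\ref{sec:exch} (for which $\left<T_uT_v\right>=t^{|u|}\delta_{v,u^{-1}}$); with respect to it, $\{T_w\}_{w\in\mathcal{S}^{k,\ell}}$ and $\{t^{-|w|}T_{w^{-1}}\}$ are dual families modulo the parabolic $\H_k\otimes\H_\ell$, so $x*y$ is precisely the relative-norm (transfer) map applied to $x\otimes y$, adjoint to restriction of class functions; Frobenius reciprocity plus the generic branching rule (Littlewood--Richardson coefficients, by the Tits deformation argument) then yield $\omega_\mu(x*y)=\mathbf{s}_\mu(t)\sum_{\lambda,\nu}c^\mu_{\lambda\nu}\,\omega_\lambda(x)\,\omega_\nu(y)/\bigl(\mathbf{s}_\lambda(t)\mathbf{s}_\nu(t)\bigr)$, where $\mathbf{s}_\mu(t)$ are the Schur elements, so your weight is $\mathbf{s}_\mu/(\mathbf{s}_\lambda\mathbf{s}_\nu)$ and the symmetry $W^\mu_{\lambda\nu}=W^\mu_{\nu\lambda}$ is automatic rather than something to be tracked (sanity check at $k=\ell=1$: $1*1=2+t^{-1}(t-1)T_1$ has $\omega_{(2)}=1+t=\mathbf{s}_{(2)}$ and $\omega_{(1,1)}=1+t^{-1}=\mathbf{s}_{(1,1)}$). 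Two points need care in your write-up: you must quote Proposition~\ref{prop:Zshuffle} for the centrality of $x*y$ \emph{before} appealing to injectivity of $\omega$, since characters separate elements only modulo $[\Hn,\Hn]$; and your $t\to1$ nondegeneracy argument is legitimate because the entries $\omega_\mu(\Psi(p_\lambda))$ are Laurent polynomials in $t$, regular at $t=1$, where $\prod_{j=2}^nJ_{j,n}$ degenerates to the sum of $n$-cycles and $*$ to the induction product, making the matrix a row/column rescaling of the character table of $\mathcal S_n$. What your route buys over the paper's citation is that commutativity of $\Z$ comes out as a by-product, via the characteristic map $z\mapsto\sum_\mu\omega_\mu(z)\,\mathbf{s}_\mu(t)^{-1}s_\mu$ (with $s_\mu$ the Schur function), which is an algebra isomorphism $\Z\to\Lambda$ in its own right; what it does not yet contain is the actual execution of the transfer computation, which you correctly flag as the main obstacle, so as it stands the argument is a sound and completable outline rather than a finished proof.
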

Its inverse is the $\Phi$ of \S \ref{sec:intro-center}.
As a corollary, $\Z$ is commutative.

Denote by
\begin{align}
\tilde h_n &= \Phi(1_n)
\\
\tilde e_n &= \Phi(t^{-\frac{n(n-1)}{2}}T_{w_0}^2)
\\
h_n &= \Phi(S_n)
\\\label{eq:PhiAn}
e_n &= \Phi(A_n)
\end{align}
where $1_n$ is the identity of $\H_n$, and $w_0$ the longest element of $\mathcal S_n$. We can identify these symmetric functions.
Given $u\in \FF$, define $\sigma_u$ to be the automorphism of $\Lambda$
\begin{equation}\label{eq:defsigma}
\sigma_u:\ p_r \mapsto \frac{(1-u)^r}{1-u^r}\,p_r
\end{equation}
\begin{prop}[\cite{Lascoux-Frob}]\label{prop:identsym}
$h_n$ (resp.\ $e_n)$ is the complete (resp.\ elementary) symmetric function of degree $n$.

Furthermore, $\tilde h_n=\sigma_t(h_n)$ and $\tilde e_n=\sigma_t(e_n)$.
\end{prop}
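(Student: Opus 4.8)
The plan is to argue representation-theoretically. Over $\FF$ the algebra $\H_n$ is split semisimple, with irreducibles $V_\lambda$ ($\lambda\vdash n$) and primitive central idempotents $E_\lambda$; a central element $c$ is determined by its scalar eigenvalues $\omega_\lambda(c)$ on the $V_\lambda$, and $c=\sum_\lambda\omega_\lambda(c)E_\lambda$. The one input I would isolate is the explicit value of the Frobenius characteristic on idempotents,
\[
\Phi(E_\lambda)=a_\lambda\,s_\lambda,\qquad a_\lambda=\frac{t^{n(\lambda)}}{\prod_{b\in\lambda}[h(b)]_t}=(1-t)^n\,s_\lambda[1,t,t^2,\dots],
\]
with $s_\lambda$ the Schur function, $[k]_t=1+\dots+t^{k-1}$, $h(b)$ the hook length of a box $b$, and $n(\lambda)=\sum_i(i-1)\lambda_i$. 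Granting this and linearity of $\Phi$, one has $\Phi(c)=\sum_{\lambda\vdash n}\omega_\lambda(c)\,a_\lambda\,s_\lambda$, and all four identifications reduce to computing eigenvalues and summing two Cauchy kernels.

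First I would compute the relevant eigenvalues. From Lemma~\ref{lem:basic} one gets $S_nT_i=tS_n$ and $A_nT_i=-A_n$, so $S_n$ and $A_n$ are supported on the trivial and sign representations $V_{(n)}$, $V_{(1^n)}$ (where $T_i$ acts by $t$, resp.\ $-1$); evaluating the defining sums on these one-dimensional characters gives $S_n=P_n(t)E_{(n)}$ and $A_n=P_n(t^{-1})E_{(1^n)}$, where $P_n(t)=\prod_{k=1}^n[k]_t=\sum_{w\in\mathcal S_n}t^{|w|}$. Because $a_{(n)}=1/P_n(t)$ and $a_{(1^n)}=1/P_n(t^{-1})$, while $s_{(n)}$ and $s_{(1^n)}$ are the complete and elementary symmetric functions, this gives immediately $\Phi(S_n)=s_{(n)}$ and $\Phi(A_n)=s_{(1^n)}$, the first assertion. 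For the remaining two I note $\omega_\lambda(1_n)=1$ for all $\lambda$, and that $T_{w_0}^2$ is central (conjugation by $w_0$ is an involutive automorphism fixing each $V_\lambda$); using the Jucys--Murphy elements together with the trivial/sign values I would check that it acts on $V_\lambda$ by $t^{\binom n2+\sum_{b}c(b)}$ with $c(b)$ the content, so that $t^{-\binom n2}\omega_\lambda(T_{w_0}^2)=t^{\,n(\lambda')-n(\lambda)}$.

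It then remains to recognise the two sums. With $a_\lambda=(1-t)^n s_\lambda[1,t,t^2,\dots]$ the Cauchy identity yields
\[
\Phi(1_n)=(1-t)^n\sum_{\lambda\vdash n}s_\lambda[1,t,\dots]\,s_\lambda=(1-t)^n\Big[\exp\sum_{r\ge1}\tfrac{p_r}{r(1-t^r)}\Big]_{\deg n}.
\]
Since $\sigma_t(h_n)=\big[\exp\sum_r\tfrac{(1-t)^r p_r}{r(1-t^r)}\big]_{\deg n}$ and the degree-$n$ component of any series in the $p_r$ (with $\deg p_r=r$) is multiplied by exactly $(1-t)^n$ under $p_r\mapsto(1-t)^rp_r$, the two coincide, giving $\tilde h_n=\sigma_t(h_n)$. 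For $\tilde e_n$ the factor $t^{n(\lambda')-n(\lambda)}$ converts $s_\lambda[1,t,\dots]$ into $s_{\lambda'}[1,t,\dots]$ (hook lengths are transpose-invariant), so the dual Cauchy identity $\sum_\lambda s_\lambda[1,t,\dots]\,s_{\lambda'}=\exp\sum_r\tfrac{(-1)^{r-1}p_r}{r(1-t^r)}$ together with the same homogeneity argument gives $\Phi(t^{-\binom n2}T_{w_0}^2)=\sigma_t(e_n)$.

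The crux, and the main obstacle, is the displayed formula of the first paragraph: the normalized Frobenius characteristic $\Phi(E_\lambda)=a_\lambda s_\lambda$. For $\lambda=(n),(1^n)$ it is forced by the trivial/sign computations, but for general $\lambda$ it amounts to the full central character table $\chi_\lambda(\mu)=\omega_\lambda(\Psi(p_\mu))$ satisfying $\chi_\lambda(\mu)=\chi^\lambda_\mu/a_\lambda$ (with $\chi^\lambda_\mu$ the symmetric-group character), which in turn rests on the eigenvalues of symmetric polynomials in the $J_{j,n}$ being the $t$-contents of standard tableaux; this is the ingredient I would import from \cite{Wan-Frob,Lascoux-Frob}. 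A self-contained alternative for the first assertion would be to prove the $\sigma_t$-twisted Newton recursions $n\,S_n=\sum_r\Psi(p_r)*S_{n-r}$ and its signed analogue directly in $\Z$, but evaluating those shuffle products runs into the same Pieri/Murnaghan--Nakayama combinatorics.
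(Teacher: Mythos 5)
The paper offers no internal proof to compare against: Proposition~\ref{prop:identsym} is imported wholesale from \cite{Lascoux-Frob} (with the isomorphism $\Psi$ itself taken from \cite{Wan-Frob}). Your proposal is the standard representation-theoretic derivation that underlies those references, and its computations check out. Lemma~\ref{lem:basic} does give $S_nT_i=tS_n$ and $A_nT_i=-A_n$, whence $S_n=[n]_t!\,E_{(n)}$ and $A_n=[n]_{t^{-1}}!\,E_{(1^n)}$; the full twist satisfies $T_{w_0}^2=\prod_{j=1}^n L_j$ with the multiplicative Jucys--Murphy elements acting by $t^{c(b_j)+j-1}$, so it acts on $V_\lambda$ by $t^{\binom{n}{2}+\sum_b c(b)}$ and $\sum_b c(b)=n(\lambda')-n(\lambda)$, as you assert; the hook bookkeeping $a_{(n)}=1/[n]_t!$, $a_{(1^n)}=t^{\binom{n}{2}}/[n]_t!$ and $t^{n(\lambda')-n(\lambda)}s_\lambda[1,t,\dots]=s_{\lambda'}[1,t,\dots]$ is right (hooks are transpose-invariant); and the $(1-t)^n$ homogeneity argument correctly turns the Cauchy and dual Cauchy kernels into $\sigma_t(h_n)$ and $\sigma_t(e_n)$ under the paper's normalization \eqref{eq:defsigma}. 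The one caveat, which you state honestly, is that the pivot $\Phi(E_\lambda)=a_\lambda s_\lambda$ is assumed, not proved, and it is essentially equivalent to the full $t$-deformed Frobenius/central-character formula --- i.e., to the content of the very references being cited; so your text is a correct and clean reduction of all four identities to that single input rather than an independent proof. One small sharpening: even the first assertion ($\Phi(S_n)=h_n$, $\Phi(A_n)=e_n$) is not literally ``forced'' by the trivial/sign eigenvalues of $S_n$ and $A_n$ alone --- one must still evaluate $\Phi$ on these central elements, which requires the central characters $\omega_{(n)}(\Psi(p_\mu))$ and $\omega_{(1^n)}(\Psi(p_\mu))$ for all $\mu$; these are, however, explicitly computable from the Jucys--Murphy definition of $\Psi$ (on the trivial representation $J_{j,n}\mapsto t+\cdots+t^{j-1}$, and the shuffle product contributes a $t$-binomial), which is presumably what your parenthetical intends. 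With that understanding, your derivation is a legitimate and complete route to the proposition, organized around exactly the ingredient the paper outsources.
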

\rem[gray]{is there an elementary proof?}

We also denote by $\sigma_\infty:\ p_r\mapsto (-1)^{r-1}p_r$.
Note $\sigma_{u^{-1}}=\sigma_\infty\sigma_u$. It is well-known that $\sigma_\infty$ exchanges $h_n$ and $e_n$ (and similarly for $\tilde h_n$ and $\tilde e_n$).

\rem[gray]{note a subtlety: at $t=1$, $1_n=\sum_\lambda \frac{1}{hook(\lambda)}s_\lambda$ and so $\tilde h_n=p_1^n$. not so at $t\ne 1$ because hooks get $t$-deformed! this is one way of understanding why the mdeg formula is easier than the K-poly formula. by now this is spelled out in \S \ref{sec:commut}}

\subsection{Partition functions from the Hecke algebra}\label{sec:Pf_Hecke}
\newcommand\M{\mathcal M}
We now work inside the algebra $\H_{2n}$. There are two obvious embeddings of $\H_n$ into $\H_{2n}$,
given by $T_i\mapsto T_i$ and $T_i\mapsto T_{i+n}$ respectively, and we denote the corresponding subalgebras by
$\H_n^{(1)}$ and $\H_n^{(2)}$.
Similarly, denote by $\mathcal S^{(1)}_n$ and $\mathcal S^{(2)}_n$ the two subgroups of $\mathcal S_{2n}$ (both isomorphic to $\mathcal S_n$) acting nontrivially on $\{1,\ldots,n\}$ (resp.\ $\{n+1,\ldots,2n\}$) only.

Also define the {\em $R$-matrix}
\begin{equation}\label{eq:defR}
\check R_i(u)=
1-t + (1-u) T_i
\qquad
i=1,\ldots,2n-1
\end{equation}
as an element of $\H_{2n}[u^\pm]$.

Extending the diagrammatic calculus introduced in the proof of Proposition~\ref{prop:Zshuffle}, we depict $R$-matrices as (flat) crossings of two lines carrying variables (so-called spectral parameters), the parameter of the $R$-matrix being the ratio of spectral parameters:
\[
\check R_i(u) = 
\begin{tikzpicture}[baseline=0.5cm]
\draw[invarrow] (0,0) -- (0,1);
\node at (1,0.5) {$\cdots$};
\draw[invarrow] (2,0) -- (2,1);
\draw[invarrow=0.75] (3,0) -- node[pos=0.75,right] {$\ss u''$} (4,1);
\draw[invarrow=0.75] (4,0) -- node[pos=0.75,left] {$\ss u'$} (3,1);
\draw[invarrow] (5,0) -- (5,1);
\node at (6,0.5) {$\cdots$};
\draw[invarrow] (7,0) -- (7,1);
\draw[decorate,decoration=brace] (2,-0.2) -- node[below] {$i-1$} (0,-0.2);
\draw[decorate,decoration=brace] (7,-0.2) -- node[below] {$2n-i-1$} (5,-0.2);
\end{tikzpicture}
\qquad
u=u''/u'
\]
so that \eqref{eq:defR} can be expressed as:
\[
\begin{tikzpicture}[baseline=0.5cm]
\draw[invarrow=0.75] (3,0)-- node[pos=0.75,right] {$\ss u''$} (4,1);
\draw[invarrow=0.75] (4,0) -- node[pos=0.75,left] {$\ss u'$} (3,1);
\end{tikzpicture}
=
-
\begin{tikzpicture}[baseline=0.5cm]
\draw[invarrow=0.75] (3,0) --  (3.4,0.4) (3.6,0.6) -- (4,1);
\draw[invarrow=0.75] (4,0) -- (3,1);
\end{tikzpicture}
-u
\begin{tikzpicture}[baseline=0.5cm]
\draw[invarrow=0.75] (3,0) --  (4,1);
\draw[invarrow=0.75] (4,0) -- (3.6,0.4) (3.4,0.6) -- (3,1);
\end{tikzpicture}
\qquad
u=u''/u'
\]
\rem[gray]{would be nicer if I changed the sign of $\check R_i$. oh well.}

The {\em Yang--Baxter equation} is known to hold:
\begin{align}\label{eq:ybe}
\check R_i(u) \check R_{i+1}(uv)\check R_i(v) &= \check R_{i+1}(v)\check R_i(uv) \check R_{i+1}(u)
\qquad
i=1,\ldots,2n-2
\\[1mm]\notag
\begin{tikzpicture}[baseline=1.5cm,rounded corners,yscale=0.75]
\draw[invarrow=0.9] (1,0) -- (3,2) -- (3,3);
\draw[invarrow=0.9] (2,0) -- (1,1) -- (1,2) -- (2,3);
\draw[invarrow=0.9] (3,0) -- (3,1) -- (1,3);
\end{tikzpicture}
&=
\begin{tikzpicture}[baseline=1.5cm,rounded corners,yscale=0.75]
\draw[invarrow=0.9] (1,0) -- (1,1) -- (3,3);
\draw[invarrow=0.9] (2,0) -- (3,1) -- (3,2) -- (2,3);
\draw[invarrow=0.9] (3,0) -- (1,2) -- (1,3);
\end{tikzpicture}
\end{align}
as an identity in $\H_{2n}[u^\pm,v^\pm]$, as well as the {\em unitarity equation}
\begin{align}\label{eq:unit}
\check R_{i}(u^{-1})\check R_i(u)
&=(1-tu)(1-tu^{-1})
\\\notag
\begin{tikzpicture}[baseline=1cm,rounded corners]
\draw[invarrow=0.9] (1,0) -- (2,1) -- node[left,pos=0.6] {$\ss u'$} (1,2); \draw[invarrow=0.9] (2,0) -- (1,1) -- node[right,pos=0.6] {$\ss u''$} (2,2); \end{tikzpicture}
&=
(1-tu)(1-tu^{-1})
\begin{tikzpicture}[baseline=1cm,rounded corners]
\draw[invarrow=0.8] (1,0) -- node[left,pos=0.75] {$\ss u'$} (1,2);
\draw[invarrow=0.8] (2,0) -- node[right,pos=0.75] {$\ss u''$} (2,2);
\end{tikzpicture}
\qquad
u=u''/u'
\end{align}

The central object of study of this section is the ``formal partition function'', which is the following element of $\H_{2n}[x_1^\pm,\ldots,x_n^\pm,y_1^\pm,\ldots,y_n^\pm]$:
\begin{align}\label{eq:defZ}
Z(x_1,\ldots,x_n,y_1,\ldots,y_n)
&=\check R_n(x_n/y_1)\check R_{n-1}(x_{n-1}/y_1)\check R_{n+1}(x_n/y_2)
\ldots
\check R_n(x_1/y_n)
\\
&=\notag
\begin{tikzpicture}[baseline=(current  bounding  box.center)]
\foreach\j/\lab in {1/y_1,2/y_2,3/\cdots,4/y_n}
\draw[invarrow=0.95] (\j,0.5) -- node[pos=0.95,left] {$\ss \lab$} (\j,4.5);
\foreach\i/\lab in {4/x_1,3/x_2,2/\vdots,1/x_n}
\draw[invarrow=0.95] (0.5,\i) -- node[pos=0.95,below] {$\ss \lab$} (4.5,\i);
\end{tikzpicture}
\end{align}
We also write $Z = Z(x_1,\ldots,x_n,y_1,\ldots,y_n)$ when there is no need to emphasize the dependence on the variables.

$Z$ has an expansion of the form
\begin{equation}\label{eq:exp}
Z = \sum_{w\in\mathcal{S}_{2n}} Z_w T_w
\end{equation}
where each 
$Z_w \in \ZZ[t^\pm,x_1^\pm,\ldots,x_n^\pm,y_1^\pm,\ldots,y_n^\pm]$.

\subsection{Exchange relation and symmetry}\label{sec:exch}
Consider the operator $\tau_i$ (resp.\ $\tau'_j$) that permutes variables $x_i$ and $x_{i+1}$
(resp.\ $y_j$ and $y_{j+1}$), $i,j=1,\ldots,n-1$.

We have the following fundamental lemma:
\begin{lem}\label{lem:exch}
$Z$ satisfies the {\em exchange relations}:
\begin{align}\label{eq:exch-x}
(\tau_i Z) \check R_{n+i}(x_{i+1}/x_i)
&=
\check R_i(x_{i+1}/x_i) Z
\qquad i=1,\ldots,n-1
\\\notag
\begin{tikzpicture}[rounded corners,baseline=(current  bounding  box.center)]
\foreach\j/\lab in {1/y_1,2/y_2,3/\cdots,4/y_n}
\draw[invarrow=0.95] (\j,0.5) -- node[pos=0.95,left] {$\ss \lab$} (\j,4.5);
\foreach\i/\lab in {4/x_1,1/x_n}
\draw[invarrow=0.95] (0.5,\i) -- node[pos=0.95,below] {$\ss \lab$} (4.5,\i);
\node at (4.25,3.5) {$\ss\vdots$};
\node at (4.25,1.5) {$\ss\vdots$};
\draw[invarrow=0.95] (0.5,3) -- (4.5,3) -- node[pos=0.7,below=1mm] {$\ss x_{i+1}$} (5.5,2);
\draw[invarrow=0.95] (0.5,2) -- (4.5,2) -- node[pos=0.7,above=1mm] {$\ss x_{i}$} (5.5,3);
\end{tikzpicture}
&=
\begin{tikzpicture}[rounded corners,baseline=(current  bounding  box.center)]
\foreach\j/\lab in {1/y_1,2/y_2,3/\cdots,4/y_n}
\draw[invarrow=0.95] (\j,0.5) -- node[pos=0.95,left] {$\ss \lab$} (\j,4.5);
\foreach\i/\lab in {4/x_1,1/x_n}
\draw[invarrow=0.95] (0.5,\i) -- node[pos=0.95,below] {$\ss \lab$} (4.5,\i);
\node at (4.25,3.5) {$\ss\vdots$};
\node at (4.25,1.5) {$\ss\vdots$};
\draw[invarrow=0.97] (-0.5,2) -- (0.5,3) -- node[pos=0.95,below] {$\ss x_{i}$} (4.5,3); 
\draw[invarrow=0.97] (-0.5,3) -- (0.5,2) --  node[pos=0.95,below] {$\ss x_{i+1}$} (4.5,2);
\end{tikzpicture}
\end{align}
\begin{align}\label{eq:exch-y}
\check R_{n+j}(y_j/y_{j+1})
\tau'_j Z
&=
Z\check R_j(y_j/y_{j+1})
\qquad j=1,\ldots,n-1
\\\notag
\begin{tikzpicture}[rounded corners,baseline=2.5cm]
\foreach\j/\lab in {1/{y_1\rlap{\ $\ss\cdots$}},4/{\cdots\ y_n}}
\draw[invarrow=0.95] (\j,0.5) -- node[pos=0.95,left] {$\ss \lab$} (\j,4.5);
\draw[invarrow=0.95] (2,-0.5) -- (3,0.5) -- node[pos=0.95,left] {$\ss y_j$} (3,4.5);
\draw[invarrow=0.95] (3,-0.5) -- (2,0.5) -- node[pos=0.95,left] {$\ss y_{j+1}$} (2,4.5);
\foreach\i/\lab in {4/x_1,3/x_2,2/\vdots,1/x_n}
\draw[invarrow=0.95] (0.5,\i) -- node[pos=0.95,below] {$\ss \lab$} (4.5,\i);
\end{tikzpicture}
\qquad&=\qquad
\begin{tikzpicture}[rounded corners,baseline=2.5cm]
\foreach\j/\lab in {1/{y_1\rlap{\quad$\ss\cdots$}},4/{\cdots\ y_n}}
\draw[invarrow=0.95] (\j,0.5) -- node[pos=0.95,left] {$\ss \lab$} (\j,4.5);
\draw[invarrow=0.95] (2,0.5) -- (2,4.5) -- node[pos=0.75,right] {$\ss y_j$} (3,5.5);
\draw[invarrow=0.95] (3,0.5) -- (3,4.5) -- node[pos=0.75,left] {$\ss y_{j+1}$} (2,5.5);
\foreach\i/\lab in {4/x_1,3/x_2,2/\vdots,1/x_n}
\draw[invarrow=0.95] (0.5,\i) -- node[pos=0.95,below] {$\ss \lab$} (4.5,\i);
\end{tikzpicture}
\end{align}
\end{lem}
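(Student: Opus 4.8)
The plan is to prove both exchange relations by the standard ``train argument'': I introduce the extra $R$-matrix on one boundary of the lattice and transport it across to the opposite boundary, one vertex at a time, using only the Yang--Baxter equation \eqref{eq:ybe} together with the fact that $\check R$-matrices acting on disjoint pairs of strands commute (a consequence of the far-commutation relation $T_aT_b=T_bT_a$ for $|a-b|\ge 2$). I will carry out the argument for \eqref{eq:exch-x}; the proof of \eqref{eq:exch-y} is identical after reflecting the lattice across its diagonal, which interchanges the rows carrying the $x$'s with the columns carrying the $y$'s.

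First I would note that in $Z$ the two horizontal lines carrying $x_i$ and $x_{i+1}$ occupy adjacent strand positions throughout the lattice, since horizontal lines never alter their relative order. Starting from the right-hand side $\check R_i(x_{i+1}/x_i)\,Z$, the factor $\check R_i(x_{i+1}/x_i)$ is exactly the crossing of these two lines placed at their left (incoming) ends. Using the commutativity of disjoint $\check R$-matrices I bring this crossing next to the two vertices at which $x_i$ and $x_{i+1}$ first meet the leftmost vertical line $y_1$; the three lines $x_i$, $x_{i+1}$, $y_1$ then form precisely the left-hand configuration of \eqref{eq:ybe}, the middle parameter being the product $(x_{i+1}/y_1)(y_1/x_i)=x_{i+1}/x_i$ of the two outer ones, which matches the crossing. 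A single application of \eqref{eq:ybe} slides the $x_i$--$x_{i+1}$ crossing past the line $y_1$, and to the right of the crossing the spectral parameters on the two rows are now interchanged.

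Iterating this move across the $n$ vertical lines $y_1,\dots,y_n$ --- one application of \eqref{eq:ybe} per column, interspersed with free commutations to align the relevant factors --- transports the crossing to the right (outgoing) ends of the two rows. In the process it migrates from strand positions $i,i+1$ to positions $n+i,n+i+1$, i.e.\ it becomes $\check R_{n+i}(x_{i+1}/x_i)$. Because each elementary move swaps the two rows locally, the cumulative effect on the lattice strictly between the incoming and outgoing crossings is to interchange $x_i$ and $x_{i+1}$ everywhere, which is exactly $\tau_i Z$. This yields $\check R_i(x_{i+1}/x_i)\,Z=(\tau_i Z)\,\check R_{n+i}(x_{i+1}/x_i)$, as claimed; note that unitarity \eqref{eq:unit} is not needed here.

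The routine parts are the repeated invocations of \eqref{eq:ybe} and the accompanying commutations, which I would suppress inside a single chain of diagrams. \textbf{The main point requiring care} is the bookkeeping of strand positions and spectral parameters: one must verify that after $n$ sweeps the transported crossing lands on exactly positions $n+i,n+i+1$ with parameter $x_{i+1}/x_i$ (and not its inverse), and that the residual lattice is literally $\tau_i Z$ rather than $Z$ decorated by some spurious permutation. This is cleanest to confirm by drawing the intermediate lattices, as in the figures accompanying \eqref{eq:exch-x}, in which each step is manifestly an instance of the Yang--Baxter equation.
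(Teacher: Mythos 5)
Your proposal is correct and is essentially the paper's own argument: the paper proves Lemma~\ref{lem:exch} precisely by repeated application of the Yang--Baxter equation \eqref{eq:ybe} (its proof is a single sentence saying so), and your train argument, with the correct observation that the middle parameter $(x_{i+1}/y_j)(y_j/x_i)=x_{i+1}/x_i$ matches the transported crossing and that unitarity \eqref{eq:unit} is not needed, just spells out the same sweep in detail. The parameter and strand-position bookkeeping you flag as the delicate point checks out, including the reflection argument reducing \eqref{eq:exch-y} to \eqref{eq:exch-x}.
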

\begin{proof}
Both equalities result from repeated application of the Yang--Baxter equation \eqref{eq:ybe}. 
\end{proof}

We now consider the specialization $y_i=qx_i$, $i=1,\ldots,n$.
We first note
\begin{lem}\label{lem:deg}
$Z_w|_{y_i=qx_i}\in \FF[x_1^\pm,\ldots,x_n^\pm]_0$.
\end{lem}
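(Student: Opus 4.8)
The plan is to read off the $x$-content of each coefficient $Z_w$ directly from the product of $R$-matrices defining $Z$ in \eqref{eq:defZ}. First I would expand each factor $\check R_k(x_i/y_j)=(1-t)\cdot 1+(1-x_i/y_j)T_k$ and distribute the (noncommutative) product: every resulting term is a scalar — a product of spectral factors, one per crossing — times an ordered word in the $T_k$'s, namely those crossings at which the $T$-part was selected. Reducing that word to the standard basis via Lemma~\ref{lem:basic} only ever produces coefficients in $\ZZ[t^\pm]$ and introduces no dependence on the spectral parameters. Hence the entire $x$- and $y$-dependence of $Z_w$ is carried by the scalar spectral factors, and it suffices to bound the exponents arising there.

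To make this precise I would expand each selected factor $(1-x_i/y_j)$ into its two monomials and encode a choice by a $0$–$1$ matrix $(c_{ij})_{1\le i,j\le n}$, setting $c_{ij}=1$ exactly when the monomial $-x_i/y_j$ (rather than the constant $1$) is picked at the crossing of the $x_i$- and $y_j$-lines; since in $Z$ each such crossing occurs exactly once, $c$ is a genuine matrix indexed by pairs $(i,j)$. After the specialization $y_j=qx_j$, picking $-x_i/y_j=-q^{-1}x_ix_j^{-1}$ contributes $+1$ to the exponent of $x_i$ and $-1$ to that of $x_j$, while every other choice contributes a factor of $x$-degree $0$. Thus each monomial occurring in $Z_w|_{y_i=qx_i}$ has $x$-exponent vector $(i_1,\dots,i_n)$ with
\[
i_m=\sum_{b=1}^n c_{mb}-\sum_{a=1}^n c_{am},\qquad m=1,\dots,n,
\]
and in particular is homogeneous of degree $0$ (summing over all $m$, every entry of $c$ cancels).

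It then remains to verify the partial-sum bounds of \eqref{eq:degbounds}. Summing the displayed formula over $m=1,\dots,r$ and writing $S=\{1,\dots,r\}$, the contributions of entries $c_{ab}$ with both $a,b\in S$ cancel between the two double sums, leaving
\[
\sum_{m=1}^r i_m=\sum_{a\le r<b} c_{ab}-\sum_{b\le r<a} c_{ab}=:P-Q.
\]
There are exactly $r(n-r)$ crossings with row index $\le r$ and column index $>r$, and likewise $r(n-r)$ with row index $>r$ and column index $\le r$, so $0\le P\le r(n-r)$ and $0\le Q\le r(n-r)$; therefore $\bigl|\sum_{m=1}^r i_m\bigr|=|P-Q|\le r(n-r)$, which is precisely the defining condition of $\FF[x_1^\pm,\dots,x_n^\pm]_0$. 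The only point demanding care is the first step — confirming that the reduction to the standard basis is invisible to the $x$-grading and that each $(i,j)$-crossing appears exactly once, so that $c$ is well defined — after which the cancellation identity makes the bound immediate.
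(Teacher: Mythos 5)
Your proof is correct and follows essentially the same route as the paper: both arguments attribute all $x$-dependence of $Z_w$ to the choice of monomial ($1$ versus $-x_i/y_j$) at each of the $n^2$ crossings, and bound the prefix-sum degree $i_1+\cdots+i_r$ by noting that only the $r\times(n-r)$ and $(n-r)\times r$ off-diagonal blocks of crossings can contribute $\pm1$ each, giving $\bigl|\sum_{m=1}^r i_m\bigr|\le r(n-r)$. Your extra care in checking that the reduction of the $T$-word to the standard basis via Lemma~\ref{lem:basic} introduces no spectral-parameter dependence is left implicit in the paper but is the same underlying observation.
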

\begin{proof}
Recall that each of the $n^2$ factors of $Z$ corresponds to some vertex $(i,j)$ on the diagram of \eqref{eq:defZ}, and is a polynomial of degree $1$ in the parameter $x_i/y_j$, so after specialization of the Lemma, in $x_i/(qx_j)$. Now fix $r\in\{0,\ldots,n\}$ and consider the degree $i_1+\cdots+i_r$ in the first $r$ variables $x_1,\ldots,x_r$. 
Among the factors of $Z$, the ones corresponding in the diagram to the top-left $r\times r$ square have degree zero, and similarly for the bottom $(n-r)\times (n-r)$ square. The ones for the top-right $r\times(n-r)$ square can have monomials of degree $0$ or $1$, whereas for the bottom-left $(n-r)\times r$ square they can be of degree $0$ or $-1$. Summing these degrees, we find the desired inequality $|i_1+\cdots+i_r|\le r(n-r)$.
\end{proof}

Of course the same proof would work with any $r$-subset of variables,
suggesting that we should study the behaviour of $Z(x_1,\ldots,x_n,qx_1,\ldots,qx_n)$ under the interchange of the variables $x_i$. We have the following condition:
\begin{prop}\label{prop:charsym}
Let $\chi$ be a linear form on $\H_{2n}$. 
$\chi(Z(x_1,\ldots,x_n,qx_1,\ldots,qx_n))$ is a symmetric Laurent polynomial in the $x_i$s provided $\chi$ satisfies
\begin{equation}\label{eq:char}
\chi(ab)=\chi(ba)\qquad \forall a\in\H_{2n},\ b\in\H_n^{(1)}\otimes \H_n^{(2)}
\end{equation}
\end{prop}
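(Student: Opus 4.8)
The plan is to prove symmetry one transposition at a time: since the elementary swaps $x_i\leftrightarrow x_{i+1}$, $i=1,\dots,n-1$, generate $\mathcal S_n$, it suffices to show $\chi(Z)|_{y_j=qx_j}$ is invariant under each of them. The crucial observation is that, because of the specialization $y_j=qx_j$, swapping $x_i\leftrightarrow x_{i+1}$ in the specialized function is the \emph{same} as first applying $\tau_i\tau'_i$ to $Z$ (swapping both the pair $x_i,x_{i+1}$ and the pair $y_i,y_{i+1}$) and only then specializing. So I want to establish $\chi(\tau_i\tau'_i Z)|_{y=qx}=\chi(Z)|_{y=qx}$, using throughout that the variable permutations commute with the linear form $\chi$, which only touches the Hecke-algebra part.

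First I would combine the two exchange relations of Lemma~\ref{lem:exch}. Working in the localization of $\H_{2n}$ over the field of rational functions in the variables (where $R$-matrices are invertible by the unitarity equation~\eqref{eq:unit}), I solve \eqref{eq:exch-x} for $\tau_i Z$ and \eqref{eq:exch-y} for $\tau'_i Z$, then compose to get
\[
\tau'_i\tau_i Z = \check R_i(x_{i+1}/x_i)\,\check R_{n+i}(y_i/y_{i+1})^{-1}\, Z\, \check R_i(y_i/y_{i+1})\,\check R_{n+i}(x_{i+1}/x_i)^{-1},
\]
where I use that $\check R_i(x_{i+1}/x_i)$ and $\check R_{n+i}(x_{i+1}/x_i)$ depend only on the $x$-variables and so are untouched by $\tau'_i$. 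Specializing $y_j=qx_j$ turns $y_i/y_{i+1}$ into $x_i/x_{i+1}$, so with $u:=x_{i+1}/x_i$ the right-hand side becomes $\check R_i(u)\,\check R_{n+i}(u^{-1})^{-1}\, Z\, \check R_i(u^{-1})\,\check R_{n+i}(u)^{-1}$, all factors now at $y=qx$.

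The heart of the argument is then to apply $\chi$ and collapse this conjugation to the identity. Note $\check R_i(u),\check R_i(u^{-1})\in\H_n^{(1)}$ and $\check R_{n+i}(u^{-1})^{-1},\check R_{n+i}(u)^{-1}\in\H_n^{(2)}$, so all four outer factors lie in $\H_n^{(1)}\otimes\H_n^{(2)}$. Using hypothesis~\eqref{eq:char} with $b=\check R_i(u^{-1})\check R_{n+i}(u)^{-1}$, I cyclically move this pair to the front of the argument of $\chi$; then, since $\H_n^{(1)}$ and $\H_n^{(2)}$ commute elementwise, I reorder the four factors into $\bigl(\check R_i(u^{-1})\check R_i(u)\bigr)\bigl(\check R_{n+i}(u)^{-1}\check R_{n+i}(u^{-1})^{-1}\bigr)$. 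By unitarity~\eqref{eq:unit} the first bracket equals the scalar $(1-tu)(1-tu^{-1})$ and the second equals its inverse, so the product is $1$ and $\chi(\tau_i\tau'_i Z)|_{y=qx}=\chi(Z)|_{y=qx}$, as desired.

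I expect the main obstacle to be bookkeeping rather than conceptual: one must track carefully which copy ($\H_n^{(1)}$ versus $\H_n^{(2)}$) each $R$-matrix belongs to, and in particular get the arguments $u$ versus $u^{-1}$ right after specialization, since it is precisely the pairing of $\check R_i(u^{-1})$ with $\check R_i(u)$ (and likewise for the $n+i$ factors) that makes unitarity applicable. A secondary point to handle cleanly is that the inverse $R$-matrices only make sense after localization; but since $\chi(Z)|_{y=qx}$ is already known to be a Laurent polynomial by Lemma~\ref{lem:deg}, the established equality of rational functions is in fact an equality of Laurent polynomials, so the passage through the localization is harmless.
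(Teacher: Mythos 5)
Your proposal is correct and follows essentially the same route as the paper: both reduce to adjacent transpositions, combine the two exchange relations of Lemma~\ref{lem:exch} to conjugate $Z$ by four $R$-matrices, use hypothesis~\eqref{eq:char} together with the elementwise commutation of $\H_n^{(1)}$ and $\H_n^{(2)}$ to regroup them, and cancel via unitarity~\eqref{eq:unit} after the specialization $y_i=qx_i$. The only (harmless) difference is the order of operations — you specialize before the cyclic rearrangement while the paper rearranges first — and your closing remark on localization versus Laurent polynomiality is a point the paper leaves implicit.
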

\begin{proof}
We assume $n\ge 2$, otherwise the Proposition is trivial.
By definition exchanging $x_i$ and $x_{i+1}$ in $\chi(Z(x_1,\ldots,x_n,qx_1,\ldots,qx_n))$ amounts to considering
$
\chi(\tau_i \tau'_i Z)
$
and then substituting $y_i=qx_i$. One has
\[
\chi(\tau_i \tau'_i Z) = 
\chi(
\check R_i(x_{i+1}/x_i) 
\check R_{n+i}(y_i/y_{i+1})^{-1}
Z
\check R_i(y_i/y_{i+1})
\check R_{n+i}(x_{i+1}/x_i)^{-1})
\]
Assuming the property \eqref{eq:char} of the Proposition, one can rewrite this
\[
\chi(\tau_i \tau'_i Z) = 
\chi(
\check R_i(y_i/y_{i+1})
\check R_i(x_{i+1}/x_i) 
Z
\check R_{n+i}(y_i/y_{i+1})^{-1}
\check R_{n+i}(x_{i+1}/x_i)^{-1})
\]
Substituting $y_i=qx_i$ and simplifying using unitarity equation \eqref{eq:unit}, we obtain the desired symmetry of $\chi(Z(x_1,\ldots,x_n,qx_1,\ldots,qx_n))$.

\end{proof}

In what follows, we restrict ourselves to the following class of linear forms.
Denote by $\left<\bullet\right>$ the linear form that extracts the coefficient of $1$ in a linear combination of $T_w$, i.e., $\left<T_w\right>:=\delta_{w,1}$. One checks that 
\[
\left<ab\right>=\left<ba\right>=\sum_{w\in\mathcal S_{2n}} t^{|w|} a_w b_{w^{-1}},\qquad a=\sum_{w\in\mathcal S_{2n}} a_w T_w,\ b=\sum_{w\in\mathcal S_{2n}} b_w T_w
\]

Given $c\in \Z(\H_n)$, we then define
\begin{equation}\label{eq:deff2}
f(c) := \alpha_n \left<(c\otimes S_n)
Z(x_1,\ldots,x_n,qx_1,\ldots,qx_n)
\right>
\end{equation}
where $S_n$ was defined in \eqref{eq:defS}, and $\alpha_n=(q/t)^{\frac{n(n-1)}{2}}(1-t)^{-n}$ is the same constant as in the introduction.
Clearly, $\chi=\left<(c\otimes S_n)\bullet\right>$ fulfills the condition of Proposition~\ref{prop:charsym}.

\begin{ex}
Three important examples are:
\begin{itemize}
    \item the identity: $f(1_n)$ is the object of \S \ref{sec:intro-identity} and \S \ref{sec:commut}.
    \item the complete antisymmetrizer: $f(A_n)$ will be computed in \S \ref{sec:proof}.
    \item the complete symmetrizer: $f(S_n)$ is also interesting, since the symmetrization makes the model ``color-blind'', and thus equal to the famous Izergin determinant \cite{Iz-6v} for the partition function of the six-vertex model with domain wall boundary conditions, as can be shown by the usual symmetry and recurrence arguments \cite{Kor}. We leave the proof to the interested reader.
\end{itemize}
\rem[gray]{what about the 4th, corresponding to $\tilde h_n$? which of these are Schur-positive, assuming some embedding space (i.e., denominator)?}
\end{ex}

We finally introduce a lattice path representation of $f(c)$, reconnecting
it to the content of \S \ref{sec:intro-lattice}.

\subsection{Lattice path representation}\label{sec:Lattice_path}
Consider the following representation of $\H_{2n}$. 
As a vector space, $\M=(\FF^{n+1})^{\otimes 2n}$.
Letting $(e_i)_{i=1,\ldots,n+1}$ be the standard basis of $\FF^{n+1}$, define the action of $\H_{2n}$ by
\[
T_i (e_{k_1}\otimes\cdots\otimes e_{k_{2n}}) = \sum_{k'_i,k'_{i+1}=1}^{n+1}
\begin{bmatrix}&k_i&\\k'_i&&k_{i+1}\\&k'_{i+1}&
\end{bmatrix}
e_{k_1}\otimes\cdots\otimes e_{k_{i-1}}\otimes e_{k'_i}\otimes e_{k'_{i+1}}
\otimes e_{k_{i+2}}\otimes\cdots\otimes e_{k_{2n}}
\]
where
\begin{equation}\label{eq:rep}
\begin{bmatrix}&i&\\j&&k\\&\ell&
\end{bmatrix}
=
\begin{cases}
t-1 & i=j>k=\ell\\
1 & j=k>i=\ell\\
t & j=k\le i=\ell\\
0 & \text{else}
\end{cases}
\end{equation}

\newcommand\cosets{\mathcal{S}^{(2)}_n\backslash\mathcal{S}_{2n}}
Given $v\in\mathcal{S}_{2n}$, define
\[
\ket{v}=e_{\omega(v(1))}\otimes\cdots\otimes e_{\omega(v(2n))}
\]
where $\omega(i)=i$ for $1\le i\le n$ and $\omega(i)=n+1$ for $n+1\le i\le 2n$.
Note that $\ket{v}$ only depends on the class of $v$ inside $\cosets$; we define this way
a basis indexed by $\cosets$ of the submodule of $\M$ of interest to us.

Similarly, define dual basis elements
\[
\bra{v}=e^*_{\omega(v(1))}\otimes\cdots\otimes e^*_{\omega(v(2n))}
\]
in $\mathcal M^*$.
\rem[gray]{here I choose the convention that time goes SW. the convention of my Schubert papers, but doesn't match with permutations
of pipe dreams. $\ket{w}=w(1),w(2),\ldots,w(n),$blank} 

We have the easy lemma
\begin{lem}\label{lem:coset}
For all $w\in\mathcal{S}_{2n}$,
\begin{enumerate}[(a)]
\item
$\bra{1} T_w = t^{|w|} \bra{w}$.
\item $T_w\ket{1}=t^{|w_2|} \ket{w^{-1}}$ where $w_2$ is defined by $w^{-1}=w_1w_2$,
$w_1\in\mathcal{S}^{(2)}_n$ and $w_2$ a minimal representative in $\cosets$.
\end{enumerate}
\end{lem}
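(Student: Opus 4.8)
The plan is to reduce everything to one computation: the action of a single generator $T_i$ on the coset basis, read off directly from the weight table~\eqref{eq:rep}. Using that $\omega$ is weakly increasing and takes the common value $n+1$ on all of $\{n+1,\dots,2n\}$, a four-line case check on the nonzero entries of~\eqref{eq:rep} gives, for the right action on $\mathcal M^*$,
\[
\bra v T_i=\begin{cases} t\,\bra{vs_i}& \omega(v(i))\le\omega(v(i+1))\\ (t-1)\bra v+\bra{vs_i}& \omega(v(i))>\omega(v(i+1))\end{cases}
\]
and, for the left action on $\mathcal M$,
\[
T_i\ket v=\begin{cases} \ket{vs_i}& \omega(v(i))<\omega(v(i+1))\\ t\,\ket v& \omega(v(i))=\omega(v(i+1))\\ (t-1)\ket v+t\,\ket{vs_i}& \omega(v(i))>\omega(v(i+1)).\end{cases}
\]
Both parts of the lemma then follow by building up $T_w$ along a reduced word and applying these formulas repeatedly.

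For part (a) I would induct on $|w|$. Fix a reduced word $w=s_{i_1}\cdots s_{i_k}$ and set $v_m=s_{i_1}\cdots s_{i_m}$; reducedness gives $|v_m s_{i_{m+1}}|>|v_m|$, i.e.\ $v_m(i_{m+1})<v_m(i_{m+1}+1)$, whence $\omega(v_m(i_{m+1}))\le\omega(v_m(i_{m+1}+1))$ since $\omega$ is monotone. Thus at every step the first right-action case applies and $\bra{v_m}T_{i_{m+1}}=t\,\bra{v_{m+1}}$; starting from $\bra 1$ (whose entries $\omega(1),\dots,\omega(2n)$ are already weakly increasing) this yields $\bra 1 T_w=t^{|w|}\bra w$. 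The outcome is independent of the chosen reduced word because $T_w$ is.

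For part (b) I would first pass to the minimal representative. The parabolic factorization $w^{-1}=w_1w_2$ with $w_1\in\mathcal S^{(2)}_n$ and $w_2$ minimal satisfies $\ell(w^{-1})=\ell(w_1)+\ell(w_2)$, and hence provides a reduced factorization $T_w=T_{w_2^{-1}}T_{w_1^{-1}}$; since $\ket{w^{-1}}=\ket{w_2}$ depends only on the coset, it suffices to evaluate the two factors on $\ket 1$ and multiply. Applying the left-action formulas along a reduced word adapted to this factorization, starting from $\ket 1$ (the colours $1,\dots,n$ in increasing order, followed by the blanks), the point to verify is that the ``descent'' case $\omega(v(i))>\omega(v(i+1))$ never occurs, so that every intermediate state is a single basis vector and the accumulated scalar is a pure power of $t$, equal to the number of weight-$t$ (``equal'') vertices encountered. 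A direct count of these vertices identifies the exponent with $\ell(w_2)$, giving $T_w\ket 1=t^{|w_2|}\ket{w^{-1}}$.

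The main obstacle is the bookkeeping in part (b): one must exhibit a reduced word along which no descent ever appears — otherwise lower-order terms are generated and the image is no longer a single basis vector — and then match the number of weight-$t$ steps to $\ell(w_2)$. I expect both points to reduce to the standard combinatorics of minimal coset representatives in $\mathcal S^{(2)}_n\backslash\mathcal S_{2n}$, the essential input being that $w_2$ controls exactly the crossings that receive the Boltzmann weight $t$.
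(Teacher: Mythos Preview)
Your strategy is exactly the paper's: read off from~\eqref{eq:rep} how a single $T_i$ acts on a coset vector, then induct along a reduced word. For part~(a) your version is in fact cleaner than the paper's --- the observation that reducedness plus monotonicity of $\omega$ forces every step into the weight-$t$ branch works for \emph{any} reduced word, so the preliminary factorisation $w=w_1w_2$ the paper performs is unnecessary.

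For part~(b), your plan is right but the asserted count at the end is wrong, and carrying out the bookkeeping you flag as the ``main obstacle'' would reveal this. Along the factored reduced word $T_w=T_{w_2^{-1}}T_{w_1^{-1}}$, every one of the $|w_1|$ steps in $T_{w_1^{-1}}$ acts on two positions in $\{n+1,\dots,2n\}$, both labelled $n+1$, so lands in your ``equal'' case with weight $t$. For $T_{w_2^{-1}}\ket1$, each step of a reduced word swaps a pair of values that is a value-inversion of $w_2$; minimality of $w_2$ in $\mathcal S_n^{(2)}\backslash\mathcal S_{2n}$ says precisely that $w_2^{-1}(n{+}1)<\cdots<w_2^{-1}(2n)$, i.e.\ $w_2$ has \emph{no} value-inversion with both entries $>n$, so every step is your strict case with weight $1$. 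The total exponent is therefore $|w_1|$, not $|w_2|$.

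This is not a flaw in your method but a typo in the stated lemma: the exponent should read $|w_1|$. You can confirm this against the paper's own uses --- e.g.\ the verification of~\eqref{eq:scalprod} (where $w\in\mathcal S_n^{(2)}$ forces $w_2=1$ yet the answer must be $t^{|w|}$) and the passage from the first to the second line of~\eqref{eq:intermlim0} both require the exponent $|w_1|$.
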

\begin{proof}
(a) Write $w=w_1w_2$ where $w_1\in\mathcal{S}^{(2)}_n$ and $w_2$ minimal representative in $\cosets$.
One has directly $\bra{1}T_{w_1}=t^{|w_1|}\bra{1}$. Then use a reduced decomposition of $w_2$ and note that
in \eqref{eq:rep}, one only ever uses the case $j<\ell$ which leads to $\bra{1}T_{w_2}=t^{|w_2|}\bra{w_2}$ inductively. At both stages, the only nonzero contributions come from the third line of \eqref{eq:rep}.
Case (b) is similar except the first (resp.\ second) stage involves the second (resp.\ third) line of \eqref{eq:rep}.

\end{proof}

\rem[gray]{we always choose $x_i$ for rows vs $y_j$ for columns}

Define, noting that $\mathcal S^{(1)}_n$ naturally sits inside $\cosets$,
\begin{equation}\label{eq:defF}
F_v(x_1,\ldots,x_n,y_1,\ldots,y_n) = 
\bra{1} Z(x_1,\ldots,x_n,y_1,\ldots,y_n)\ket{v}\qquad v\in\mathcal{S}^{(1)}_n
\end{equation}
This is a slightly more general definition than we need for the interpretation of the right hand side of \eqref{eq:deff}; \rem[gray]{because of $x$s and $y$s} indeed, we have:
\begin{prop}\label{prop:latticepath}
\[
\sum_{\substack{\text{lattice paths }P\\\text{on the }n\times n\text{ grid}\\\conn(P)=v}} \wt(P)
 = F_v(x_1,\ldots,x_n,q x_1,\ldots,q x_n)
\]
\end{prop}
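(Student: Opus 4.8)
The plan is to evaluate the matrix element $F_v=\bra1 Z\ket v$ directly inside the representation $\M$ and to recognise it as the stated sum over lattice paths. Since $Z$ is, by \eqref{eq:defZ}, an ordered product of the $R$-matrices $\check R_i(x_i/y_j)$ arranged on the $n\times n$ grid, and since each $\check R_i$ acts through \eqref{eq:defR} and \eqref{eq:rep} only on the two tensor factors $i,i+1$, inserting a complete set of basis states on every internal edge of the grid expresses $\bra1 Z\ket v$ as a sum over all assignments of basis vectors $e_k\in\{e_1,\dots,e_{n+1}\}$ to the internal edges, each vertex $(i,j)$ contributing the scalar matrix element of $\check R$ evaluated at $u=x_i/y_j$. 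This is the standard vertex-model unfolding of a partition function, and it reduces the statement to two independent checks: the local weights and the boundary data.

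First I would carry out the local computation. Under the dictionary ``edge carries $e_c$ with $c\le n$'' $\leftrightarrow$ ``a path of label $c$ occupies that edge'' and ``edge carries $e_{n+1}$'' $\leftrightarrow$ ``the edge is empty'', together with the colour-ordering convention (the background $n+1$ being the largest label), I would expand $\check R_i(u)=1-t+(1-u)T_i$ using \eqref{eq:rep} and tabulate its matrix element for every admissible quadruple of surrounding colours. Writing $a,b$ for the two incoming colours, the only nonzero entries come from the ``straight'' and the ``swapped'' configurations, yielding the four values $1-t$, $u(1-t)$, $1-u$, $t(1-u)$ according to the ordering of $a$ and $b$, together with $1-tu$ when all four edges agree. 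Setting $y_j=qx_j$, so that $u=q^{-1}x_ix_j^{-1}$, these are precisely the five Boltzmann weights of \eqref{eq:wt} once the participating labels are sorted into ``red'' (lower) and ``green'' (higher); in particular a single path turning from the left edge to the top edge yields $1-t$ (first line of \eqref{eq:wt}), while the all-background vertex yields $1-tq^{-1}x_ix_j^{-1}$ (last line). I would also note that every nonzero local weight conserves colours across the vertex (the two incoming colours equal the two outgoing colours as a multiset), so that each colour automatically traces out a single non-branching lattice path and the configurations of the expansion are genuine coloured lattice paths.

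It remains to match the boundary data, and this is where the main bookkeeping lies. By the explicit form of the basis vectors, $\ket v=e_{v(1)}\otimes\dots\otimes e_{v(n)}\otimes e_{n+1}^{\otimes n}$ and $\bra1=e_1^*\otimes\dots\otimes e_n^*\otimes(e_{n+1}^*)^{\otimes n}$, so the $n$ path colours $1,\dots,n$ occupy the first $n$ external legs on each side while the last $n$ legs carry the background. Tracking the $x$- and $y$-strands of \eqref{eq:defZ} through the grid, I would show that the sorted half of $\bra1$ pins the paths to enter the left boundary as the labels $1,\dots,n$ read from top to bottom, that the background halves force the bottom and right boundaries to be empty, and that the half of $\ket v$ reading $v(1),\dots,v(n)$ records exactly which label exits at each top position, namely that the label leaving the $j$-th top edge is $v(j)$. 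By the definition of $\conn$ this is precisely the condition $\conn(P)=v$, so the expansion of $\bra1 Z\ket v$ at $y_j=qx_j$ collapses to $\sum_{\conn(P)=v}\wt(P)$, as asserted.

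The hard part, as flagged, is the boundary/orientation bookkeeping: one must verify that the strand ordering implicit in the product \eqref{eq:defZ}, combined with the ``time goes SW'' reading convention of the diagrammatic calculus, delivers the connectivity $v$ rather than $v^{-1}$ or a reflection of it. I expect Lemma~\ref{lem:coset}, which controls how $\bra1$ and $\ket v$ interact with the $T_w$ and which guarantees that $\ket v$ depends only on its class in $\cosets$, to be the right tool for pinning down this identification rigorously and for confirming that the sum is indexed correctly.
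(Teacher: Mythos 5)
Your proposal is correct and takes essentially the same route as the paper's own proof: the paper likewise expands $\bra{1}Z\ket{v}$ in the representation \eqref{eq:rep} as a vertex-model partition function over edge labels in $\{1,\ldots,n+1\}$, fixes the external labels ($1,\ldots,n$ on the left, $v(1),\ldots,v(n)$ on top, $n+1$ on the right and bottom), and checks locally that \eqref{eq:defR} combined with \eqref{eq:rep} at $y_i=qx_i$ reproduces the weights \eqref{eq:wt}. Your local table $1-t,\ u(1-t),\ 1-u,\ t(1-u),\ 1-tu$ with $u=q^{-1}x_ix_j^{-1}$, and your boundary identification (the label exiting the $j$-th top edge is $v(j)$, i.e.\ $\conn(P)=v$), match the paper exactly.
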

\begin{proof}
Consider the diagrammatic representation \eqref{eq:defZ} of $Z$ and apply the representation described at the start of this section; this amounts to assigning to each edge a label in $\{1,\ldots,n+1\}$.
$F_v$ corresponds, in accordance with \eqref{eq:defF}, to further imposing that labels of external edges be fixed: $1,\ldots,n$ on the left side, $v(1),\ldots,v(n)$ on the top side, and $n+1$ on the right and bottom sides. Now identify labels $1,\ldots,n$ with the corresponding lattice paths, and $n+1$ with ``empty''. Then this produces precisely a lattice path with connectivity $P$. Finally, what needs to be checked is that the weight assigned to a lattice path coincides with the entry of $Z$. This can be done locally at the level of each individual crossing ($R$-matrix); and indeed, combining \eqref{eq:rep} with \eqref{eq:defR} and setting $y_i=qx_i$ leads to the weights in \eqref{eq:wt}.
\end{proof}

Finally, we can show that the definition \eqref{eq:deff2} given above is equivalent to the one \eqref{eq:deff} in the introduction:
\begin{lem}\label{lem:latticepath}
If $c=\sum_{v\in\mathcal S_n} c_v T_v \in \Z(\H_n)$, then $c_v=c_{v^{-1}}$ for all $v\in\mathcal S_n$ and
\[
f(c) =\alpha_n \sum_{v\in\mathcal S_n} c_{v} F_v(x_1,\ldots,x_n,qx_1,\ldots,qx_n)
\]
\end{lem}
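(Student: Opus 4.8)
The statement has two parts, the coefficient symmetry $c_v=c_{v^{-1}}$ of a central element and the lattice-path expansion of $f(c)$, and I would establish them in that order. For the symmetry, consider the $\FF$-linear anti-automorphism $\iota$ of $\H_n$ determined by $\iota(T_i)=T_i$: it is well defined because the defining relations are invariant under reversal of words, and on the standard basis it acts by $\iota(T_w)=T_{w^{-1}}$. Each Jucys--Murphy element $J_{j,n}$ is a sum of the palindromic words $T_iT_{i+1}\cdots T_{j-1}T_{j-2}\cdots T_i$, so $\iota(J_{j,n})=J_{j,n}$. Since the $J_{j,n}$ commute and $\Z(\H_n)$ is the algebra of symmetric polynomials in them (the standard Jucys--Murphy/seminormal description, cf.\ \cite{Ram-seminormal}, which also underlies the isomorphism $\Psi$), the anti-automorphism $\iota$ fixes $\Z(\H_n)$ pointwise. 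Writing $\iota(c)=c$ in the standard basis gives $\sum_v c_vT_{v^{-1}}=\sum_v c_vT_v$, that is $c_v=c_{v^{-1}}$.

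For the expansion I would prove the stronger statement that the two $\FF$-linear forms $a\mapsto \left<(c\otimes S_n)a\right>$ and $a\mapsto \sum_{v\in\mathcal S^{(1)}_n}c_v\bra{1}a\ket{v}$ on $\H_{2n}$ are equal, and then apply their common value to $a=Z(x_1,\ldots,x_n,qx_1,\ldots,qx_n)$, the factor $\alpha_n$ being shared by \eqref{eq:deff2} and the asserted formula. By linearity it suffices to test on each standard basis vector $T_u$, $u\in\mathcal S_{2n}$. The two inputs needed are the trace formula $\left<aT_u\right>=t^{|u|}a_{u^{-1}}$, a special case of $\left<ab\right>=\sum_w t^{|w|}a_wb_{w^{-1}}$ with $b=T_u$, and Lemma~\ref{lem:coset}(a), which yields $\bra{1}T_u\ket{v}=t^{|u|}\braket{u}{v}=t^{|u|}[\,u\in\mathcal S^{(2)}_n v\,]$.

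Evaluating each side on $T_u$ is then mechanical. Expanding $c\otimes S_n=\sum_{v'\in\mathcal S^{(1)}_n,\,g\in\mathcal S^{(2)}_n}c_{v'}T_{v'g}$ and using $\left<aT_u\right>=t^{|u|}a_{u^{-1}}$, the left form equals $t^{|u|}c_{v(u^{-1})}$ when $u$ lies in the Young subgroup $\mathcal S^{(1)}_n\times\mathcal S^{(2)}_n$ and $0$ otherwise, where $v(\cdot)$ denotes the $\mathcal S^{(1)}_n$-component. Under the same support condition the right form equals $t^{|u|}c_{v(u)}$, since $[\,u\in\mathcal S^{(2)}_n v\,]$ is nonzero for $v\in\mathcal S^{(1)}_n$ exactly when $v=v(u)$. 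Because the components of $u$ and $u^{-1}$ satisfy $v(u^{-1})=v(u)^{-1}$ (the two factors commute), the two outputs coincide precisely by virtue of $c_{v(u)}=c_{v(u)^{-1}}$, the symmetry just proved. Hence the linear forms agree, and evaluating at $Z|_{y_i=qx_i}$ gives $f(c)=\alpha_n\sum_{v}c_vF_v(x_1,\ldots,x_n,qx_1,\ldots,qx_n)$.

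The only genuinely non-formal ingredient is the symmetry $c_v=c_{v^{-1}}$; the rest is bookkeeping with cosets of the Young subgroup. I expect the main subtlety to be exactly this bookkeeping: the trace form pairs $T_u$ with $T_{u^{-1}}$, so the left-hand side naturally records the $\mathcal S^{(1)}_n$-component of $u^{-1}$ rather than of $u$, and recognizing that this apparent mismatch is resolved by $c_v=c_{v^{-1}}$ is what links the two halves of the lemma.
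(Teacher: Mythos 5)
Your proof is correct, and while its overall strategy (reduce everything to the trace form, the coset lemma, and the coefficient symmetry) matches the paper's, both halves are executed by genuinely different routes, so a comparison is worth recording. For the symmetry $c_v=c_{v^{-1}}$, the paper argues inside $\Z$ itself: the property holds trivially for the symmetrizers $S_n$, which generate $\Z$ under the shuffle product \eqref{eq:defZstar} (via $\Phi(S_n)=h_n$), and each summand $T_w(a\otimes b)T_{w^{-1}}$ is stable under the anti-automorphism $\iota$ fixing the $T_i$, so $*$ preserves $\iota$-invariance. You instead fix $\Z(\Hn)$ pointwise under $\iota$ through the Jucys--Murphy elements; this works, but note it invokes the converse of what the paper records — the paper only states that symmetric polynomials in the $J_{j,n}$ are central, not that they exhaust the center. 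Over $\FF=\QQ(q,t)$ the converse is standard semisimple theory (contents separate the blocks, so symmetric polynomials in the $J_{j,n}$ hit every central idempotent), so your argument is sound, but it carries a heavier classical input than the paper's one-liner. For the expansion, the paper proves the single identity \eqref{eq:scalprod}, $\left<x(1\otimes S_n)\right>=\bra{1}x\ket{1}$, then uses cyclicity of $\left<\bullet\right>$ to move $c$ to the right of $Z$ and Lemma~\ref{lem:coset}(b) to absorb it into the ket, producing $\sum_v c_{v^{-1}}F_v$ before invoking the symmetry. You prove the stronger linear-form identity $\left<(c\otimes S_n)a\right>=\sum_v c_v\bra{1}a\ket{v}$ for all $a\in\H_{2n}$ by testing on the basis $T_u$ with Lemma~\ref{lem:coset}(a) and $\left<aT_u\right>=t^{|u|}a_{u^{-1}}$; at $c=1_n$ this specializes exactly to \eqref{eq:scalprod}, so your computation subsumes the paper's preliminary step, avoids both cyclicity and part (b) of the coset lemma, and places the use of $c_v=c_{v^{-1}}$ at precisely the analogous spot (your mismatch $v(u^{-1})=v(u)^{-1}$ playing the role of the paper's residual $c_{v^{-1}}$). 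Your coset bookkeeping — $T_{v'}T_g=T_{v'g}$ with additive lengths for the two commuting parabolic factors, $\bra{1}T_u\ket{v}=t^{|u|}[\,u\in\mathcal S_n^{(2)}v\,]$, and vanishing of both forms off the Young subgroup — is all correct. In short: the paper's proof is shorter given its already-assembled toolkit, while yours is more self-contained on the expansion side and makes the role of the symmetry maximally transparent, at the cost of importing the Jucys--Murphy description of the center.
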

\begin{proof}
First we observe that
\begin{equation}\label{eq:scalprod}
\left<x(1\otimes S_n)\right> = \bra{1}x\ket{1}\qquad\forall x\in \H_{2n}
\end{equation}
by testing this on $x=T_w$, $w\in\mathcal S_{2n}$, and noting that
both sides of the equality are $0$ unless $w\in\mathcal{S}_{2n}^{(2)}$,
in which case they are equal to $t^{|w|}$ (using the definition \eqref{eq:defS} for the left hand side and Lemma~\ref{lem:coset} for the right hand side).
We now start from the definition \eqref{eq:deff2} of $f(c)$ and apply the equality above, as well as Lemma~\ref{lem:coset}~(b):
\[
f(c) =\alpha_n \bra{1}Zc\ket{1} = \alpha_n \sum_{v\in\mathcal S_n} c_{v^{-1}} F_v(x_1,\ldots,x_n,qx_1,\ldots,qx_n)
\]
This is almost \eqref{eq:deff}, except a small simplification was made:
any central element satisfies $c_v=c_{v^{-1}}$
(short proof: it is obviously true for the generators $S_n$ of $\Z$, and $*$ preserves this property).
\end{proof}

We conclude this section by reformulating more explicitly the exchange relations of Lemma~\ref{lem:exch} in terms of the $F_v$. The next proposition will not be used in what follows, but is included to reconnect to the existing literature \cite{BW-coloured}.
\begin{prop}\label{prop:exch}
For $v\in\mathcal S_n^{(1)}$ and $i,j=1,\ldots,n-1$,
\begin{align*} (1-t x_{i+1}/x_{i})\tau_i F_v &=
\begin{cases} (1-t) F_v + (1-x_{i+1}/x_{i}) F_{s_i v}& s_i v > v \\
(1-t)(x_{i+1}/x_{i}) F_v + t(1-x_{i+1}/x_{i}) F_{s_i v}& s_i v < v
\end{cases} \\ (1-t y_j/y_{j+1})\tau'_j F_v &=
\begin{cases} (1-t) F_v + (1-y_j/y_{j+1}) F_{v s_j}& v s_j > v \\
(1-t)(y_j/y_{j+1}) F_v + t(1-y_j/y_{j+1}) F_{v s_j}& v s_j < v
\end{cases}
\end{align*}
\end{prop}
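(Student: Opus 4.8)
The plan is to obtain both identities by sandwiching the operator exchange relations of Lemma~\ref{lem:exch} between $\bra1$ and $\ket v$ and evaluating the two $R$-matrices on each side. The key preliminary observation is that the ``auxiliary'' $R$-matrix (the one with index $n+i$ or $n+j$) always acts on a pair of strands that both carry the empty colour $n+1$: for $v\in\mathcal{S}_n^{(1)}$ the positions $n+i,n+i+1$ of $\ket v$ are filled with $e_{n+1}$, and likewise for $\bra1$. On such a pair $T_{n+i}$ acts by the scalar $t$ (the third line of \eqref{eq:rep}), so $\check R_{n+i}(u)\ket v=(1-tu)\ket v$ and $\bra1\check R_{n+j}(u')=(1-tu')\bra1$. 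Sandwiching and using $\bra1(\tau_iZ)\ket v=\tau_iF_v$ and $\bra1(\tau'_jZ)\ket v=\tau'_jF_v$, this reproduces the prefactors $1-t\,x_{i+1}/x_i$ and $1-t\,y_j/y_{j+1}$ on the left-hand sides.

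For the $y$-relation \eqref{eq:exch-y} the remaining factor $\check R_j(u')=(1-t)+(1-u')T_j$ sits directly next to $\ket v$, so all that is needed is the action of $T_j$ on the basis vector. Reading it off \eqref{eq:rep} (equivalently the right-multiplication half of Lemma~\ref{lem:basic}), $T_j\ket v$ equals $\ket{vs_j}$ when $vs_j>v$ and $(t-1)\ket v+t\ket{vs_j}$ when $vs_j<v$; since $\bra1 Z\ket{vs_j}=F_{vs_j}$, collecting terms and using $(1-t)+(1-u')(t-1)=(1-t)u'$ yields the two cases of the $y$-identity immediately.

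The $x$-relation \eqref{eq:exch-x} is the mirror image, except that the colour-carrying factor $\check R_i(u)=(1-t)+(1-u)T_i$ now sits next to $\bra1$ rather than $\ket v$. Writing $\bra1 T_i=t\bra{s_i}$ by Lemma~\ref{lem:coset}(a), the right-hand side becomes $(1-t)F_v+(1-u)t\,\bra{s_i}Z\ket v$, so the whole content reduces to evaluating the matrix element $\bra{s_i}Z\ket v$. I would compute this by moving the extra generator across $Z$: using \eqref{eq:scalprod} together with the cyclicity of $\left<\bullet\right>$ and the fact that $T_i\in\H_n^{(1)}$ commutes with $1\otimes S_n$, one gets $\bra1 T_iY\ket1=\bra1 YT_i\ket1$ for any $Y$, and, after expressing $\ket v$ and $\ket{s_iv}$ in the form $T_w\ket1$ via Lemma~\ref{lem:coset}, this gives $\bra{s_i}Z\ket v=t^{-1}F_{s_iv}$ when $s_iv>v$. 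When $s_iv<v$ one first trades $T_i$ for $T_i^{-1}$ using $T_i=(t-1)+tT_i^{-1}$, which produces the extra term $(1-t^{-1})F_v$; substituting back and simplifying reproduces the two cases of the $x$-identity.

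I expect the main obstacle to be exactly this last step. Unlike the $y$-relation, where the braiding acts directly on $\ket v$ and the two-case answer is read off at once, the $x$-relation forces the nontrivial generator onto the bra, and the representation \eqref{eq:rep} is not symmetric between the two boundaries. The delicate part is the bookkeeping of powers of $t$ while commuting $T_i$ through $Z$ and re-expressing $\bra{s_i}Z\ket v$ in the standard basis $\{F_w\}$; the length condition $s_iv\gtrless v$ enters only here, through whether $T_{v^{-1}}T_i$ is reduced or whether $T_i$ must be inverted.
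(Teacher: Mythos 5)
Your proposal is correct, and its skeleton — sandwiching the exchange relations of Lemma~\ref{lem:exch} between $\bra{1}$ and $\ket{v}$, and evaluating the auxiliary $R$-matrix $\check R_{n+i}$ or $\check R_{n+j}$ as the scalar $1-tu$ on two empty strands — is exactly the paper's; your treatment of the $y$-relation coincides with the paper's verbatim. Where you genuinely diverge is the $x$-relation. The paper stays on the left boundary: it expands $\check R_i(u)Z=\sum_w Z_w\bigl[(1-t)T_w+(1-u)\,T_iT_w\bigr]$ via the left-multiplication half of Lemma~\ref{lem:basic}, and then resums over the cosets $\mathcal S_n^{(2)}v$ and $\mathcal S_n^{(2)}s_iv$ using the expansion $F_v=\sum_{w\in\mathcal S_n^{(2)}v}t^{|w|}Z_w$ (exploiting that $s_iw\gtrless w$ holds uniformly on each coset). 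You instead transport $T_i$ across $Z$ by the trace property $\bra{1}T_iY\ket{1}=\bra{1}YT_i\ket{1}$ — valid by \eqref{eq:scalprod}, cyclicity of $\left<\bullet\right>$, and $[T_i,1\otimes S_n]=0$ — which converts the problem into the right-multiplication rule $T_{v^{-1}}T_i=T_{(s_iv)^{-1}}$ when reduced, or $(t-1)T_{v^{-1}}+tT_{v^{-1}s_i}$ otherwise; thus both cases of the $x$-relation are handled by the same mechanism as the $y$-relation, and no coset resummation is needed. Your constants check out: $\bra{s_i}Z\ket{v}=t^{-1}F_{s_iv}$ for $s_iv>v$ and $\bra{s_i}Z\ket{v}=(1-t^{-1})F_v+F_{s_iv}$ for $s_iv<v$, which reproduce both displayed cases after the simplification $(1-t)+(1-u)(t-1)=(1-t)u$. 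One caution on the step you yourself flagged as delicate: your constants require the normalization $T_{v^{-1}}\ket{1}=\ket{v}$ for $v\in\mathcal S_n^{(1)}$ (each step is a weight-one swap, second line of \eqref{eq:rep}), whereas Lemma~\ref{lem:coset}(b) as printed carries a factor $t^{|w_2|}$, which here would read $T_{v^{-1}}\ket{1}=t^{|v|}\ket{v}$ and would spoil your powers of $t$; that exponent should be $|w_1|$ (this is how the lemma is actually used elsewhere in the paper, e.g.\ in Lemma~\ref{lem:latticepath}), and since you landed on the correct $t$-bookkeeping, your argument is sound as written. What your route buys is uniformity of the two relations and independence from the basis expansion of $Z$; what the paper's buys is an explicit formula for $\check R_iZ$ in the standard basis, reusable beyond matrix elements against $\bra{1}$.
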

\begin{proof}
First, note that \eqref{eq:exp}, \eqref{eq:defF}, and Lemma~\ref{lem:coset}~(a) imply
\begin{equation}\label{eq:expF}
F_v = \sum_{w\in \mathcal{S}_n^{(2)} v} t^{|w|} Z_w
\end{equation}
where the summation is over the coset of $v$ in $\cosets$.

We then start from the identity \eqref{eq:exch-x} of Lemma~\ref{lem:exch}.

Using \eqref{eq:exp} and Lemma~\ref{lem:basic},
we obtain for the right hand side:
\[
\check R_i(x_{i+1}/x_i) Z = \sum_{w\in\mathcal{S}_{2n}} Z_w 
\begin{cases}
(1-t) T_w + (1-x_{i+1}/x_i) T_{s_iw}
& s_iw > w
\\
(1-t)(x_{i+1}/x_i) T_w + t(1-x_{i+1}/x_i) T_{s_iw}
& s_iw < w
\end{cases}
\]
We now take the bra-ket $\bra{1}\cdot \ket{v}$ and note that only $w$ such that either $w \in \mathcal{S}_n^{(2)} v$ or $w  \in \mathcal{S}_n^{(2)} s_iv$ contribute.
We get for $s_i v > v$
\begin{align*}
\bra{1}
\check R_i(x_{i+1}/x_i) Z
\ket{v}
&=
(1-t)\sum_{w  \in \mathcal{S}_n^{(2)} v} t^{|w|}Z_w + t(1-x_{i+1}/x_i)\sum_{w \in \mathcal{S}_n^{(2)} s_i v} t^{|s_i w|} Z_w
\\
&=
(1-t) F_v + (1-x_{i+1}/x_i) F_{s_i v} 
\end{align*}
and for $s_i v<v$
\begin{align*}
\bra{1}
\check R_i(x_{i+1}/x_i) Z
\ket{v}
&=
(1-t)(x_{i+1}/x_i)\sum_{w \in \mathcal{S}_n^{(2)} v} t^{|w|}Z_w + (1-x_{i+1}/x_i)\sum_{w  \in \mathcal{S}_n^{(2)} s_i v} t^{|s_i w|} Z_w
\\
&=
(1-t)(x_{i+1}/x_i) F_v + t(1-x_{i+1}/x_i) F_{s_i v} 
\end{align*}

The bra-ket $\bra{1}\cdot\ket{v}$ of the left hand side is simple to evaluate: we note that $\check R_{n+i}(x_{i+1}/x_i)\ket{v}
= (1-t x_{i+1}/x_i)\ket{v}$ because the $i^{\rm th}$ and $(i+1)^{\rm th}$ factors of $\ket{v}$ are $e_{n+1}\otimes e_{n+1}$;
therefore
\[
\bra{1} (\tau_i Z) \check R_{n+i}(x_{i+1}/x_i)\ket{v}=
(1-t x_{i+1}/x_i) \tau_i F_v
\]
This proves the first identity of the Proposition.

The second identity is actually simpler to prove (there isn't a full symmetry between rows and columns).
Start analogously from \eqref{eq:exch-y} of Lemma~\ref{lem:exch}.
Now apply the bra-ket $\bra{1}\cdot\ket{v}$ directly. One has $\bra{1}\check R_{n+j}(y_j/y_{j+1})=(1-ty_j/y_{j+1})\bra{1}$ 
by the same argument as above;
but one also has from \eqref{eq:rep}
\[
\check R_j(y_j/y_{j+1})
\ket{v} =
\begin{cases}
(1-t) \ket{v} + (1-y_j/y_{j+1}) \ket{vs_j}& vs_j > v
\\
(y_j/y_{j+1})(1-t)\ket{v}+t(1-y_j/y_{j+1})\ket{vs_j}& vs_j < v
\end{cases}
\]
which leads to the desired identity.
\end{proof}
\rem[gray]{$K_w=(-1)^{n^2-|w|} q_1^{n(n+1)/2}q_2^{n^2} (1-q_1q_2)^{-n} \prod_{i=1}^n (y_i/x_i)^{i-1} F_w$ (where $K_w$ -- $K$-polynomial of component of lower-upper scheme -- is a generalization of $K_n$ to arbitrary permutations and distinct alphabets; reduces to $K_n$ when $w=1$ and $x_i=y_i$).
the annoying sign is because geometrically we'd want to change the sign of $a$ and $c$ weights.}

These relations can be used to determine the $F_v$ inductively. More precisely, consider first the computation of $F_{w_0}$ where $w_0$ is the longest element of $\mathcal S_n^{(1)}$. There is a unique lattice path with such connectivity, of the form
\[
\begin{tikzpicture}[scale=0.75]\draw[dotted] (0.5,0.5) grid (4.5,-3.5);\begin{scope}[very thick,shift={(0.5,0.5)},scale=1cm]\draw[red!80!black] svg {M 0 -.5 L 1 -.5 L 2 -.5 L 3 -.5 L 3.5 -.5 3.5 0}; \draw[green!60!black] svg {M 0 -1.5 L 1 -1.5 L 2 -1.5 L 2.5 -1.5 2.5 -1 L 2.5 0}; \draw[blue] svg {M 0 -2.5 L 1 -2.5 L 1.5 -2.5 1.5 -2 L 1.5 -1 L 1.5 0}; \draw[yellow!90!black] svg {M 0 -3.5 L .5 -3.5 .5 -3 L .5 -2 L .5 -1 L .5 0}; \end{scope}\end{tikzpicture} 
\]
from which we conclude
\begin{equation}\label{eq:w0}
F_{w_0}=(1-t)^n t^{\frac{n(n-1)}{2}} \prod_{\substack{1\le i,j\le n\\ i+j<n+1}}(1-x_i/y_j) \prod_{\substack{1\le i,j\le n\\ i+j>n+1}} (1-tx_i/y_j)
\end{equation}
One can then apply {\em either}\/ the second relation or the fourth relation of Proposition~\ref{prop:exch} to define
the $F_w$ inductively.

\section{The shuffle algebra}\label{sec:shuffle}
This section is devoted to the trigonometric Feigin--Odesskii shuffle algebra \cite{FO}. It was studied notably in \cite{FHHSY,FT-shuffle,SV-Hall,Ng}. We will review some known results and derive several formulas which are required for proving Theorems \ref{thm:square}-\ref{thm:mainK}. We will follow the conventions of \cite{FHHSY} and define the shuffle algebra  $\mathcal{A}$ which is slightly different from the algebra $\A$ in \S \ref{sec:intro-center}. The precise connection between $\mathcal{A}$ and $\A$ is given in Remark \ref{rmk:A}. For the treatment of the shuffle algebra $\mathcal{A}$ it is convenient to use three parameters $q_1,q_2,q_3$, expressed via $q$ and $t$ as
\begin{align}
    q_1= q^{-1} \qquad
    q_2 = q t^{-1}  \qquad
    q_3 = (q_1 q_2)^{-1} = t
\end{align}

Let us outline the goals of this section:
\begin{itemize}
    \item Discuss the shuffle algebra $\mathcal{A}^+$, its commutative subalgebra $\mathcal{A}$ and the isomorphism $\Upsilon$ of Theorem \ref{thm:square}
    \item Give a characterization of the distinguished basis elements $\epsilon_\lambda$ of the shuffle algebra $\mathcal{A}$ via specializations which will be used in \S \ref{sec:proof} to prove Theorem \ref{thm:square}
    \item Introduce a special shuffle element $\kappa_n$ which will be used  in \S \ref{sec:proof} to prove Theorem \ref{thm:f1n} and in \S \ref{sec:proof_mainK} to prove Theorem \ref{thm:mainK}
\end{itemize}

\subsection{The shuffle algebra $\mathcal{A}^+$}
The shuffle algebra $\mathcal{A}^+$ is a vector space whose elements are symmetric rational functions of the form
\begin{align}
\label{eq:A_element}
    P(x_1,\dots ,x_n) =
    \frac{p(x_1,\dots ,x_n)}{\prod_{1\leq i<j\leq n}(x_i-x_j)^2}
    \qquad 
    p(x_1,\dots ,x_n) \in \mathbb{F}[x_1^{\pm 1},\dots , x_n^{\pm 1}]^{\mathcal{S}_n}
\end{align}
where $p(x_1,\dots ,x_n)$ satisfies the {\em wheel condition}
\begin{align}
\label{eq:p_wheel}
p(x_1,\dots,x_n) = 0
\quad \text{if} \quad 
(x_i,x_j,x_k) = (x,q_1 x, q_1 q_2 x) 
\quad \text{or} \quad
(x_i,x_j,x_k) = (x,q_2 x, q_1 q_2 x)
\end{align}
The number of arguments $(x_1,\dots,x_n)$ gives a grading
\begin{align*}
    \mathcal{A}^+ = \bigoplus_{n\geq 0} \mathcal{A}_{n}^+
\end{align*}
We set $\mathcal{A}_{0}^+ = \mathbb{F}$ and $\mathcal{A}_{1}^+ = \mathbb{F}[x^{\pm 1}]$. 
The shuffle algebra $\mathcal{A}^+$ is closed under the {\it shuffle product} $*$, defined for two elements $F \in \mathcal{A}_k^+$ and $G \in \mathcal{A}_l^+$ by
\begin{align}
\label{eq:shuffle_prod}
    (F * G)(x_1,\dots, x_{k+l}) 
    = \frac{1}{k!l!}\text{Sym}\left[ 
    F(x_1,\dots, x_{k}) G(x_{k+1},\dots, x_{k+l}) 
    \prod_{\substack{ 1\leq i\leq k\\ 1\leq j\leq l}}
    \omega(x_{k+j},x_i)
    \right]
\end{align}
where $\text{Sym}$ is the symmetrization over all arguments $x_i$
\begin{align}
    \label{eq:Sym}
    \text{Sym} \left[ f(x_1,\dots, x_{n})\right]
    =  \sum_{w \in \mathcal{S}_n} f(x_{w(1)},\dots, x_{w(n)})
\end{align}
and $\omega(x,y)$ is defined as follows 
\begin{align}
    \label{eq:omega}
    \omega(x,y) = \frac{(x-q_1 y)(x-q_2 y)(x-q_3 y)}{(x-y)^3}
\end{align}
It was shown in \cite{Ng} that the algebra $\mathcal{A}^+$ is generated by the elements $x^i \in \mathcal{A}_1^+$ for $i\in \mathbb{Z}$.

\subsection{The commutative subalgebra $\mathcal{A}$}\label{sec:commut_A}
Consider a subalgebra $\mathcal{A}\subset \mathcal{A}^+$ of the elements $P\in \mathcal{A}_n^+$ for which the two limits
\begin{align}
\label{eq:limit1}
    &\lim_{\xi \rightarrow 0}P(\xi x_1,\dots ,\xi x_{r},  x_{r+1}, \dots , x_{n}),\\
\label{eq:limit2}
    &\lim_{\xi \rightarrow \infty}
    P(\xi x_1,\dots ,\xi x_{r},  x_{r+1}, \dots , x_{n})
\end{align}
exist and coincide for all fixed $r=1,\dots ,n$. This subalgebra splits into components of fixed degree in the same way as $\mathcal{A}^+$
\begin{align*}
    \mathcal{A} = \bigoplus_{n\in \mathbb{Z}_{\ge 0}} \mathcal{A}_{n}
\end{align*}
\begin{rmk}\label{rmk:A}
The shuffle algebras $\mathcal{A}_n$ and  $\A_n$ are isomorphic. For $P(x_1,\dots,x_n)\in \mathcal{A}_n$ and $Q(x_1,\dots,x_n)\in \A_n$, s.t. $Q\mapsto P$, we have
\begin{align}
    \label{eq:AA}
    P(x_1,\dots,x_n) = \frac{1}{V_n(x_1,\dots,x_n)}Q(x_1,\dots,x_n)
\end{align}
where $V_n=V_n(x_1,\dots,x_n)$ is defined by
\begin{equation}\label{eq:defV}
    V_n := \prod_{1\leq i\neq j\leq n} (1-x_i/x_j)
\end{equation}
\end{rmk}
\begin{proof}
The product $V_n^{-1}Q(x_1,\dots,x_n)$ is of the form (\ref{eq:A_element}) and the wheel conditions of $\mathcal{A}_n^+$ and $\A_n$ coincide. The fact that the limiting conditions (\ref{eq:limit1}) and (\ref{eq:limit2}) of $\mathcal{A}_n$ are equivalent to the degree constraints (\ref{eq:degbounds}) of $\A_n$ was shown in \cite{FHHSY}.
\end{proof}

Another important result of \cite{FHHSY} is the following proposition.
\begin{prop}
The algebra $(\mathcal{A}, *)$ is commutative and the dimension of the graded subspace $\mathcal{A}_n$ is equal to the number of partitions of $n$.
\end{prop}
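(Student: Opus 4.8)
The plan is to route both claims through a single family of \emph{string specializations} adapted to the wheel condition \eqref{eq:p_wheel}. Note first that finite dimensionality of $\mathcal{A}_n$ is already in hand: by Remark~\ref{rmk:A} an element of $\mathcal{A}_n$ is $V_n^{-1}Q$ with $Q\in\A_n$, and the degree bounds \eqref{eq:degbounds} confine $Q$ to the span of the finitely many monomials whose partial sums satisfy $|\sum_{j\le r} i_j|\le r(n-r)$ with total degree $0$. So the entire content is to pin this dimension to exactly the number of partitions of $n$ and to establish commutativity.

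I would define an evaluation map $\mathrm{ev}\colon \mathcal{A}_n\to \bigoplus_{\lambda\vdash n}\mathbb{F}$ as follows. To a partition $\lambda=(\lambda_1,\lambda_2,\dots)$ assign the specialization that breaks the variables into blocks, the $a$-th block being a geometric progression of length $\lambda_a$ and ratio $q_3=t$ based at an indeterminate $z_a$, and then extract a suitably regularized leading coefficient as the $z_a$ tend to generic independent values. The point of using $q_3$-progressions is that the double pole $\prod_{i<j}(x_i-x_j)^{-2}$ of \eqref{eq:A_element} would be singular along each block, but the wheel condition \eqref{eq:p_wheel} forces the numerator to vanish on exactly the forbidden triples, so the regularized value along a string is finite and computable. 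By construction $\mathrm{ev}$ lands in a space whose dimension is the number of partitions of $n$.

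The dimension statement then splits into two matching bounds. For the \emph{lower bound} I would produce, for each $m\ge 1$, a nonzero primitive element $g_m\in\mathcal{A}_m$ (the simplest element permitted by the wheel condition), form the ordered products $g_\lambda=g_{\lambda_1}*\cdots*g_{\lambda_\ell}$ for $\lambda\vdash n$, and compute $\mathrm{ev}(g_\lambda)$. Using that the cross factors $\prod\omega(x_j,x_i)$ in \eqref{eq:shuffle_prod} factor along the $q_3$-strings, I expect the matrix $\bigl(\mathrm{ev}(g_\lambda)_\mu\bigr)_{\lambda,\mu\vdash n}$ to be triangular with nonzero diagonal for the dominance order, whence the $g_\lambda$ are linearly independent and $\dim\mathcal{A}_n$ is at least the number of partitions of $n$. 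For the \emph{upper bound} I would prove that $\mathrm{ev}$ is injective: an element of $\mathcal{A}_n$ killed by all string specializations must vanish. The mechanism is an induction on $n$ in which the longest-string specialization recovers the top residue data of the numerator along a diagonal, the shorter strings recover successively lower data, and the scaling conditions \eqref{eq:limit1}--\eqref{eq:limit2} remove any residual freedom at $0$ and $\infty$; if every specialization vanishes, the controlled pole structure forces the numerator to vanish identically. Injectivity gives the reverse inequality, so $\dim\mathcal{A}_n$ equals the number of partitions of $n$ and $\mathrm{ev}$ is an isomorphism.

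Commutativity then comes for free from the same map: since $\mathcal{A}$ is closed under $*$, both $F*G$ and $G*F$ lie in $\mathcal{A}_{k+l}$, and by injectivity of $\mathrm{ev}$ it suffices to check $\mathrm{ev}(F*G)=\mathrm{ev}(G*F)$. On a string specialization the symmetrization in \eqref{eq:shuffle_prod} collapses: the factor $\omega$ of \eqref{eq:omega} tends to $1$ as the ratio of its arguments goes to $0$ or $\infty$, precisely because $q_1q_2q_3=1$, so the contributions in which an entire block of $F$'s variables sits below (resp.\ above) a block of $G$'s variables reduce to a product that is manifestly symmetric under exchanging $F$ and $G$. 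I expect the genuine obstacle to be the upper bound, i.e.\ proving that the chosen string specializations separate points of $\mathcal{A}_n$; once that injectivity is secured, both the exact dimension and commutativity follow with only bookkeeping.
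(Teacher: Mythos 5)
You should know at the outset that the paper does not prove this proposition at all: it is quoted verbatim from \cite{FHHSY}, so the only meaningful comparison is with the proof in that reference. Your outline — evaluate elements on families of geometric $q_3$-strings indexed by partitions, prove triangularity of a distinguished family $g_\lambda$ against these evaluations for the lower bound, prove that the evaluations separate points for the upper bound, and deduce commutativity by comparing evaluations of $F*G$ and $G*F$ — is in broad strokes the same specialization strategy used in \cite{FHHSY}. So the question is whether your sketch closes the argument, and it does not: the step you yourself flag as ``the genuine obstacle'' (injectivity of the evaluation map) is the entire mathematical content, and two of your supporting mechanisms are incorrect as stated. First, there is nothing to regularize: on a geometric progression of ratio $q_3=t$ with generic base, no two arguments collide, so neither the denominator $\prod_{i<j}(x_i-x_j)^{-2}$ of \eqref{eq:A_element} nor the factors $\omega$ of \eqref{eq:omega} are singular, and the specialization is plain evaluation, valued in rational functions of the string bases $z_a$ (not in $\bigoplus_{\lambda\vdash n}\mathbb{F}$ — so the assertion that your map ``by construction'' lands in a $p(n)$-dimensional space is unfounded). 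The role of the wheel condition \eqref{eq:p_wheel} is the opposite of pole cancellation: it forces \emph{extra vanishing} of the numerator on further-degenerate loci, which is what drives both the triangularity and any interpolation-type injectivity induction, in combination with the degree information carried by \emph{both} stability conditions \eqref{eq:limit1}--\eqref{eq:limit2}.

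Second, the separation property is genuinely delicate, and a vague appeal to ``longest string plus shorter strings plus scaling conditions'' will not do: for instance, the data consisting of the full-length $q_3$-string value together with all iterated limits at $0$ annihilates the nonzero element $2\epsilon_{4}-2\epsilon_{3,1}+\epsilon_{2,2}$ (immediate from Lemma~\ref{lem:epsrec}, \eqref{eq:epsA_0}, and linear independence of the $\epsilon_\lambda$), so too small a family of specializations simply fails to separate points, and one must work with the full multi-string family and a careful count of the numerator's degree after extracting the wheel-forced factors. Third, your commutativity argument conflates two different regimes: the limit $\omega(x,y)\to 1$ as $x/y\to 0,\infty$ (which does hold, because $q_1q_2q_3=1$) is irrelevant ``on a string specialization'', where all ratios are finite powers of $t$; what actually collapses the symmetrization \eqref{eq:shuffle_prod} on strings is the vanishing $\omega(tx,x)=0$, which only constrains \emph{how} each string may split between the variables of $F$ and of $G$, and the resulting expressions for the evaluations of $F*G$ and $G*F$ are not manifestly symmetric. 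To compare them one must additionally scale the string bases apart (this is where $\omega\to 1$ legitimately enters) and then invoke precisely the separation lemma you have deferred. In short: as a program your proposal tracks the cited proof, but as a proof it has a genuine gap at the injectivity step, on which both the dimension count and commutativity rest, and the two heuristics you offer in its place (pole regularization, collapse of the symmetrization at finite strings) would fail if executed literally.
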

There are several distinguished bases in the space $\mathcal{A}$. These bases are built using the shuffle elements
\begin{align}
\label{eq:epsilons}
    \epsilon_n(x_1,\dots,x_n;q_k)
    =\prod_{1\leq i<j\leq n} \frac{(x_i -q_k x_j)(x_i -q_k^{-1} x_j)}{(x_i -x_j)^2}
    \qquad k=1,2,3
\end{align}
In the following we will abbreviate them as $\epsilon_n(q_k)=\epsilon_n(x_1,\dots,x_n;q_k)$. 
Due to commutativity the products $\epsilon_i(q_k)\, *\, \epsilon_j(q_k)$ can be ordered such that $i>j$. Thus for a partition $\lambda$ the elements 
\begin{align}
\label{eq:epsilon_basis}
    \epsilon_\lambda(q_k)
    =\epsilon_{\lambda_1}(q_k) *
    \epsilon_{\lambda_2}(q_k) *
    \dots
    * \epsilon_{\lambda_{\ell(\lambda)}}(q_k)
\end{align}
give three bases, one for each $k=1,2,3$.

\subsection{Macdonald operators and the isomorphism $\Upsilon$}\label{sec:macdo}
Define the Heisenberg algebra $\mathfrak{h}$ with generators $a_{\pm n}$, $n>0$, satisfying the relations
\begin{align}
    \label{eq:Heisenberg}
    [a_m, a_n] = \delta_{m,-n} m \frac{1-q^{|m|}}{1-t^{|m|}}
\end{align}
We have two commutative subalgebras generated by $\{a_{\pm n}\}_{n>0}$ and containing the elements  $a_{\pm \lambda}= a_{\pm \lambda_1}\dots a_{\pm  \lambda_{\ell(\lambda)}}$. 
Define the Fock space $\mathcal{F}$ by introducing the vacuum vector $\ket{\emptyset}$ which is annihilated by the positive modes $a_n \ket{\emptyset} = 0$, for $n>0$. The action of the negative modes creates a basis
\begin{align}
\ket{a_\lambda} = a_{-\lambda} \ket{\emptyset}
\end{align}
for all partitions $\lambda$. 

Let $x=(x_1,x_2,\dots)$ be the alphabet for the ring of symmetric functions $\Lambda$ with the base field $\mathbb{F}$. Recall the power sum basis of $\Lambda$
\begin{align}
    \label{eq:psums}
    p_{\lambda} = p_{\lambda_1}
    \dots p_{\lambda_{\ell(\lambda)}}
    \qquad
    p_{n} = x_1^n +x_2^n + \dots
\end{align}
The Fock space $\mathcal{F}$ is isomorphic to $\Lambda$ where the isomorphism is explicitly given by
\begin{align*}
    \ket{a_{\lambda}} \mapsto p_\lambda
\end{align*}
In  $\Lambda$ a symmetric function is considered in the power sum basis and the above isomorphism replaces it with a vector in the Fock space. A distinguished basis in $\Lambda$ is given by the Macdonald symmetric functions $P_\lambda$ (see \cite{Macdonald2}). We view it as a basis in the Fock space denoted by $\ket{P_\lambda}$. 

The Macdonald operators are defined as the operators which act diagonally on  $P_\lambda$. The {\it first} Macdonald operator $E$ \cite[Ch. \textrm{VI}]{Macdonald2} acts as follows
\begin{align}
    E \ket{P_\lambda}= \sum_{i\geq 1} (q^{\lambda_i}-1) t^{-i} \ket{P_\lambda}
\end{align}
This operator is realized on the Fock space with the help of the vertex operator \cite{Sh}
\begin{align}
    \eta(z) : = \exp \left( \sum_{n>0} 
    \frac{1-t^{-n}}{n} a_{-n} z^n \right)
    \exp \left(-  \sum_{n>0} 
    \frac{1-t^{n}}{n} a_{n} z^{-n} \right)
\end{align}
The Fourier modes of this operator $\eta_n$ ($n\in \mathbb{Z}$) are given by
\begin{align*}
    \eta(z) = \sum_{n\in \mathbb{Z}} \eta_n z^{-n}
\end{align*}
and have a well defined action on $\mathcal{F}$. The operator $E$ is given by the zero mode \cite{Sh}
\begin{align}
    \eta_0 = (t-1) E + 1
\end{align}
We can write the eigenvalue equation as follows 
\begin{align*}
\frac{t^{-1}}{1-t^{-1}}\eta_0 \ket{P_\lambda}= 
u_\lambda(e_1)\ket{P_\lambda}
\end{align*}
where 
$$u_\lambda(x_i)= q^{\lambda_i}t^{-i}
$$ 
and $e_1$ is the first elementary symmetric function in the infinite alphabet $e_1 = x_1 +x_2 + \dots$. One can build higher order operators in a systematic way using the algebra $\mathcal{A}$ and the operators $\eta$. 

The product of Heisenberg operators is {\it normally ordered} if all positive modes are on the right and negative modes on the left. The product $\eta(z) \eta(w)$ can be ordered at the expense of a rational function 
\begin{align}
\eta(z) \eta(w)     
=
\frac{(1-w/z)(1-q t^{-1} w/z)}{(1-q w/z)(1-t^{-1}w/z)}
:\eta(z) \eta(w):
\end{align}
where the operators inside $::$ are normally ordered. 
For $f\in \mathcal{A}_n$ define the mapping $\mathcal{O}: \mathcal{A}\rightarrow \text{End}_{\mathbb{F}}(\mathcal{F})$ by
\begin{align}
    \label{eq:Omap0}
    \mathcal{O}(f)=\frac{1}{(t-1)^n n!}\oint \frac{\text{d}z_1 \cdots \text{d}z_n}{z_1 \cdots z_n}
    \frac{f(z_1,\dots, z_n)}{\epsilon_n(z_1,\dots,z_n;q_1)\epsilon_n(z_1,\dots,z_n;q_3)}
    :\eta(z_1)\dots \eta(z_n):
\end{align}
where the integration contours are given by $|z_1|=\dots = |z_n|=1$ and $|t|, |q| <1$. It was shown in \cite{FHHSY} that $\mathcal{O}$ is compatible with the shuffle product, i.e.
\begin{align}
    \label{eq:Ofg}
    \mathcal{O}(f * g)=\mathcal{O}(f)\mathcal{O}(g)
\end{align}
Since $\mathcal{A}$ is commutative this implies the commutativity of the operators
$[\mathcal{O}(f),\mathcal{O}(g)]=0$ for any two elements $f,g\in \mathcal{A}$. The map $\mathcal{O}$ was used in \cite{FHHSY} to define an isomorphism $\Upsilon': \mathcal{A}\rightarrow \Lambda$: 
\begin{prop}\label{prop:epsP}
Given $f\in \mathcal A_n$, there exists an $S\in \Lambda_n$ such that for any partition $\lambda$,
\begin{align}
\label{eq:Omap}
     \mathcal{O}(f)\ket{P_\lambda}&=
     u_\lambda(S)
    \ket{P_\lambda}
\end{align}
and $\Upsilon': f\mapsto S$ thus defined is a ring isomorphism from $\mathcal A$ to $\Lambda$.
\end{prop}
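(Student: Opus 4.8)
The plan is to exploit that $\mathcal{O}$ is a ring homomorphism (equation \eqref{eq:Ofg}) whose image is a commutative family of operators simultaneously diagonalized by the Macdonald basis $\ket{P_\lambda}$, and then to translate the joint spectrum into symmetric functions through the specialization $u_\lambda$.

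First I would establish simultaneous diagonalization. The element $1\in\mathcal{A}_1=\FF$ gives $\mathcal{O}(1)=\tfrac{1}{t-1}\eta_0$, since the $\epsilon$-factors in \eqref{eq:Omap0} are empty at $n=1$ and $\oint\tfrac{dz}{z}\eta(z)=\eta_0$; by the stated eigenvalue equation for $\eta_0$ (using $\tfrac{t^{-1}}{1-t^{-1}}=\tfrac{1}{t-1}$) it acts on $\ket{P_\lambda}$ with eigenvalue $u_\lambda(e_1)=\sum_{i\ge1}q^{\lambda_i}t^{-i}$. These eigenvalues separate partitions: matching coefficients of $t^{-i}$ forces $q^{\lambda_i}=q^{\mu_i}$ for all $i$, hence $\lambda=\mu$. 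Thus $\mathcal{O}(1)$ has simple spectrum on $\mathcal{F}$, and since $\mathcal{A}$ is commutative, \eqref{eq:Ofg} gives $[\mathcal{O}(f),\mathcal{O}(1)]=0$ for every $f$, so each $\mathcal{O}(f)$ is diagonal in the basis $\ket{P_\lambda}$, say $\mathcal{O}(f)\ket{P_\lambda}=c_\lambda(f)\ket{P_\lambda}$. Again by \eqref{eq:Ofg}, the assignment $\Theta\colon f\mapsto(\lambda\mapsto c_\lambda(f))$ is a ring homomorphism from $\mathcal{A}$ to the ring of $\FF$-valued functions on partitions, with pointwise operations.

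Next I would pin down the eigenvalues on a generating set. The basis $\{\epsilon_\lambda(q_1)\}$ of \eqref{eq:epsilon_basis}, being the set of all $*$-products $\epsilon_{\lambda_1}(q_1)*\cdots*\epsilon_{\lambda_{\ell(\lambda)}}(q_1)$, exhibits $\mathcal{A}$ as the polynomial ring $\FF[\epsilon_1(q_1),\epsilon_2(q_1),\dots]$ on the free generators $\epsilon_r(q_1)\in\mathcal{A}_r$. The key computation is to evaluate $c_\lambda(\epsilon_r(q_1))$ by inserting $f=\epsilon_r(q_1)$ into \eqref{eq:Omap0} and performing the residue integral: one reinstates normal ordering via $\eta(z_i)\eta(z_j)=\tfrac{(1-z_j/z_i)(1-qt^{-1}z_j/z_i)}{(1-qz_j/z_i)(1-t^{-1}z_j/z_i)}\,:\!\eta(z_i)\eta(z_j)\!:$, the prefactor $\epsilon_r(q_1)\epsilon_r(q_3)$ cancels against the accumulated rational functions, and the poles picked up are controlled by $\lambda$. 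Carrying this out identifies the eigenvalue as $c_\lambda(\epsilon_r(q_1))=u_\lambda(S^{(r)})$ for an explicit $S^{(r)}\in\Lambda_r$; the case $r=1$ recovers $S^{(1)}=e_1=p_1$, and in general $S^{(r)}$ has a nonzero $p_r$-coefficient, so the $S^{(r)}$ are algebraically independent and generate $\Lambda$. This residue computation, which amounts to recognizing the stable limit of Macdonald's higher difference operators acting on $\ket{P_\lambda}$, is the main obstacle; everything surrounding it is formal.

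Finally I would assemble the isomorphism. Since each specialization $u_\lambda\colon\Lambda\to\FF$ is a ring homomorphism, the evaluation map $\mathrm{ev}\colon S\mapsto(\lambda\mapsto u_\lambda(S))$ is a ring homomorphism into the same function ring. Define $\Upsilon'\colon\mathcal{A}\to\Lambda$ to be the ring homomorphism sending $\epsilon_r(q_1)\mapsto S^{(r)}$. Then $\Theta$ and $\mathrm{ev}\circ\Upsilon'$ are ring homomorphisms agreeing on the generators $\epsilon_r(q_1)$, hence equal, so $c_\lambda(f)=u_\lambda(\Upsilon'(f))$ for all $f$. This is precisely \eqref{eq:Omap} with $S=\Upsilon'(f)$, which is homogeneous of degree $n$ whenever $f\in\mathcal{A}_n$ because $\Upsilon'$ is graded on generators. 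Moreover $\Upsilon'$ carries the free generators of $\mathcal{A}$ to algebraically independent generators of $\Lambda$, hence is a surjective graded ring homomorphism; as $\dim\mathcal{A}_n=\dim\Lambda_n$ (both equal to the number of partitions of $n$) in every degree, it is bijective, i.e.\ a graded ring isomorphism, completing the proof.
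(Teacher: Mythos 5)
Your proposal is correct in outline and follows essentially the same route as the paper, which itself does not prove this proposition from scratch but attributes it to \cite{FHHSY}: diagonalize the commuting family $\mathcal{O}(\mathcal{A})$ in the Macdonald basis, identify the eigenvalues on the polynomial generators $\epsilon_r(q_1)$ by matching $\mathcal{O}(\epsilon_r(q_1))$ with Macdonald's difference operators, and extend multiplicatively via \eqref{eq:Ofg}. Your scaffolding is in fact more explicit than the paper's in two useful ways: you justify simultaneous diagonalizability by noting that $\mathcal{O}(1)=\frac{1}{t-1}\eta_0$ has simple spectrum (the eigenvalues $\sum_{i}q^{\lambda_i}t^{-i}$ separate partitions over $\QQ(q,t)$, since the $t^{-i}$ are linearly independent over $\QQ(q)$), and you observe that for the isomorphism one only needs each $S^{(r)}$ to carry a nonzero $p_r$-coefficient — so you can dispense with the Wronski relations \eqref{eq:Wronski}, which the paper invokes to pin down the exact eigenvalues \eqref{eq:Ueps} for $k=2,3$; that sharper statement (in particular $\Upsilon'(\epsilon_n(q_3))=e_n^{(3)}$) is needed later for Theorem~\ref{thm:square} but not for this proposition. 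The caveat is that the one genuinely non-formal input — the residue/normal-ordering computation showing that $\mathcal{O}(\epsilon_r(q_1))$ acts on $\ket{P_\lambda}$ by $u_\lambda(S^{(r)})$ with $S^{(r)}$ independent of $\lambda$ (in fact $S^{(r)}=e_r$, i.e.\ $\mathcal{O}(\epsilon_r(q_1))$ is the stable $r$-th Macdonald operator) — is asserted rather than carried out in your write-up, and both the existence of $S$ and the nonvanishing of its $p_r$-coefficient rest on it. Since this is exactly the step the paper outsources to \cite{FHHSY}, your proposal matches the paper's level of rigor and its overall strategy, but it would need that computation (or the citation) supplied to be a complete proof.
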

 By inserting a $V_n^{-1}$ in (\ref{eq:Omap}), cf Remark~\ref{rmk:A}, we obtain the isomorphism $\Upsilon$ from $\A$ to $\Lambda$ that is stated in the introduction and used in Theorem~\ref{thm:square}. 
 The symmetric functions in $\Lambda$ which correspond to the shuffle elements $\epsilon_n(q_k)$, $k=1,2,3$ are all given by a plethystic substitution applied to the elementary symmetric functions $e_n$. Recalling \eqref{eq:defsigma} we set 
 \begin{align}
\label{eq:esp}
    e_n^{(1)} = e_n
    \qquad
    e_n^{(2)} = \sigma_{q^{-1}}\sigma^{-1}_{qt^{-1}}(e_n)
    \qquad
    e_n^{(3)} =\sigma_{q^{-1}}\sigma_t^{-1}(e_n)
    \qquad
\end{align}
Note that the plethystic substitution in $e_n^{(3)}$ appears in the statement of Theorem~\ref{thm:square}. This element will play a special role. We have \cite{FHHSY}
\begin{equation}\label{eq:Ueps}
\Upsilon'(\epsilon_n(q_k))=e^{(k)}_n\qquad k=1,2,3
\end{equation}
or equivalently
\begin{align}
\label{eq:epsP}
     \mathcal{O}(\epsilon_n(q_k))\ket{P_\lambda}&=
     u_\lambda(e^{(k)}_n)
    \ket{P_\lambda}
    \qquad k=1,2,3
\end{align}
The eigenvalue equation (\ref{eq:epsP}) for $k=1$ shows that the operator $\mathcal{O}(\epsilon_n(q_1))$ coincides with the $n^{\rm th}$ Macdonald operator \cite{Macdonald2}. Thus one shows (\ref{eq:epsP}) by matching the Macdonald difference operators $E_n$ with $\mathcal{O}(\epsilon_n(q_1))$. Based on this result one can show the other two eigenvalue equations for $k=2,3$ using the Wronski relations
\begin{align}
\label{eq:Wronski}
    \sum_{j=0}^n (q_k^j-q_l^{n-j})\frac{(1-q_l)^j}{(1-q_k)^j}
    \epsilon_{n-j}(q_k) * \epsilon_{j}(q_l) 
    =0
    \qquad
    k,l=1,2,3
\end{align}
In \cite{FHHSY} the authors proved these Wronski relations. Applying $\mathcal{O}$ to (\ref{eq:Wronski}) with $l=1,k=2,3$ gives a relation between the eigenvalues of $\mathcal{O}(\epsilon_n(q_1))$ and $\mathcal{O}(\epsilon_n(q_k))$ which determines the eigenvalue in (\ref{eq:epsP}) for $k=2,3$.

We finish this section by giving a lemma which will be used at a later stage.
\begin{lem}
\label{lem:recs}
The following identities hold 
\begin{align}
    \label{eq:epsilon_rec}
        \left(x_1+\dots + x_n \right) \epsilon_n(q_k) &= \frac{1}{1-q_k}
    \left(
    x_1 * \epsilon_{n-1}(q_k) - q_k  \epsilon_{n-1}(q_k) * x_1
    \right)
    \qquad k=1,2,3
\end{align}
\end{lem}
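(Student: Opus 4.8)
The plan is to expand both shuffle products on the right-hand side explicitly, factor out $\epsilon_n(q_k)$, and reduce the claim to a one-variable rational-function identity that I will settle by a residue computation. First I would unfold the definition \eqref{eq:shuffle_prod}, writing $x_1$ for the element $z\mapsto z$ of $\mathcal A_1$. In $x_1*\epsilon_{n-1}(q_k)$ the summand $x_1\,\epsilon_{n-1}(x_2,\dots,x_n;q_k)\prod_{j=2}^{n}\omega(x_j,x_1)$ is invariant under the $\mathcal S_{n-1}$ permuting $x_2,\dots,x_n$, so dividing the symmetrization by $(n-1)!$ collapses it to a sum over the choice of the distinguished variable:
\[
x_1*\epsilon_{n-1}(q_k)=\sum_{i=1}^{n}x_i\,\epsilon_{n-1}(x_{\neq i};q_k)\prod_{j\neq i}\omega(x_j,x_i),
\]
and likewise $\epsilon_{n-1}(q_k)*x_1=\sum_{i=1}^{n}x_i\,\epsilon_{n-1}(x_{\neq i};q_k)\prod_{j\neq i}\omega(x_i,x_j)$, where $x_{\neq i}$ is the alphabet with $x_i$ removed; the sole difference between the two is the order of the arguments of $\omega$.

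Next I would use the factorization $\epsilon_n(q_k)=\phi_i\,\epsilon_{n-1}(x_{\neq i};q_k)$ with $\phi_i=\prod_{j\neq i}\frac{(x_i-q_kx_j)(x_i-q_k^{-1}x_j)}{(x_i-x_j)^2}$, valid since each pair-factor of $\epsilon_n$ is symmetric in its two arguments. Writing $\{k,l,m\}=\{1,2,3\}$ and using $q_1q_2q_3=1$ (so $q_k^{-1}=q_lq_m$) together with the elementary identity $x_j-q_kx_i=-q_k(x_i-q_k^{-1}x_j)$, the ratios $\prod_{j\neq i}\omega(x_j,x_i)/\phi_i$ and $\prod_{j\neq i}\omega(x_i,x_j)/\phi_i$ simplify to
\[
P_i^{(1)}=\prod_{j\neq i}\frac{(x_i-q_l^{-1}x_j)(x_i-q_m^{-1}x_j)}{(x_i-x_j)(x_i-q_kx_j)},\qquad P_i^{(2)}=\prod_{j\neq i}\frac{(x_i-q_lx_j)(x_i-q_mx_j)}{(x_i-x_j)(x_i-q_k^{-1}x_j)}.
\]
After cancelling the common factor $\epsilon_n(q_k)$, the lemma becomes the purely rational identity $\sum_{i=1}^{n}x_i\bigl(P_i^{(1)}-q_kP_i^{(2)}\bigr)=(1-q_k)(x_1+\dots+x_n)$.

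Finally I would prove this identity by the residue theorem applied on $\PP^1$ to the auxiliary function $A(z)=\prod_{j=1}^{n}\frac{(z-q_l^{-1}x_j)(z-q_m^{-1}x_j)}{(z-x_j)(z-q_kx_j)}$, which has only simple poles, at $z=x_i$ and $z=q_kx_i$ (no pole at the origin), and tends to $1$ at infinity. Writing $E=(1-q_l)(1-q_m)$ and using $(1-q_l^{-1})(1-q_m^{-1})=q_kE$ (which follows from $q_lq_m=q_k^{-1}$), a direct evaluation gives
\[
\operatorname{Res}_{z=x_i}A=\frac{q_kE}{1-q_k}\,x_iP_i^{(1)},\qquad \operatorname{Res}_{z=q_kx_i}A=-q_k\cdot\frac{q_kE}{1-q_k}\,x_iP_i^{(2)},
\]
so the two pole families assemble precisely into $\frac{q_kE}{1-q_k}\sum_i x_i(P_i^{(1)}-q_kP_i^{(2)})$. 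Vanishing of the total residue sum equates this with $-\operatorname{Res}_{z=\infty}A$, and expanding $A$ to order $1/z$ yields $\operatorname{Res}_{z=\infty}A=(q_l^{-1}+q_m^{-1}-1-q_k)(x_1+\dots+x_n)=-q_kE\,(x_1+\dots+x_n)$; dividing through gives the claim. The main obstacle is exactly arranging that the residues at $x_i$ and at $q_kx_i$ carry prefactors in the ratio $-q_k$, so that they combine into the distinguished combination $P_i^{(1)}-q_kP_i^{(2)}$ rather than into two unrelated sums; this proportionality is precisely what the relations $q_1q_2q_3=1$ guarantee, and it is the heart of the computation.
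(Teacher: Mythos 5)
Your proposal is correct, and I verified the computations: the collapse of the symmetrization, the identities $(1-q_l^{-1})(1-q_m^{-1})=q_kE$ and $q_k-q_l^{-1}=q_k(1-q_m)$ used in the residues, the identification of the product over $j\neq i$ at $z=q_kx_i$ with $P_i^{(2)}$, and the evaluation $\operatorname{Res}_{z=\infty}A=-q_kE\,(x_1+\dots+x_n)$ all check out, so the residue theorem does yield $\sum_i x_i\bigl(P_i^{(1)}-q_kP_i^{(2)}\bigr)=(1-q_k)\sum_i x_i$ as claimed. The first half of your argument coincides with the paper's proof: the authors likewise unfold the shuffle products, collapse the symmetrization to a sum over the distinguished variable, and factor out $\epsilon_n(q_k)$, arriving at exactly the same rational-function identity (written there for $k=1$, with the $\omega$-ratios in an equivalent form). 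The final step, however, is genuinely different. The paper argues indirectly: the leftover factor is a symmetric rational function whose apparent poles at $x_i=x_j$ cancel by symmetry and whose residues at $x_i=q_k^{-1}x_j$ vanish (asserted as ``easily verified''), so it must be a homogeneous symmetric polynomial of degree $1$, hence a multiple $C\,e_1$, and $C=1$ is fixed by specializing $x_i=0$ for $i>1$. You instead prove the identity directly by summing residues of the auxiliary function $A(z)$ on $\mathbb{P}^1$. The paper's route is shorter but leaves the pole-cancellation check implicit and requires a separate normalization step; yours is fully explicit in one stroke, treats all $k$ uniformly via the $\{k,l,m\}$ notation, and makes transparent exactly where the constraint $q_1q_2q_3=1$ enters (in arranging the $-q_k$ ratio between the two residue families). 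Both are complete proofs; yours would be a perfectly acceptable, and in some respects more self-contained, substitute.
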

\begin{proof}
For simplicity we fix $k=1$. Let us denote the right hand side of (\ref{eq:epsilon_rec}) by $C_n$ and rewrite it using the definition of the shuffle product (\ref{eq:shuffle_prod})
\begin{align*}
C_n=
&\frac{1}{1-q_1}
 \sum_{i=1}^n 
x_i  \epsilon_{n-1}(q_1)[\hat{x_i}]\\
&\left(
\prod_{\substack{j=1 \\ j\neq i}}^n \frac{(x_j-q_1 x_i)(x_j- q_2 x_i)
(x_j- q_3 x_i)}{(x_j-x_i)^3}
-
q_1
\prod_{\substack{j=1 \\ j\neq i}}^n \frac{(x_i-q_1 x_j)(x_i- q_2 x_j)
(x_i- q_3 x_j)}{(x_i-x_j)^3}
\right)
\end{align*}
where $\epsilon_{n-1}(q_1)[\hat{x_i}]$ depends on the $n-1$ arguments $(x_1,\dots,x_{i-1},x_{i+1},\dots, x_n)$. 
Noting that 
\begin{align*}
    \epsilon_{n-1}(q_1)[\hat{x_i}]
    \prod_{\substack{j=1 \\ j\neq i}}^n \frac{(x_j-q_1 x_i)}{(x_j-x_i)}
    &=\epsilon_{n}(q_1)
    \prod_{\substack{j=1 \\ j\neq i}}^n \frac{(x_j-x_i)}{(x_j-q_1^{-1} x_i)}
    \\
    \epsilon_{n-1}(q_1)[\hat{x_i}]
    \prod_{\substack{j=1 \\ j\neq i}}^n \frac{(x_i-q_1 x_j)}{(x_i-x_j)}
    &=\epsilon_{n}(q_1)
    \prod_{\substack{j=1 \\ j\neq i}}^n \frac{(x_i-x_j)}{(x_i-q_1^{-1} x_j)}
\end{align*}
leads to 
\begin{align*}
C_n=
\epsilon_{n}(q_1)
\frac{1}{1-q_1}
 \sum_{i=1}^n 
x_i
\left(
\prod_{\substack{j=1 \\ j\neq i}}^n \frac{(x_j- q_2 x_i)
(x_j- q_3 x_i)}{(x_j-q_1^{-1} x_i)(x_j-x_i)}
-
q_1
\prod_{\substack{j=1 \\ j\neq i}}^n \frac{(x_i- q_2 x_j)
(x_i- q_3 x_j)}{(x_i-q_1^{-1} x_j)(x_i-x_j)}
\right)
\end{align*}
The factor next to $\epsilon_n(q_1)$ does not have poles at $x_i=x_j$ or $x_i=q_1^{-1} x_j$ because of the symmetry in $x_i$ and vanishing residues at $x_i=q_1^{-1} x_j$ which can be easily verified. It means that this factor must be a symmetric polynomial of degree $1$. Thus it is proportional to the elementary symmetric polynomial $e_1$ in $n$ arguments
\begin{align*}
\frac{1}{1-q_1}
 \sum_{i=1}^n 
x_i 
\left(
\prod_{\substack{j=1 \\ j\neq i}}^n \frac{(x_i- q_2 x_j)
(x_i- q_3 x_j)}{(x_i-q_1^{-1} x_j)(x_i-x_j)}
-
q_1\prod_{\substack{j=1 \\ j\neq i}}^n \frac{(x_j- q_2 x_i)
(x_j- q_3 x_i)}{(x_j-q_1^{-1} x_i)(x_j-x_i)}
\right)=C \sum_{i=1}^n  x_i
\end{align*}
We need to show that the proportionality factor $C=1$. This can be verified by setting $x_i=0$ for  $i>1$ in the above equation. This completes the proof of (\ref{eq:epsilon_rec}).
\end{proof}

\subsection{Shuffle elements of $\mathcal{A}$ and specializations}
In this section we look at the problem of determining the shuffle algebra elements of $\mathcal{A}$ recursively via specializations of the arguments $x_i$. This problem was  investigated in \cite{Ng,Ng-Pieri} using a coproduct map of $\mathcal{A}^+$. Our treatment of this problem is appropriate for the goal of matching shuffle elements with partition functions in \S \ref{sec:proof}. 

Recall that the shuffle elements of $\mathcal{A}$ have the form 
\begin{align}
\label{eq:A_element_X}
    P(x_1,\dots ,x_n) =
    \frac{p(x_1,\dots ,x_n)}{\prod_{1\leq i<j\leq n}(x_i-x_j)^2}
\end{align}
where $p(x_1,\dots ,x_n)$ is a symmetric polynomial. We start with a lemma which gives us information about the degree of $p(x_1,\dots ,x_n)$.

\begin{lem}\label{lem:degA}
Let $P(x_1,\dots,x_n)$ be an element of $\mathcal{A}_n$ as in (\ref{eq:A_element_X}). Let $x$ be an indeterminate and $c_1,\dots,c_r\in \mathbb{F}$ then $p(c_1 x ,\dots,c_r x ,x_{r+1},\dots,x_n)\in \mathbb{F}[x,x_{r+1},\dots,x_n]$ is of the form 
\begin{align}
\label{eq:pspec}
x^{r(r-1)} G(x,x_{r+1},\ldots,x_n;c_1,\dots,c_r)
\end{align}
and  $G(x,x_{r+1},\ldots,x_n;c_1,\dots,c_r)$ is a polynomial of degree at most $2r(n-r)$ in $x$.
\end{lem}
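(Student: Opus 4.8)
The plan is to extract the two claimed bounds directly from the defining limit conditions \eqref{eq:limit1}--\eqref{eq:limit2} of $\mathcal A_n$. Since $P\in\mathcal A_n$ is written as in \eqref{eq:A_element_X} with $p$ a genuine symmetric polynomial (that $p$ has only nonnegative exponents follows from $P=V_n^{-1}Q$ of Remark~\ref{rmk:A} together with the $r=1$ case of the degree bounds \eqref{eq:degbounds}), the substitution $x_i\mapsto c_ix$ produces a bona fide polynomial, and the power of $x$ in each resulting monomial is the partial degree $\sum_{i\le r}a_i$ of the corresponding monomial $\prod_i x_i^{a_i}$ of $p$. I would therefore first show that these partial degrees all lie in the window $[\,r(r-1),\,r(r-1)+2r(n-r)\,]$, and then specialize.

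To this end I would consider the one-parameter family $P(\xi x_1,\dots,\xi x_r,x_{r+1},\dots,x_n)$ with $x_1,\dots,x_n$ kept as free variables, i.e.\ as a rational function of $\xi$ over $\FF(x_1,\dots,x_n)$. Substituting $x_i\mapsto\xi x_i$ ($i\le r$) into the denominator $\prod_{i<j}(x_i-x_j)^2$ splits it into three groups: the $\binom r2$ pairs inside $\{1,\dots,r\}$ contribute $\xi^{r(r-1)}\prod_{i<j\le r}(x_i-x_j)^2$; the $r(n-r)$ mixed pairs contribute $\prod_{i\le r<j}(\xi x_i-x_j)^2$; and the pairs inside $\{r+1,\dots,n\}$ are $\xi$-independent. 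Reading off lowest and highest powers of $\xi$ (and checking that the relevant leading coefficients, products of $x_j^2$ or $x_i^2$ with Vandermonde factors, are nonzero in $\FF(x_1,\dots,x_n)$), the denominator has $\xi$-order exactly $r(r-1)$ at $\xi=0$ and exactly $r(r-1)+2r(n-r)$ at $\xi=\infty$. On the numerator side, writing $p=\sum_a p_a\prod_i x_i^{a_i}$ and collecting powers of $\xi$, the coefficient of $\xi^{k}$ is $\sum_{a:\sum_{i\le r}a_i=k}p_a\prod_i x_i^{a_i}$; distinct multi-indices give distinct monomials, so this is nonzero precisely when $p$ has a monomial with $\sum_{i\le r}a_i=k$. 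Hence the lowest and highest $\xi$-degrees of $p(\xi x_1,\dots,\xi x_r,x_{r+1},\dots,x_n)$ are $m_0:=\min_a\sum_{i\le r}a_i$ and $m_\infty:=\max_a\sum_{i\le r}a_i$. Now existence of the limit \eqref{eq:limit1} (no pole at $\xi=0$) forces $m_0-r(r-1)\ge 0$, and existence of \eqref{eq:limit2} (no pole at $\xi=\infty$) forces $\bigl(r(r-1)+2r(n-r)\bigr)-m_\infty\ge 0$; that is, $r(r-1)\le m_0$ and $m_\infty\le r(r-1)+2r(n-r)$.

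Finally I would specialize $x_i=c_ix$ for $i\le r$: each monomial of $p$ becomes $p_a\bigl(\prod_{i\le r}c_i^{a_i}\bigr)\,x^{\sum_{i\le r}a_i}\prod_{j>r}x_j^{a_j}$, whose power of $x$ lies in $[m_0,m_\infty]\subseteq[\,r(r-1),\,r(r-1)+2r(n-r)\,]$. Summation can only cancel terms, so the lowest $x$-degree stays $\ge r(r-1)$ and the highest $\le r(r-1)+2r(n-r)$; thus $x^{r(r-1)}$ divides $p(c_1x,\dots,c_rx,x_{r+1},\dots,x_n)$ and the quotient $G$ has degree at most $2r(n-r)$ in $x$, as claimed. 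The computation is short; the only point requiring care — the main, and rather minor, obstacle — is verifying that the denominator's leading coefficients at $\xi=0$ and $\xi=\infty$ are nonvanishing, so that its orders are \emph{exactly} $r(r-1)$ and $r(r-1)+2r(n-r)$ rather than mere inequalities, and correspondingly that no cancellation in the numerator spoils the identification of $m_0$ and $m_\infty$ with the extremal partial degrees.
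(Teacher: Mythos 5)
Your proof is correct and follows essentially the same route as the paper's: both extract the divisibility by $x^{r(r-1)}$ from the $\xi\to 0$ condition \eqref{eq:limit1} and the degree bound $2r(n-r)$ from the $\xi\to\infty$ condition \eqref{eq:limit2}, by comparing with the exact orders of the specialized denominator at $0$ and $\infty$. Your only addition is the (welcome) extra care of first working over $\FF(x_1,\dots,x_n)$ with generic $\xi$ to pin down the extremal partial degrees of $p$ before setting $x_i=c_i x$, which rules out cancellation issues and covers degenerate choices of the $c_i$ (e.g.\ coincident values), a point the paper's terser argument glosses over.
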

\begin{proof}
Consider the specialization $x_k=c_k x$ $(k=1,\dots,r)$ of  (\ref{eq:A_element_X})
\begin{multline*}
P(c_1 x ,\dots,c_r x ,x_{r+1},\dots,x_n)\\
=
    \frac{
    p(c_1 x ,\dots,c_r x ,x_{r+1},\dots,x_n)
    }{x^{r(r-1)}
    \prod_{i=1}^r\prod_{j=r+1}^n(c_i x -x_j)^2
    \prod_{1\leq i<j\leq r}(c_i-c_j)^2
    \prod_{r+1\leq i<j\leq n}(x_i-x_j)^2}
\end{multline*}
The existence of the limit (\ref{eq:limit1}) implies that, for $x\rightarrow 0$, the polynomial in the numerator $p(c_1 x ,\dots,c_r x ,x_{r+1},\dots,x_n)$ contains an overall factor $x^{r(r-1)}$ to compensate the same factor in the denominator. Thus we established the form (\ref{eq:pspec}) and
\begin{multline*}
P(c_1 x ,\dots,c_r x ,x_{r+1},\dots,x_n)\\
=
    \frac{
    G(x,x_{r+1},\dots,x_n;c_1,\dots,c_r)
    }{
    \prod_{i=1}^r\prod_{j=r+1}^n(c_i x -x_j)^2
    \prod_{1\leq i<j\leq r}(c_i-c_j)^2
    \prod_{r+1\leq i<j\leq n}(x_i-x_j)^2}
\end{multline*}
The existence of the limit (\ref{eq:limit2}) implies that, for $x\rightarrow \infty$, the degree in $x$ of the polynomial $G(x ,x_{r+1},\dots,x_n)$ is controlled by $\prod_{i=1}^r\prod_{j=r+1}^n(c_i x -x_j)^2$, i.e. bounded by $2r(n-r)$. 
\end{proof}
In the following we will only be interested in the situations when $c_i=q_k^{i-1}$. 
The next statement gives a recipe for a recursive computation of shuffle elements of $\mathcal{A}$.
\begin{prop}\label{prop:P_specs}
Let $r\in\{1,\ldots,n\}$, and $P(x_1,\dots,x_n)$ be an element of $\mathcal{A}_n$ as in (\ref{eq:A_element_X}). Fix $k\in\{1,2,3\}$, then $P(x_1,\dots,x_n)$ is determined by the following specializations:
\begin{enumerate}
    \item[\normalfont{$(1.r)$}] $\lim_{x_i\to0}\cdots\lim_{x_1\to0} P(x_1,\ldots,x_i,x_{i+1},\ldots,x_n)$, $i=1,\ldots,r-1$.
    \item[\normalfont{$(2.r)$}] $P(x,q_k x,\ldots,q_k^{r-1}x,x_{r+1},\ldots,x_n)$.
\end{enumerate}
\end{prop}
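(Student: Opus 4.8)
Since $\mathcal A_n$ is finite–dimensional (its dimension is the number of partitions of $n$), the assertion ``$P$ is determined by the listed specializations'' is the injectivity of the linear map sending $P\in\mathcal A_n$ to the tuple $\bigl((1.r),(2.r)\bigr)$; equivalently, if every specialization of $P$ vanishes then $P=0$. I would prove this by a double induction, outer on $n$ and inner on $r$. The inner base case $r=1$ is immediate, for then $(1.1)$ is empty and $(2.1)$ is $P(x,x_2,\dots,x_n)=P$ itself; the outer base case $n=1$ is trivial since $\mathcal A_1=\mathbb F$. Throughout I use that $q_k$ is not a root of unity (as $q,t$ are formal), so that the $r$ points of a $q_k$–progression are distinct and the vanishing of a specialization on such a locus is genuine.

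For the inner step I would show that the level-$r$ data of $P$ determines its level-$(r-1)$ data; the inner inductive hypothesis then gives $P$. The limits $(1.(r-1))$ are literally part of $(1.r)$, so the only thing left to reconstruct is the shallower specialization
\[
\Phi(x,x_r,\dots,x_n):=P(x,q_kx,\dots,q_k^{r-2}x,x_r,x_{r+1},\dots,x_n),
\]
of which $(2.r)=\Phi\big|_{x_r=q_k^{r-1}x}$ is a single slice. I would treat this as a one–variable problem in $x_r$, with $x,x_{r+1},\dots,x_n$ as parameters. Three ingredients frame it: by Lemma~\ref{lem:degA} the numerator of $\Phi$ is a polynomial in $x_r$ whose degree lies in an explicit bounded window; as a function of $x_r$, $\Phi$ has poles only at the known points $x_r=q_k^{i-1}x$ ($i<r$) and $x_r=x_j$ ($j>r$), coming from the factor $\prod_{a<b}(x_a-x_b)^2$; and, by the defining property \eqref{eq:limit1}--\eqref{eq:limit2} of $\mathcal A$, the limits of $\Phi$ as $x_r\to0$ and $x_r\to\infty$ exist, coincide, and are read off from the $i=1$ limit in $(1.r)$ by symmetry (while $\lim_{x\to0}\Phi$ is the $i=r-1$ limit). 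These place $\Phi$ in a finite–dimensional space of rational functions of $x_r$.

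Applied to a kernel element $P$, I would argue that $\Phi\equiv0$. The principal parts of $\Phi$ at the coincidence poles $x_r=x_j$ and $x_r=q_k^{i-1}x$ are governed, through the wheel conditions \eqref{eq:p_wheel} and the coproduct/factorization structure of the shuffle algebra (cf.\ \cite{Ng,Ng-Pieri}), by shuffle elements of rank $n-1$; for a kernel element these inherit vanishing data and vanish by the \emph{outer} induction on $n$. Once all principal parts are killed, $\Phi$ has no poles in $x_r$ and finite coinciding limits at $0$ and $\infty$, hence is constant in $x_r$; that constant is the common limit, which vanishes by the kernel hypothesis. (Here the symmetry of $P$ also forces $\Phi$ to vanish at $x_r=q_k^{r-1}x$ and $x_r=q_k^{-1}x$, the two ways of completing the frozen progression $x,q_kx,\dots,q_k^{r-2}x$ to a $q_k$–progression of length $r$, providing the slack needed to absorb the low-degree ambiguity.) Thus $\Phi\equiv0$, the level-$(r-1)$ specialization $(2.(r-1))$ vanishes, and both inductions close.

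The main obstacle is the bookkeeping hidden in the third paragraph: one must verify that the degree window of Lemma~\ref{lem:degA}, the \emph{exact} orders of the poles at $x_r=x_j$ and $x_r=q_k^{i-1}x$ (this is where the wheel conditions must be invoked to see how much each naive double pole is reduced), and the residues expressed through rank-$(n-1)$ data fit together so that the one–variable reconstruction is rigid, neither under– nor over–determined. A secondary technical point is to check that the reconstructed $\Phi$, and ultimately $P$, still satisfies the wheel conditions, i.e.\ genuinely lies in $\mathcal A_n$ rather than merely in the ambient space of symmetric rational functions; this is precisely what ties the local reconstruction in $x_r$ back to the global structure, and is where the residues at the coincidences $x_r=x_j$ make the outer induction on $n$ indispensable.
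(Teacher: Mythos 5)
Your overall skeleton---induction on $r$, showing that the deeper data $(1.r),(2.r)$ reconstruct the shallower specialization, with Lemma~\ref{lem:degA} and the wheel conditions supplying the rigidity---is the same as the paper's, and your reformulation as injectivity of a linear map is harmless. But the execution of the inner step has a genuine gap, which you in fact flag yourself as ``the main obstacle'' without resolving it. Treating $\Phi(x,x_r,\dots,x_n)=P(x,q_kx,\dots,q_k^{r-2}x,x_r,\dots,x_n)$ as a one-variable problem in $x_r$ leaves you with double poles at the $n-r$ diagonal points $x_r=x_j$ \emph{and} at the $r-1$ frozen points $x_r=q_k^{i-1}x$, i.e.\ roughly $2(n-1)$ unknown principal-part coefficients, against only a handful of data: the two slices $x_r=q_k^{r-1}x$, $x_r=q_k^{-1}x$ and the coinciding limits at $0$ and $\infty$. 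The wheel conditions do not touch these poles: a wheel forces vanishing of the numerator $p$ at triples with ratios among the $q_a$ (e.g.\ at $x_r=q_{k'}^{-1}q_k^{i}x$), never at an equality $x_r=x_j$ or $x_r=q_k^{i-1}x$, so the pole orders you hope to reduce remain $2$. Nor are the residues at $x_r=x_j$ ``shuffle elements of rank $n-1$'': they are built from $p$ and its derivative restricted to the diagonal of $n$ variables, which is not an element of $\mathcal{A}_{n-1}$, and no mechanism makes a kernel element at rank $n$ hand vanishing data to them. So your outer induction on $n$ has nothing to bite on, and the one-variable reconstruction, as you set it up, is genuinely underdetermined; the appeal to the coproduct structure of \cite{Ng,Ng-Pieri} does not fill the hole.

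The paper closes exactly this hole by choosing the other variable: specialize all of $x_1,\dots,x_r$ onto the progression and study the dependence on the base point $x$. Then Lemma~\ref{lem:degA} makes the denominator completely explicit---there are no unknown principal parts at all---and bounds the numerator $G$ by degree $2r(n-r)$ in $x$; the wheel conditions strip off the $2(r-1)(n-r)$ explicit linear factors $(q_{k'}x_j-q_k^ix)(q_{k''}x_j-q_k^ix)$, since each consecutive pair of the frozen progression completes to a wheel with a free $x_j$, leaving a polynomial $H$ of degree $2(n-r)$ in $x$; and the symmetry of $P$ converts the deeper specialization $(2.r+1)$ into the $2(n-r)$ interpolation values $x=q_kx_j$ and $x=q_k^{-r}x_j$ (the two ways of extending the frozen progression through a free variable), while the iterated limit in $(1.r+1)$ supplies the value at $x=0$. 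That gives $2(n-r)+1$ points for a degree-$2(n-r)$ polynomial: exactly determined, with no residue analysis and no induction on $n$. This change of variable is the missing idea needed to salvage your draft.
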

\begin{proof}
We fix $k',k''\in \{1,2,3\}$ such that $k,k',k''$ are all distinct.
The proof is by induction on $r$. The case $r=1$ is trivial.
Now assume the property true at $r\ge1$, i.e. knowing $(1.r)$ and $(2.r)$ allows us to compute $P(x_1,\dots,x_n)$. Let us show that the specializations $(1.r+1)$ and $(2.r+1)$ allow us to compute the specializations $(1.r)$ and $(2.r)$. Since the specializations $(1.r)$ are a subset of the specializations $(1.r+1)$ we only need to focus on computing $(2.r)$
$$
P(x,q_k x,\ldots,q_k^{r-1}x,x_{r+1},\ldots,x_n)
$$
According to Lemma~\ref{lem:degA} with $c_i=q_k^{i-1}$ for $1\leq i\leq r$ we have
\begin{multline*}
P( x ,q_k x,\dots,q_k^{r-1} x ,x_{r+1},\dots,x_n)\\
=
    \frac{
    G(x,x_{r+1},\dots,x_n)
    }{
    \prod_{i=1}^r\prod_{j=r+1}^n(q_k^{i-1} x -x_j)^2
    \prod_{1\leq i<j\leq r}(q_k^{i-1}-q_k^{j-1})^2
    \prod_{r+1\leq i<j\leq n}(x_i-x_j)^2}
\end{multline*}
where $G$ is a polynomial of degree $2r(n-r)$ in $x$. 
According to the wheel conditions (\ref{eq:p_wheel}), it has the following factorization:
\[
G(x,x_{r+1},\ldots,x_n)=\prod_{i=0}^{r-2} \prod_{j=r+1}^{n}
(q_{k'} x_j- q_k^i x)(q_{k''} x_j-q_k^{i} x)
H(x,x_r,\ldots,x_n)
\]
with the degree of $H$ in $x$ being $2r(n-r)-2(r-1)(n-r)=2(n-r)$.
To determine $H$, we therefore need to know its values at $2(n-r)+1$ distinct
specializations of $x$. The value of $H$ at $x=0$ gives one specialization and can be computed from the knowledge of
$$
\lim_{x_r\to0}\cdots\lim_{x_1\to0} P(x_1,\ldots,x_r,x_{r+1},\ldots,x_n)
$$
The specialization (2), using the fact that $P$ is a symmetric polynomial, is equivalent to computing $H$ at $x=q_k^{-r}x_j$ and $x=q_k x_j$, $j=r+1,\ldots,n$.
This is the required number of specializations, which means $G$ is determined and with it $P(x,q_k x,\ldots,q_k^{r-1}x,x_{r+1},\ldots,x_n)$.
\end{proof}
Let us apply this result to the computation of the basis elements $\epsilon_\lambda(q_k)$ with $\ell(\lambda)=m$ and $\lambda_1+\dots+\lambda_m=n$. For convenience we only consider the case $k=3$ and denote by
$$
\epsilon_\lambda=\epsilon_\lambda(q_3)=\epsilon_\lambda(t)
$$
We apply Proposition \ref{prop:P_specs} with $r=n$ and $k=3$ to the problem of computing $\epsilon_\lambda$. This means that in order to determine $\epsilon_\lambda$ we need to know the values of
\begin{align}
\label{eq:eps_cond_0}
    &\lim_{x_i\to0}\cdots\lim_{x_1\to0} \epsilon_{\lambda}(x_1,\ldots,x_i,x_{i+1},\ldots,x_n),
    \qquad i=1,\dots,n-1\\
\label{eq:eps_cond_t}
    &\epsilon_{\lambda}(x,t x,\ldots,t^{n-1}x)
\end{align}
One can compute the limits in (\ref{eq:eps_cond_0}) using the following lemma.
\begin{lem}\label{lem:epsrec}
The elements $\epsilon_{\lambda_1,\dots,\lambda_m}$, with $\lambda_1+\dots+\lambda_m=n$, satisfy the recurrence relation
\begin{align}
\label{eq:epskk_rec}
    \lim_{x_n\rightarrow 0}\epsilon_{\lambda_1,\dots,\lambda_m}(x_1,\dots,x_{n-1},x_n) = 
    \sum_{i=1}^{m} 
    \epsilon_{\lambda_1,\dots,\lambda_i-1,\dots,\lambda_m}(x_1,\dots,x_{n-1})
\end{align}
\end{lem}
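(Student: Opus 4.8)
The plan is to collapse $\epsilon_\lambda$ into a single symmetrization using the associativity of the shuffle product, and then to take the limit $x_n\to0$ term by term. Iterating \eqref{eq:shuffle_prod} one writes
\[
\epsilon_{\lambda_1,\dots,\lambda_m}(x_1,\dots,x_n)=\frac{1}{\lambda_1!\cdots\lambda_m!}\,\text{Sym}\!\left[\prod_{a=1}^m\epsilon_{\lambda_a}(B_a)\prod_{1\le a<b\le m}\ \prod_{i\in B_a,\,j\in B_b}\omega(x_j,x_i)\right]
\]
where $B_1,\dots,B_m$ are the consecutive blocks of positions of sizes $\lambda_1,\dots,\lambda_m$ and $\text{Sym}$ is the sum over $\mathcal S_n$ as in \eqref{eq:Sym}. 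The integrand $\Phi$ is invariant under permuting positions within any single block (each $\epsilon_{\lambda_a}$ is symmetric and the cross-factors run over all members of a block), which is exactly what makes the prefactor $1/\prod_a\lambda_a!$ correct.

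Second, I would record the elementary limits on which everything rests. Directly from \eqref{eq:epsilons} one has $\lim_{x\to0}\epsilon_{\lambda_a}(y_1,\dots,y_{\lambda_a-1},x)=\epsilon_{\lambda_a-1}(y_1,\dots,y_{\lambda_a-1})$, since every factor containing $x$ tends to $1$. More importantly, from \eqref{eq:omega} together with $q_1q_2q_3=1$ one gets
\[
\lim_{x\to0}\omega(x,y)=\frac{(-q_1y)(-q_2y)(-q_3y)}{(-y)^3}=q_1q_2q_3=1,\qquad \lim_{x\to0}\omega(y,x)=\frac{y^3}{y^3}=1
\]
so that every cross-factor involving the vanishing variable tends to $1$, regardless of which argument it occupies. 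Note also that no individual summand develops a pole as $x_n\to0$ (the relevant denominators tend to nonzero values), so the limit may legitimately be taken inside the finite sum.

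Third comes the bookkeeping, which is the heart of the argument. I would split the sum over $\mathcal S_n$ according to the block $B_a$ containing the position $w^{-1}(n)$ occupied by $x_n$. For such a $w$, the limit of $\Phi(x_{w(1)},\dots,x_{w(n)})$ is exactly the integrand $\Phi^{(a)}$ attached to the composition $\mu^{(a)}=(\lambda_1,\dots,\lambda_a-1,\dots,\lambda_m)$ on the remaining $n-1$ variables: the block $B_a$ loses the slot holding $x_n$ and $\epsilon_{\lambda_a}\mapsto\epsilon_{\lambda_a-1}$, while all cross-factors through that slot become $1$ by the previous step. Since $\Phi^{(a)}$ is symmetric within blocks, its value is independent of which of the $\lambda_a$ positions of $B_a$ held $x_n$, so summing over those $\lambda_a$ choices and over the $(n-1)!$ arrangements of $x_1,\dots,x_{n-1}$ yields $\lambda_a\sum_{w'\in\mathcal S_{n-1}}\Phi^{(a)}(x_{w'(1)},\dots,x_{w'(n-1)})$. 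Using $\frac{\lambda_a}{\lambda_1!\cdots\lambda_m!}=\frac{1}{\lambda_1!\cdots(\lambda_a-1)!\cdots\lambda_m!}$, this block-$a$ contribution is precisely $\epsilon_{\mu^{(a)}}(x_1,\dots,x_{n-1})$, and summing over $a=1,\dots,m$ gives \eqref{eq:epskk_rec}.

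The genuinely delicate point to watch — the main obstacle — is this combinatorial matching of normalizations: one must verify that collapsing the $\mathcal S_n$-symmetrization to an $\mathcal S_{n-1}$-symmetrization converts the prefactor $1/\prod_b\lambda_b!$ into $1/\prod_b\mu^{(a)}_b!$ through exactly the multiplicity $\lambda_a$, which relies crucially on the within-block symmetry of $\Phi^{(a)}$. Everything else (the vanishing of the cross-factors via $q_1q_2q_3=1$ and the reduction $\epsilon_{\lambda_a}\to\epsilon_{\lambda_a-1}$) is a routine limit. A degenerate case worth checking explicitly is $\lambda_a=1$, where the block disappears entirely and the corresponding term is $\epsilon_\lambda$ with that part deleted, consistent with the convention $\epsilon_0=1\in\mathcal A_0$.
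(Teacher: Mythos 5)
Your proof is correct and follows essentially the same route as the paper's: both expand $\epsilon_{\lambda_1,\dots,\lambda_m}$ as an iterated shuffle product, use the elementary limits $\omega(x,0)=\omega(0,x)=1$ (via $q_1q_2q_3=1$) and $\epsilon_k(\dots,0)=\epsilon_{k-1}$, and split the symmetrization according to which block the slot of $x_n$ belongs to. The only cosmetic difference is that the paper sums over coset representatives in $\mathcal S_n/\mathcal S_{\lambda_1,\dots,\lambda_m}$, so the reduction to the smaller composition is a direct bijection of cosets, whereas your full-$\mathcal S_n$ symmetrization requires the multiplicity count $\lambda_a/\prod_b\lambda_b! = 1/\prod_b\mu^{(a)}_b!$, which you carry out correctly.
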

\begin{proof}
We start by writing $\epsilon_{\lambda_1,\dots,\lambda_m}$ explicitly as a shuffle product of $\epsilon_{\lambda_l}$
\begin{align}\label{eq:epsk1km}
    \epsilon_{\lambda_1,\dots,\lambda_m} =
    &\sum_{w\in\mathcal{S}_{n}/\mathcal{S}_{\lambda_1,\dots,\lambda_m}}
    \prod_{l=1}^m\epsilon_{\lambda_l}(x_{w(1+\sum_{i=1}^{l-1}\lambda_i)},\dots,x_{w(\sum_{i=1}^{l}\lambda_i)})
    \nonumber\\
    &\prod_{1\leq a<b\leq m}\prod_{i\in I_a,
    j\in I_b}
    \omega(x_{w(j)},x_{w(i)})
\end{align}
where we defined the subsets $I_a$ by 
\begin{equation}\label{eq:defI}
    I_a := \left\{\sum_{s=1}^{a-1} \lambda_s +1,\dots,\sum_{s=1}^a \lambda_s \right\}
\end{equation}
From definitions (\ref{eq:epsilons}) and (\ref{eq:omega}) we have two obvious identities 
\begin{align*}
    \epsilon_{k}(x_1,\dots,x_{k-1},0)=
\epsilon_{k-1}(x_1,\dots,x_{k-1})
\qquad
\omega(x,0)=\omega(0,x)=1
\end{align*}
Fix $r\in \{1,\ldots,m\}$ and consider the summands of (\ref{eq:epsk1km}) with $w$ such that $n\in w(I_r)$. The factor $\epsilon_{\lambda_r}$ will be replaced by $\epsilon_{\lambda_r-1}$ upon setting $x_n=0$ while the factors of $\omega$ containing $x_n$ will disappear. The summation over $w\in\mathcal{S}_{n}/\mathcal{S}_{\lambda_1,\ldots,\lambda_m}$ (with $n\in w(I_r)$) reduces to the summation over $w'\in\mathcal{S}_{n-1}/\mathcal{S}_{\lambda_1,\ldots,\lambda_r-1,\dots,\lambda_m}$ which is equal to the shuffle product of $\epsilon_{\lambda_1},\ldots, \epsilon_{\lambda_r-1},\dots, \epsilon_{\lambda_m}$ similarly to (\ref{eq:epsk1km}). Treating in this way every summand with $n\in w(I_r)$ for $r\in \{1,\ldots,m\}$ gives rise to the formula (\ref{eq:epskk_rec}).
\end{proof}
It is clear that computing (\ref{eq:eps_cond_0}) can be done inductively using (\ref{eq:epskk_rec}).
The specializations in (\ref{eq:eps_cond_t}) are not difficult to compute and in fact follow from more general specializations considered in \cite{FHHSY}. We have
\begin{align}
\label{eq:epsA_0}
    &\epsilon_{\lambda}(x,t x,\ldots,t^{n-1}x) =0,\qquad \lambda\neq (1,1,\dots,1)\\
\label{eq:epsB_0}
    &\epsilon_{(1,1,\dots,1)}(x,t x,\ldots,t^{n-1}x) =
    \prod_{1\leq i<j\leq n}
    \frac{(q t^i-t^j)(t^{i+1}-q t^j)(t^i-t^{j+1})}{q t(t^i-t^j)^3}
\end{align}
Let us explain (\ref{eq:epsB_0}). We use the explicit expression (\ref{eq:epsk1km}) with $\lambda_i=1$ in which case all $\epsilon_{\lambda_i}=1$ and all sets $I_a$ have a single element. Due to the vanishing of  $\omega(tx,x)=0$ only one term in the summation is non-vanishing. This term must have $x$'s with fully ordered  indices as follows
\begin{align*}
    \prod_{1\leq i<j\leq n}\omega(x_i,x_j)|_{x_k=t^{k-1}x}
\end{align*}
which is equal to the product on the right hand side of (\ref{eq:epsB_0}).

\subsection{The shuffle element \texorpdfstring{$\kappa_n$}{κ_n}}
The results of this section are gathered in the following proposition:
\begin{prop}\label{prop:kappa}
Let $\kappa_0=1 \in \mathcal{A}_0$. Each of the following formulas uniquely determines the same sequence of shuffle algebra elements $\kappa_n\in \mathcal{A}_n$:
\begin{align}
    \label{eq:kappa_ev}
    &\Upsilon'(\kappa_n)=
     \sigma_q(e_n)\\
\label{eq:Wronski_lem}
    &\kappa_n = \sum_{j=0}^{n}
    q_k^{-n} (q_k-1)^{n-j}  \kappa_j * \epsilon_{n-j}(q_k)
    \qquad k=1,2,3\\
    \label{eq:kappa_x}
    &\left(x_1+\dots + x_n \right) \kappa_n
    = \kappa_{n-1} * x_1
\end{align}
where in (\ref{eq:Wronski_lem}) any one choice $k\in \{1,2,3\}$ suffices.
\end{prop}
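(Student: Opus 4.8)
The plan is to take \eqref{eq:kappa_ev} as the \emph{definition} of the sequence, i.e. $\kappa_n:=(\Upsilon')^{-1}(\sigma_q(e_n))$, which is well defined and unique because $\Upsilon'$ is a ring isomorphism (Proposition~\ref{prop:epsP}). It then suffices to prove that this sequence satisfies both \eqref{eq:Wronski_lem} and \eqref{eq:kappa_x}, and that each of \eqref{eq:Wronski_lem} and \eqref{eq:kappa_x} admits at most one solution normalized by $\kappa_0=1$; since the sequence defined by \eqref{eq:kappa_ev} is then a solution of each, all three formulas single out the same elements.

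For \eqref{eq:Wronski_lem}, every term is a shuffle product of elements of $\mathcal A$, so the identity lives in $\mathcal A_n$ and I may apply $\Upsilon'$. Using $\Upsilon'(\kappa_j*\epsilon_{n-j}(q_k))=\sigma_q(e_j)\,e^{(k)}_{n-j}$ together with \eqref{eq:Ueps}, the relation becomes, in $\Lambda$, a convolution that I package as a generating-function statement: with $K(z)=\sum_n\sigma_q(e_n)z^n$ and $E^{(k)}(z)=\sum_n e^{(k)}_n z^n$, \eqref{eq:Wronski_lem} is equivalent to $K(z)=K(z/q_k)\,E^{(k)}\!\big((q_k-1)z/q_k\big)$. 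Taking logarithms (all three series have constant term $1$) and comparing coefficients of $p_r z^r$ reduces this to the single scalar identity $c^{(k)}_r=(-1)^{r-1}\frac{(1-q)^r}{1-q^r}\frac{1-q_k^r}{(1-q_k)^r}$, where $c^{(k)}_r$ is the plethystic coefficient in $p_r\mapsto c^{(k)}_r p_r$ defining $e^{(k)}_n$; this is exactly what \eqref{eq:esp} and \eqref{eq:defsigma} produce, uniformly for $k=1,2,3$. Uniqueness for \eqref{eq:Wronski_lem} is immediate: the term $j=n$ contributes $q_k^{-n}\kappa_n$, so the relation reads $(1-q_k^{-n})\kappa_n=\sum_{j<n}(\dots)$, and since $q_k^{n}\neq1$ over $\FF=\QQ(q,t)$ this determines $\kappa_n$ from $\kappa_0,\dots,\kappa_{n-1}$.

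The recursion \eqref{eq:kappa_x} is the main obstacle, because both sides lie in $\mathcal A^+_n$ rather than in $\mathcal A_n$ (the factor $x_1\in\mathcal A^+_1$ is not in the commutative subalgebra $\mathcal A$, where degree $1$ forces constants), so $\Upsilon'$ cannot be applied and the relation must be established inside $\mathcal A^+$. The tool I would first isolate is that multiplication by $e_1=x_1+\dots+x_m$ is a \emph{derivation} of the shuffle product: writing $\partial$ for this operation, one checks directly from \eqref{eq:shuffle_prod}, using that $e_1$ is symmetric and splits over the two blocks of variables before symmetrization, that $\partial(F*G)=(\partial F)*G+F*(\partial G)$. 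Granting this, I would prove \eqref{eq:kappa_x} by induction on $n$: apply $\partial$ to \eqref{eq:Wronski_lem} for a single fixed $k$, use the derivation rule, substitute the induction hypothesis $\partial\kappa_j=\kappa_{j-1}*x_1$ for $j<n$ and Lemma~\ref{lem:recs} for $\partial\epsilon_{n-j}(q_k)$, and isolate the $j=n$ term, which reproduces $\partial\kappa_n$. After clearing the factor $(1-q_k^{-n})$, the remaining sum must be reorganized, using associativity and the level-$(n-1)$ instance of \eqref{eq:Wronski_lem}, into $(1-q_k^{-n})\,\kappa_{n-1}*x_1$; I expect this telescoping, together with the careful tracking of the two orderings $x_1*(-)$ and $(-)*x_1$ coming from Lemma~\ref{lem:recs}, to be the genuinely delicate part. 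Uniqueness for \eqref{eq:kappa_x} is then trivial, since multiplication by the nonzero rational function $e_1$ is injective, so $\kappa_n=(\kappa_{n-1}*x_1)/(x_1+\dots+x_n)$ is forced; combined with the preceding paragraph, this shows that all three formulas determine the same sequence.
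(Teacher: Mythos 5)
Your proposal is correct, but it runs the paper's chain of implications in the opposite direction. The paper takes \eqref{eq:kappa_x} with $\kappa_0=1$ as the \emph{definition}, proves \eqref{eq:Wronski_lem} by induction using exactly your two ingredients --- Lemma~\ref{lem:recs} and the Leibniz identity stating that multiplication by $e_1$ is a derivation for $*$ --- and only then obtains \eqref{eq:kappa_ev} by applying $\Upsilon'$ to the $k=1$ case of \eqref{eq:Wronski_lem} and solving the resulting functional equation $E'(z)=E'(qz)E((1-q)z)$ by infinite iteration. You instead define $\kappa_n:=(\Upsilon')^{-1}(\sigma_q(e_n))$, verify \eqref{eq:Wronski_lem} for all three $k$ simultaneously by the direct plethystic check $c^{(k)}_r=(-1)^{r-1}\frac{(1-q)^r}{1-q^r}\frac{1-q_k^r}{(1-q_k)^r}$ (which is indeed what \eqref{eq:esp} gives under $q_1=q^{-1}$, $q_2=qt^{-1}$, $q_3=t$), and then deduce \eqref{eq:kappa_x} by induction. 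The one step you leave as an expectation --- the telescoping --- does close, and it is in essence the paper's induction read backwards: applying your derivation $\partial$ to \eqref{eq:Wronski_lem} at level $n$, the substitution $\partial\kappa_j=\kappa_{j-1}*x_1$ for $j<n$ produces, after the index shift $j\mapsto j+1$, terms $q_k^{-n}(q_k-1)^{n-j-1}\,\kappa_j*x_1*\epsilon_{n-j-1}(q_k)$ which cancel (by associativity of $*$) against the $x_1*\epsilon_{n-j-1}$ half of Lemma~\ref{lem:recs} for $j\le n-2$, leaving $-q_k^{-n}\kappa_{n-1}*x_1$ from $j=n-1$; the $\epsilon_{n-j-1}*x_1$ half resums via \eqref{eq:Wronski_lem} at level $n-1$ to $\kappa_{n-1}*x_1$, yielding $(1-q_k^{-n})\,\partial\kappa_n=(1-q_k^{-n})\,\kappa_{n-1}*x_1$, and $q_k^n\neq1$ in $\FF=\QQ(q,t)$. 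Comparing the two routes: the paper's direction needs only one generating-function computation (for $k=1$) rather than three, while yours treats the Wronski relations uniformly in $k$, makes the uniqueness statements explicit where the paper merely asserts them (the $(1-q_k^{-n})$ coefficient from the $j=n$ term; injectivity of multiplication by the nonzero rational function $e_1$; bijectivity of $\Upsilon'$), and correctly isolates the structural point that forces the argument into $\mathcal{A}^+$: since $\mathcal{A}_1=\FF$, the factor $x_1$ in \eqref{eq:kappa_x} lies outside the commutative subalgebra, so $\Upsilon'$ cannot be applied to that relation directly.
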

\begin{proof}
It is clear that each of these equations uniquely determines the element $\kappa_n$, we only need to show their consistency. We can take (\ref{eq:kappa_x}) together with $\kappa_0=1$ as the definition of $\kappa_n$ and prove that it implies (\ref{eq:Wronski_lem}) and (\ref{eq:kappa_ev}). 

The derivation of (\ref{eq:Wronski_lem}) is recursive. The case $n=0$ of (\ref{eq:Wronski_lem}) obviously holds. Using (\ref{eq:kappa_x}) we will derive (\ref{eq:Wronski_lem}) for $\kappa_n$ assuming that (\ref{eq:Wronski_lem}) holds for $n-1$. We act with $*\, x_1$ on both sides of (\ref{eq:Wronski_lem}) where $n$ is replaced by $n-1$
\begin{align}
    \label{eq:Wronski_n-1}
    \kappa_{n-1} * x_1 = \sum_{j=0}^{n-1}
    q_k^{-n+1} (q_k-1)^{n-1-j}  \kappa_j * \epsilon_{n-j-1}(q_k) * x_1
\end{align}
On the left hand side we apply (\ref{eq:kappa_x}) and on the right hand side (\ref{eq:epsilon_rec})
\begin{align*}
    \sum_{i=1}^n x_i \kappa_{n}
    = 
    &\sum_{j=0}^{n-1} q_k^{-n}
    (q_k-1)^{n-j}  \kappa_j * \left( \sum_{i=1}^{n-j} x_i \epsilon_{n-j}(q_k) \right)\\
    +
    &\sum_{j=0}^{n-1}
    q_k^{-n} (q_k-1)^{n-j-1}  \kappa_j * x_1 * \epsilon_{n-j-1}(q_k) 
\end{align*}
On the right hand side we apply (\ref{eq:kappa_x}) to $\kappa_j * x_1$
\begin{align*}
    \sum_{i=1}^n x_i \kappa_{n}
    = 
    &\sum_{j=0}^{n-1} q_k^{-n}
    (q_k-1)^{n-j}  \kappa_j * \left( \sum_{i=1}^{n-j} x_i \epsilon_{n-j}(q_k) \right)\\
    +
    &\sum_{j=0}^{n-1}
     q_k^{-n}(q_k-1)^{n-j-1} \left(\sum_{i=1}^{j+1} x_i
    \kappa_{j+1} \right) * \epsilon_{n-j-1}(q_k)
\end{align*}
In the last summation we can shift the index $j$ 
\begin{align}
\label{eq:epskap0}
\sum_{i=1}^n x_i \kappa_{n}
    = 
    &\sum_{j=0}^{n-1} q_k^{-n}
    (q_k-1)^{n-j}  \kappa_j * \left( \sum_{i=1}^{n-j} x_i \epsilon_{n-j}(q_k) \right)\\
    +
    &\sum_{j=1}^{n}
     q_k^{-n}(q_k-1)^{n-j} \left( \sum_{i=1}^{j} x_i
    \kappa_{j} \right) * \epsilon_{n-j}(q_k)  \nonumber
\end{align}
From the definition of the shuffle product (\ref{eq:shuffle_prod}) it is easy to check that
\begin{align*}
    \kappa_j * \left( \sum_{i=1}^{n-j} x_i \epsilon_{n-j}(q_k) \right)
    +
     \left( \sum_{i=1}^{j} x_i
    \kappa_{j} \right) * \epsilon_{n-j}(q_k) =
    \left(\sum_{i=1}^{n} x_i\right) 
    \kappa_{j} * \epsilon_{n-j}(q_k)
\end{align*}
Using this identity in (\ref{eq:epskap0}) for every pair of summands with $j=1,\dots,n-1$ leads to
\begin{align*}
\sum_{i=1}^n x_i \kappa_{n}
    = 
    &\sum_{i=1}^{n} x_i 
    q_k^{-n}
    (q_k-1)^{n} \kappa_0 * \epsilon_{n}(q_k)
    +
    \sum_{i=1}^{n} x_i
     q_k^{-n} 
    \kappa_{n} * \epsilon_{0}(q_k) 
    \\
    +
    &
    \sum_{i=1}^{n} x_i
    \sum_{j=1}^{n-1}
     q_k^{-n}(q_k-1)^{n-j} 
    \kappa_{j} * \epsilon_{n-j}(q_k) 
\end{align*}
After removing the common factor $\sum_{i=1}^{n} x_i$ and putting all terms together on the right hand side we obtain (\ref{eq:Wronski_lem}) for $\kappa_n$.

Next we show that (\ref{eq:kappa_ev}) holds.  For that we will use (\ref{eq:Wronski_lem}).  It is sufficient to consider $k=1$ in (\ref{eq:Wronski_lem}). We apply the map $\Upsilon'$ to this equation
\begin{align}
\label{eq:ekap0}
        &\Upsilon'(\kappa_n) = \sum_{j=0}^{n}
    q^{n} (q^{-1}-1)^{n-j}  \Upsilon'(\kappa_j) \Upsilon'(\epsilon_{n-j}(q^{-1}))
\end{align}
From (\ref{eq:Ueps}) we have $\Upsilon'(\epsilon_{n-j}(q^{-1}))=e_{n-j}$. Let us denote by $e'_n=\Upsilon'(\kappa_n)$, then (\ref{eq:ekap0}) becomes
\begin{align}
\label{eq:ekap1}
        &e'_n = \sum_{j=0}^{n}
    q^{j} (1-q)^{n-j}  e'_j e_{n-j} 
\end{align}
We need to show that $e'_n=\sigma_q(e_n)$. Note that, together with the initial condition, (\ref{eq:ekap1}) uniquely determines $e'_n$. Let us solve (\ref{eq:ekap1}) using generating functions, set 
\begin{align}
    \label{eq:gen}
    E'(z):= \sum_{n=0}^{\infty}  e'_n z^n\qquad 
    E(z):= \sum_{n=0}^{\infty}  e_n z^n
    =\exp\left( -\sum_{j>0}\frac{(-1)^j}{j} p_j z^j
    \right)
\end{align}
Multiply both sides of  (\ref{eq:ekap1}) by $z^n$ and sum over $n\geq 0$
\begin{align*}
    E'(z)=E'(q z) E((1-q)z)
\end{align*}
By iterating this equation we find 
\begin{align*}
    E'(z)=&\prod_{k=0}^{\infty} E((1-q)q^k z)
    =\exp\left( -\sum_{k\geq 0}\sum_{j>0}\frac{(-1)^j(1-q)^{j}}{j} p_j q^{j k} z^j
    \right)\\
    =&\exp\left( -\sum_{j>0}\frac{(-1)^j(1-q)^{j}}{j(1-q^j)} p_j z^j
    \right)=
    \exp\left( -\sum_{j>0}\frac{(-1)^j}{j} \sigma_q(p_j) z^j
    \right)
    =\sum_{n=0}^{\infty}  \sigma_q(e_n)z^n
\end{align*}
Comparing the last result with (\ref{eq:gen}) shows that $e'_n=\sigma_q(e_n)$.
\end{proof}

\section{Matching the partition functions with shuffle elements}\label{sec:proof}
We now investigate the partition functions associated to the elements
$A_{\lambda_1}*\cdots*A_{\lambda_m}$ (where $A_n$ was defined in \eqref{eq:defA}) that were studied in the previous section on the shuffle algebra side. 

\begin{prop}\label{prop:zerospec}
Given $\lambda_1,\ldots,\lambda_m\in \ZZ_{>0}$ with $n=\sum_{i=1}^m \lambda_i$, one has the limit
\begin{equation}\label{eq:zerospec}
\lim_{x_n\to0}x_n^{n-1}
f(A_{\lambda_1}*\cdots*A_{\lambda_m})
=(-1)^{n-1}
\prod_{i=1}^{n-1} x_i\,
\sum_{i=1}^m
f(A_{\lambda_1}*\cdots*A_{\lambda_i-1}*A_{\lambda_m})
\end{equation}
\end{prop}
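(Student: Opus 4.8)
The plan is to read the order-$(n-1)$ pole at $x_n=0$ directly off the partition-function formula $f(c)=\alpha_n\bra1 Z(x_1,\dots,x_n,qx_1,\dots,qx_n)\,c\ket1$ of Lemma~\ref{lem:latticepath}, and to show that this pole localises entirely on the last column, the one carrying $y_n=qx_n$. First I would note that by Lemma~\ref{lem:deg} the power of $x_n$ in $f(c)$ is bounded below by $-(n-1)$, so that $\lim_{x_n\to0}x_n^{n-1}f(c)$ merely extracts the coefficient of $x_n^{-(n-1)}$. Writing $Z$ as the ordered product of $R$-matrices \eqref{eq:defZ} with $\check R_i(u)=1-t+(1-u)T_i$ from \eqref{eq:defR}, the only factors that can produce a negative power of $x_n$ are the $n$ factors of the last column, $\check R(x_i/y_n)=\check R(x_i/(qx_n))$. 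As $x_n\to0$ one has $\check R(x_i/(qx_n))=-\tfrac{x_i}{qx_n}T+O(1)$ for $i<n$, while the corner factor $\check R(x_n/y_n)=\check R(1/q)$ stays finite and the row-$n$ factors $\check R(x_n/y_j)\to\check R(0)$ are finite as well. Hence the pole has order exactly $n-1$, and its leading coefficient is obtained by replacing each of the $n-1$ off-diagonal last-column $R$-matrices by its leading term $-\tfrac{x_i}{qx_n}T$ and setting $x_n=0$ elsewhere.

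This extraction produces the scalar prefactor $(-1/q)^{\,n-1}\prod_{i=1}^{n-1}x_i\,x_n^{-(n-1)}$ together with an operator in which the last column has degenerated into a product of Hecke generators dressed by the finite corner $\check R(1/q)$, and in which the surviving $(n-1)\times(n-1)$ block is the partition function $Z_{n-1}$ on $x_1,\dots,x_{n-1}$. I would then show that, once sandwiched between the boundary vectors $\bra1$ and $\ket1$ of Lemma~\ref{lem:coset}, the degenerate last column together with the corner factor acts as the operation of deleting the strand labelled $n$, up to a single overall constant to be tracked.

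The sum $\sum_{i=1}^m$ and the lowering $\lambda_i\mapsto\lambda_i-1$ then come from the central element itself. Expanding $c=A_{\lambda_1}*\cdots*A_{\lambda_m}$ by the associativity formula \eqref{eq:assoc} as $\sum_w t^{-|w|}T_w(A_{\lambda_1}\otimes\cdots\otimes A_{\lambda_m})T_{w^{-1}}$, for each shuffle $w$ the deleted strand lies in exactly one of the $m$ consecutive blocks; using the antisymmetriser identities $T_jA_k=A_kT_j=-A_k$ and the branching of the sign idempotent $A_k$ to $A_{k-1}$ upon removing a strand, the block of size $\lambda_i$ containing that strand collapses to $A_{\lambda_i-1}$ while the other blocks are untouched. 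Re-summing the residual shuffles reconstitutes, at level $n-1$, the element $A_{\lambda_1}*\cdots*A_{\lambda_i-1}*\cdots*A_{\lambda_m}$, and summing over which block received the strand gives the sum over $i$. Finally I would match constants: the ratio $\alpha_n/\alpha_{n-1}=(q/t)^{\,n-1}(1-t)^{-1}$, the $q^{-(n-1)}$ from the extracted prefactor, and the $t$-powers and $(1-t)$ generated by the corner evaluation and the antisymmetriser collapse combine to leave precisely the factor $(-1)^{n-1}\prod_{i=1}^{n-1}x_i$ of \eqref{eq:zerospec}.

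The main obstacle is this middle step: proving rigorously that the degenerate last column plus corner factor, evaluated against the fixed boundary, implements exactly the deletion of one strand together with the branching $A_{\lambda_i}\to A_{\lambda_i-1}$, and that no subleading terms of the last-column or row-$n$ $R$-matrices contaminate the coefficient of $x_n^{-(n-1)}$. I expect this to require careful diagrammatic bookkeeping of the kind used in the proof of Proposition~\ref{prop:Zshuffle}, combined with the explicit corner evaluation already encountered in \eqref{eq:w0}. As a reserve strategy I would instead first establish that $f$ intertwines the two shuffle products on antisymmetrisers, and then deduce \eqref{eq:zerospec} from pole bookkeeping on the shuffle side, exactly parallel to Lemma~\ref{lem:epsrec}: there $\tilde\omega(x)\sim -1/x$ as $x\to0$ and $\tilde\omega(x)\sim -x$ as $x\to\infty$, so each cross-factor involving $x_n$ contributes a simple pole, and the $n-\lambda_i$ cross-factors together with the order-$(\lambda_i-1)$ specialisation of the single block containing $x_n$ add up to the required order $n-1$, reproducing the same combination.
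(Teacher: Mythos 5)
Your plan is correct and follows essentially the same route as the paper's proof: the paper likewise uses Lemma~\ref{lem:deg} to reduce to extracting the coefficient of $x_n^{-(n-1)}$, localizes it on the last column and row of $Z$ — your acknowledged middle step is carried out there at the level of matrix elements $\bra{a}Z\ket{b}$ in the lattice-path representation (equation \eqref{eq:limit0}), where paths are forced straight through the last column with weights $-q^{-1}x_ix_n^{-1}$, the corner contributes $1-t$, the bottom row contributes $t^{n-1}$ and forces $a(n)=b(n)$ — and then implements exactly your strand-deletion/branching step by re-indexing the sums over $v\in\mathcal S_{\lambda_1,\ldots,\lambda_m}$ and $w\in\mathcal S^{\lambda_1,\ldots,\lambda_m}$ (with $k=w^{-1}(n)$ necessarily the last element of its block, so $|v'|=|v|$), matching constants via $\alpha_n^{-1}\alpha_{n-1}=(1-t)(t/q)^{n-1}$ just as you outline. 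One caution: your reserve strategy would be circular in this paper's logic, since the fact that $f$ intertwines the shuffle products (i.e.\ $f(A_{\lambda_1}*\cdots*A_{\lambda_m})=V_n\,\epsilon_\lambda$) is itself deduced from this proposition.
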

Note that by commutativity,
$\lambda$ can be chosen to be a partition.
\begin{proof}
We start from the definition of $A_{\lambda_1}*\cdots*A_{\lambda_m}$ (cf \eqref{eq:assoc}):
\[
A_{\lambda_1}*\cdots*A_{\lambda_m}
= \sum_{v\in \mathcal S_{\lambda_1,\ldots,\lambda_m}} \sum_{w\in\mathcal S^{\lambda_1,\ldots,\lambda_m}}
(-1)^{|v|} t^{-(|v|+|w|)} 
T_w T_v T_{w^{-1}}
\]
By definition of the groups involved, one has $T_w T_v = T_{wv}$ (and similarly $T_v T_{w^{-1}}=T_{vw^{-1}}$, but we choose to use the first identity).

Plugging this into \eqref{eq:deff2}, we have
\begin{align*}
\alpha_n^{-1} f(A_{\lambda_1}*\cdots*A_{\lambda_m})
&= \sum_{v\in \mathcal S_{\lambda_1,\ldots,\lambda_m}} \sum_{w\in\mathcal S^{\lambda_1,\ldots,\lambda_m}}
(-1)^{|v|} t^{-(|v|+|w|)} 
\left<
Z|_{y_i=qx_i} (T_w T_v T_{w^{-1}}\otimes S_n)
\right>
\\
&= \sum_{v\in \mathcal S_{\lambda_1,\ldots,\lambda_m}} \sum_{w\in\mathcal S^{\lambda_1,\ldots,\lambda_m}}
(-1)^{|v|} t^{-(|v|+|w|)} 
\left<
T_{w^{-1}} Z|_{y_i=qx_i} T_{wv}(1\otimes S_n)
\right>
\end{align*}
Using \eqref{eq:scalprod}, we can rewrite this as
\begin{align}\notag
\alpha_n^{-1}
f(A_{\lambda_1}*\cdots*A_{\lambda_m})
&=
\sum_{v\in \mathcal S_{\lambda_1,\ldots,\lambda_m}} \sum_{w\in\mathcal S^{\lambda_1,\ldots,\lambda_m}}
(-1)^{|v|} t^{-(|v|+|w|)} 
\bra{1}
T_{w^{-1}} Z|_{y_i=qx_i} T_{wv}
\ket{1}
\\\label{eq:intermlim0}
&=
\sum_{v\in \mathcal S_{\lambda_1,\ldots,\lambda_m}} \sum_{w\in\mathcal S^{\lambda_1,\ldots,\lambda_m}}
(-t)^{-|v|} 
\bra{ w^{-1}} Z|_{y_i=qx_i} \ket{v^{-1}w^{-1}}
\end{align}
This has the following interpretation: \rem[gray]{which really could be made for a general shuffle product}
$\alpha_n^{-1}f(A_{\lambda_1}*\cdots*A_{\lambda_m})$ is the sum over $v$ and $w$ of partition functions of lattice paths,
where the paths are labelled $w^{-1}(1),\ldots,w^{-1}(n)$ on the left side and
$v^{-1}(w^{-1}(1)),\ldots,v^{-1}(w^{-1}(n))$ on the top side (with an extra weight $(-t)^{-|v|}$).

We can now take the limit $x_n\to0$. In this paragraph we denote by $a=w^{-1}$, $b=v^{-1}w^{-1}$,
which are two arbitrary permutations, and consider $\bra{a}Z|_{y_i=qx_i}\ket{b}$ as $x_n\to 0$.
We are only interested for the purposes of deriving \eqref{eq:zerospec} in the terms of order $x_n^{-n+1}$;
from Lemma~\ref{lem:deg} we know that this is the minimum order in $x_n$ and in fact, as in the proof of the lemma,
we know that the only dependence on $x_n$ comes from the last row and column of the diagram of $Z$. Let us examine
these in more detail with our boundary conditions $\bra{a}$ and $\ket{b}$.
We look at the destiny of the path $b(n)$ which ends at the top right: to maximize the degree in $x_n^{-1}$, we must
pick up a $x_n^{-1}$ at each vertex $(i,n)$, $i=1,\ldots,n-1$; this means, according to \eqref{eq:wt} that the first
type of vertex (a path making a bend when the right edge is either empty or has a higher labelled path) must be excluded.
Since all right edges are empty in the last column, one can prove inductively that the paths must go straight through
every vertex $(i,n)$,  $i=1,\ldots,n-1$, incurring a weight of $-q^{-1}x_ix_n^{-1}$.
At vertex $(n,n)$, because both bottom and right edges are empty, we must have the first type of vertex in \eqref{eq:wt},
incurring a $1-t$. From there, the path can only go straight left to its destination with label $a(n)$, incurring
a weight $t^{n-1}$ and imposing $a(n)=b(n)$. Combining all this, we find
\begin{multline}\label{eq:limit0}
\bra{a} Z(x_1,\ldots,x_n,qx_1,\ldots,qx_n) \ket{b}
= \delta_{a(n),b(n)} (1-t) t^{n-1} x_n^{-n+1} 
\prod_{i=1}^{n-1} (-q^{-1}x_i)
\\\times \bra{a(1),\ldots,a(n-1)} Z(x_1,\ldots,x_{n-1},qx_1,\ldots,qx_{n-1}) \ket{b(1),\ldots,b(n-1)}
+O(x_n^{-n+2})\qquad a,b\in\mathcal S_n^{(1)}
\end{multline}

We apply this formula to $f(A_{\lambda_1}*\cdots*A_{\lambda_m})$.
We fix $w\in\mathcal S_{\lambda_1,\ldots,\lambda_m}$ and denote by $k=w^{-1}(n)$. Because of the form of $w$, $k$ must be the last element of its block $I_a$, $1\le a\le m$ ($I_a$ was defined in \eqref{eq:defI}).
According to \eqref{eq:limit0},
the summation can be restricted to $v$ such that $v(k)=k$. Consider now the permutation $w'$ with $w'(i)=w(i)$, $i=1,\ldots,k-1$, $w'(i)=w(i+1)$, $i=k,\ldots,n-1$, and
the permutation $v'$ with $v'(i)=i$ for $i=1,\ldots,k-1$ and $v'(i)=v(i+1)-1$ for $i=k,\ldots,n-1$. Because $k$ is last of its block,
$|v'|=|v|$ and we have
\begin{multline*}
\alpha_n^{-1} f(A_{\lambda_1}*\cdots*A_{\lambda_m})
\overset{x_n\to 0}{\sim} 
(1-t)t^{n-1} x_n^{-n+1} 
\prod_{i=1}^{n-1} (-q^{-1}x_i)
\sum_{v'\in\mathcal S_{\lambda_1,\ldots,\lambda_a-1,\ldots,\lambda_m}}
(-t)^{-|v'|}
\sum_{w'\in\mathcal S^{\lambda_1,\ldots,\lambda_a-1,\ldots,\lambda_m}}
\\\bra{w'^{-1}(1),\ldots,w'^{-1}(n-1)}
Z(x_1,\ldots,x_{n-1},qx_1,\ldots,qx_{n-1}) 
\ket{v^{-1}(w'^{-1}(1)),\ldots,v'^{-1}(w'^{-1}(n-1))}
\end{multline*}
Comparing with \eqref{eq:intermlim0} and using
$\alpha_n^{-1}\alpha_{n-1}=(1-t)(t/q)^{n-1}$, we finally find the desired
limit \eqref{eq:zerospec}.
\end{proof}

In what follows, we shall use the diagrammatic language.
We shall repeatedly use the following simple identities:
\begin{lem}
One has for $k\le\ell,m\ge2$:
\begin{align}\label{eq:id1}
\begin{tikzpicture}[baseline=(current  bounding  box.center),rounded corners,scale=0.8]
\node[draw=black,rectangle,minimum height=0.4cm,minimum width=1.5cm,inner sep=0pt] at (2.5,0) {$\ss A_k$};
\node[draw=black,rectangle,minimum height=0.4cm,minimum width=3cm,inner sep=0pt] at (2.5,1) {$\ss A_\ell$};
\foreach\x in {1,4}
\draw[invarrow=0.93] (\x,-0.75) -- (\x,0.75) (\x,1.25) -- (\x,1.75);
\foreach\x in {2,3}
\draw[invarrow=0.93] (\x,-0.75) -- (\x,-0.25) (\x,0.25) -- (\x,0.75) (\x,1.25) -- (\x,1.75);
\foreach\x/\y in {1.5/1.5,2.5/1.5,3.5/1.5,2.5/0.5,2.5/-0.5}
\node at (\x,\y) {$\ss\cdots$};
\end{tikzpicture}
&=
\begin{tikzpicture}[baseline=(current  bounding  box.center),rounded corners,scale=0.8]
\node[draw=black,rectangle,minimum height=0.4cm,minimum width=1.5cm,inner sep=0pt] at (2.5,1) {$\ss A_k$};
\node[draw=black,rectangle,minimum height=0.4cm,minimum width=3cm,inner sep=0pt] at (2.5,0) {$\ss A_\ell$};
\foreach\x in {1,4}
\draw[invarrow=0.93] (\x,-0.75) -- (\x,-0.25) (\x,0.25) -- (\x,1.75);
\foreach\x in {2,3}
\draw[invarrow=0.93] (\x,-0.75) -- (\x,-0.25) (\x,0.25) -- (\x,0.75) (\x,1.25) -- (\x,1.75);
\foreach\x/\y in {1.5/1.5,2.5/1.5,3.5/1.5,2.5/0.5,2.5/-0.5}
\node at (\x,\y) {$\ss\cdots$};
\end{tikzpicture}
=
[k]_{t^{-1}}!\ 
\begin{tikzpicture}[baseline=(current  bounding  box.center),rounded corners,scale=0.8]
\node[draw=black,rectangle,minimum height=0.4cm,minimum width=3cm,inner sep=0pt] at (2.5,1) {$\ss A_\ell$};
\foreach\x in {1,...,4}
\draw[invarrow=0.75] (\x,0.25) -- (\x,0.75) (\x,1.25) -- (\x,1.75);
\foreach\x/\y in {2.5/1.5,2.5/0.5}
\node at (\x,\y) {$\ss\cdots$};
\end{tikzpicture}
\\\label{eq:id2}
\begin{tikzpicture}[baseline=(current  bounding  box.center),rounded corners,scale=0.8]
\node[draw=black,rectangle,minimum height=0.4cm,minimum width=1.5cm,inner sep=0pt] at (2.5,0) {$\ss S_k$};
\node[draw=black,rectangle,minimum height=0.4cm,minimum width=3cm,inner sep=0pt] at (2.5,1) {$\ss S_\ell$};
\foreach\x in {1,4}
\draw[invarrow=0.93] (\x,-0.75) -- (\x,0.75) (\x,1.25) -- (\x,1.75);
\foreach\x in {2,3}
\draw[invarrow=0.93] (\x,-0.75) -- (\x,-0.25) (\x,0.25) -- (\x,0.75) (\x,1.25) -- (\x,1.75);
\foreach\x/\y in {1.5/1.5,2.5/1.5,3.5/1.5,2.5/0.5,2.5/-0.5}
\node at (\x,\y) {$\ss\cdots$};
\end{tikzpicture}
&=
\begin{tikzpicture}[baseline=(current  bounding  box.center),rounded corners,scale=0.8]
\node[draw=black,rectangle,minimum height=0.4cm,minimum width=1.5cm,inner sep=0pt] at (2.5,1) {$\ss S_k$};
\node[draw=black,rectangle,minimum height=0.4cm,minimum width=3cm,inner sep=0pt] at (2.5,0) {$\ss S_\ell$};
\foreach\x in {1,4}
\draw[invarrow=0.93] (\x,-0.75) -- (\x,-0.25) (\x,0.25) -- (\x,1.75);
\foreach\x in {2,3}
\draw[invarrow=0.93] (\x,-0.75) -- (\x,-0.25) (\x,0.25) -- (\x,0.75) (\x,1.25) -- (\x,1.75);
\foreach\x/\y in {1.5/1.5,2.5/1.5,3.5/1.5,2.5/0.5,2.5/-0.5}
\node at (\x,\y) {$\ss\cdots$};
\end{tikzpicture}
=
[k]_t!\ 
\begin{tikzpicture}[baseline=(current  bounding  box.center),rounded corners,scale=0.8]
\node[draw=black,rectangle,minimum height=0.4cm,minimum width=3cm,inner sep=0pt] at (2.5,1) {$\ss S_\ell$};
\foreach\x in {1,...,4}
\draw[invarrow=0.75] (\x,0.25) -- (\x,0.75) (\x,1.25) -- (\x,1.75);
\foreach\x/\y in {2.5/1.5,2.5/0.5}
\node at (\x,\y) {$\ss\cdots$};
\end{tikzpicture}
\\\label{eq:id3}
\begin{tikzpicture}[baseline=(current  bounding  box.center),rounded corners,scale=0.8]
\node[draw=black,rectangle,minimum height=0.4cm,minimum width=3cm,inner sep=0pt] at (2.5,-1) {$\ss A_m$};
\node[draw=black,rectangle,minimum height=0.4cm,minimum width=3cm,inner sep=0pt] at (2.5,1) {$\ss S_\ell$};
\foreach\x in {1,4}
\draw[invarrow=0.93] (\x,-1.75) -- (\x,-1.25) (\x,-0.75) -- (\x,-0.25) (\x,0.25) -- (\x,0.75) (\x,1.25) -- (\x,1.75);
\foreach\x in {2,3}
\draw[invarrow=0.93] (\x,-1.75) -- (\x,-1.25) (\x,-0.75) -- (\x,0.75) (\x,1.25) -- (\x,1.75);
\foreach\x/\y in {2.5/1.5,2.5/-1.5}
\node at (\x,\y) {$\ss\cdots$};
\end{tikzpicture}
&=
\begin{tikzpicture}[baseline=(current  bounding  box.center),rounded corners,scale=0.8]
\node[draw=black,rectangle,minimum height=0.4cm,minimum width=3cm,inner sep=0pt] at (2.5,-1) {$\ss S_\ell$};
\node[draw=black,rectangle,minimum height=0.4cm,minimum width=3cm,inner sep=0pt] at (2.5,1) {$\ss A_m$};
\foreach\x in {1,4}
\draw[invarrow=0.93] (\x,-1.75) -- (\x,-1.25) (\x,-0.75) -- (\x,-0.25) (\x,0.25) -- (\x,0.75) (\x,1.25) -- (\x,1.75);
\foreach\x in {2,3}
\draw[invarrow=0.93] (\x,-1.75) -- (\x,-1.25) (\x,-0.75) -- (\x,0.75) (\x,1.25) -- (\x,1.75);
\foreach\x/\y in {2.5/1.5,2.5/-1.5}
\node at (\x,\y) {$\ss\cdots$};
\end{tikzpicture}
=0
\end{align}
where $[k]_t!=\prod_{i=1}^k [i]_t$ and $[i]_t=1+\cdots+t^{i-1}$.
\end{lem}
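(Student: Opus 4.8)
The plan is to deduce all three identities from the two eigenvalue relations
\[
T_i A_n = A_n T_i = -A_n, \qquad T_i S_n = S_n T_i = t\,S_n \qquad (1\le i\le n-1),
\]
which express that the antisymmetrizer (resp.\ symmetrizer) absorbs each generator with eigenvalue $-1$ (resp.\ $t$) --- the precise form of the ``primitive idempotent up to normalization'' statement noted after \eqref{eq:defA}. I would first establish these from Lemma~\ref{lem:basic} by pairing each $w$ with $s_i w$ in the defining sums $A_n=\sum_w(-t)^{-|w|}T_w$ and $S_n=\sum_w T_w$: using $T_i^2=(t-1)T_i+t$, the two-term contribution of every pair $\{w,s_iw\}$ reassembles exactly into its contribution to $-A_n$ (resp.\ $t\,S_n$). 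Iterating over a reduced word then yields $A_n T_w=(-1)^{|w|}A_n$ and $S_n T_w=t^{|w|}S_n$ for all $w$ in the ambient symmetric group, and the same on the left.

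Given this, \eqref{eq:id1} is immediate. Writing $\iota(A_k)$ for the antisymmetrizer embedded on the $k$ strands that pass through both boxes,
\[
A_\ell\,\iota(A_k)=A_\ell\sum_{w\in\mathcal S_k}(-t)^{-|w|}T_w=\sum_{w\in\mathcal S_k}(-t)^{-|w|}(-1)^{|w|}A_\ell=\Bigl(\sum_{w\in\mathcal S_k}t^{-|w|}\Bigr)A_\ell,
\]
and the scalar is the Poincar\'e polynomial $\sum_{w\in\mathcal S_k}x^{|w|}=\prod_{i=1}^k(1+x+\cdots+x^{i-1})$ specialized at $x=t^{-1}$, i.e.\ $[k]_{t^{-1}}!$. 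The identical computation applied from the left gives the middle equality of \eqref{eq:id1}. Identity \eqref{eq:id2} is the verbatim analogue with $A$ replaced by $S$ and eigenvalue $t$ in place of $-1$, producing the factor $\sum_{w\in\mathcal S_k}t^{|w|}=[k]_t!$.

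For \eqref{eq:id3} I would instead exploit the clash of eigenvalues. Since $m\ge2$ and the two boxes overlap in at least two strands, the supports of $A_m$ and $S_\ell$ contain a common adjacent transposition $s_i$. Then $A_m=-A_m T_i$ and $T_i S_\ell=t\,S_\ell$ give $A_m S_\ell=-A_m T_i S_\ell=-t\,A_m S_\ell$, so $(1+t)A_m S_\ell=0$; as $1+t\neq0$ in $\FF=\QQ(q,t)$ we conclude $A_m S_\ell=0$, and symmetrically $S_\ell A_m=0$.

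I do not expect a genuine obstacle here, as this is a bookkeeping lemma; the only points requiring care are the faithfulness of the diagram-to-algebra dictionary (which strands each box acts on, and in particular that the overlap in \eqref{eq:id3} really contains some $s_i$) and the harmless but essential use of $1+t\neq0$.
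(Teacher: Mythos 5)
Your proposal is correct, and there is nothing to compare it against: the paper states this lemma without proof (it is introduced as ``simple identities'', the relevant fact being the remark after \eqref{eq:defA} that $S_n$ and $A_n$ are primitive idempotents up to normalization). Your argument is exactly the standard verification underlying that remark: the quadratic relation factors as $(T_i-t)(T_i+1)=0$, the pairing of $w$ with $s_iw$ via Lemma~\ref{lem:basic} correctly yields the absorption rules $T_iA_n=A_nT_i=-A_n$ and $T_iS_n=S_nT_i=tS_n$, iteration over reduced words gives \eqref{eq:id1}--\eqref{eq:id2} with the Poincar\'e-polynomial factors $[k]_{t^{-1}}!$ and $[k]_t!$, and the eigenvalue clash $(1+t)A_mS_\ell=0$ with $1+t\neq0$ in $\QQ(q,t)$ gives \eqref{eq:id3} --- where, as you note, $m\ge2$ together with the diagrammatic overlap on at least two consecutive strands guarantees a common generator $T_i$ absorbed by both boxes.
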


For general $c\in\Z(\H_n)$, $f(c)$ can be described as
\begin{equation}\label{eq:diagf}
f(c) = \alpha_n
\left<\quad
\begin{tikzpicture}[baseline=(current  bounding  box.center),rounded corners,scale=0.8]
\foreach\j/\lab in {6/qx_n,5/,4/\cdots,3/,2/qx_2,1/qx_1} {
\draw[invarrow=0.97] (\j,0.5) -- node[pos=0.97,right] {$\ss \lab$} (\j,6.5);
\draw (\j,0) -- (\j,-0.5);
}
\foreach\i/\lab in {1/x_1,2/x_2,3/,4/\vdots,5/,6/x_n} {
\draw[invarrow=0.97] (0.5,7-\i) -- node[pos=0.96,below] {$\ss \lab$} (6.5,7-\i);
\draw (-0.5,7-\i) -- (0,7-\i);
}
\node[draw=black,rectangle,minimum height=0.4cm,minimum width=4.5cm,inner sep=0pt] at (3.5,0.25) {$\ss S_n$};
\node[draw=black,rectangle,minimum height=0.4cm,minimum width=4.5cm,inner sep=0pt,rotate=-90] at (0.25,3.5) {$\ss c$};
\end{tikzpicture}
\right>
\end{equation}
This immediately implies
\begin{lem}\label{lem:wheel}
For any $c\in\Z$, $f(c)$ satisfies the wheel conditions
\[
f(c)(x,tx,qx,x_4,\ldots,x_n)=f(c)(tqx,tx,qx,x_4,\ldots,x_n)=0
\]
\end{lem}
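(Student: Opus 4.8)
The plan is to work from the operator form \eqref{eq:diagf}, writing $g:=\langle (c\otimes S_n)Z\rangle$ so that $f(c)=\alpha_n\,g|_{y_i=qx_i}$, and to exploit the two degeneracies of the $R$-matrix $\check R_i(u)=1-t+(1-u)T_i$: at $u=t^{-1}$ it equals $(1-t)$ times the two-strand antisymmetriser $A_2=1-t^{-1}T_i$, while at $u=1$ it equals the scalar $1-t$. Each of the two wheel specialisations produces, among $x_1,x_2,x_3$ (hence, since $y_i=qx_i$, among $y_1,y_2,y_3$), exactly these two resonances: one pair of variables in ratio $t$, and one coincidence $x_i=y_j$. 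Since $f(c)$ is symmetric (Proposition~\ref{prop:charsym}), I would first use this symmetry to bring each wheel point to a canonical configuration in which the $t$-ratio pair is $(x_1,x_2)=(x_1,tx_1)$.

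The core of the argument is the $t$-resonance, and here the key algebraic input is that $S_n\check R_{n+j}(t^{-1})=0$ for $j=1,\dots,n-1$: indeed $\check R_{n+j}(t^{-1})=(1-t)(1-t^{-1}T_{n+j})$ and $S_nT_{n+j}=tS_n$, so $S_n(1-t^{-1}T_{n+j})=S_n-S_n=0$ (this is the two-strand overlap case of the orthogonality $A_mS_\ell=0$ underlying \eqref{eq:id3}). Feeding this into the column exchange relation \eqref{eq:exch-y}, namely $Z\,\check R_j(y_j/y_{j+1})=\check R_{n+j}(y_j/y_{j+1})\,\tau'_jZ$, and evaluating on the locus $y_j=t^{-1}y_{j+1}$ (i.e.\ $x_{j+1}=tx_j$ after $y_i=qx_i$), I obtain
\[
\langle (c\otimes S_n)\,Z\,\check R_j(t^{-1})\rangle=\langle \big(c\otimes (S_n\check R_{n+j}(t^{-1}))\big)\,\tau'_jZ\rangle=0 .
\]
Since $\check R_j(t^{-1})=(1-t)A_2$ on rows $j,j+1$, this reads $\langle(c\otimes S_n)Z\,A_2^{(j,j+1)}\rangle=0$; expanding $A_2=1-t^{-1}T_j$ and using $\langle(c\otimes S_n)ZT_j\rangle=\langle((cT_j)\otimes S_n)Z\rangle$ together with the centrality of $c$, this becomes the clean scalar relation $\langle((cT_j)\otimes S_n)Z\rangle=t\,g$ on the locus $\{x_{j+1}=tx_j\}$.

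To finish I would activate the second, coincidence, resonance $x_i=y_j$: there $\check R(1)=(1-t)\,\mathrm{id}$, so the two lines pass straight through and, exactly as in the freezing argument in the proof of Proposition~\ref{prop:zerospec} (cf.\ \eqref{eq:limit0}), the incident path is forced along a frozen route, factorising $Z$ at the coincidence and reducing $f(c)$ at the wheel point to a partition function on an $(n-1)\times(n-1)$ grid times an explicit prefactor. The reduced object still carries the surviving $t$-ratio among its row variables, so it vanishes by the core relation applied one level down; an induction on $n$, with a direct check at the smallest values, then gives $f(c)=0$ at the wheel point, and the same template (after the symmetry reordering) handles the second wheel condition.

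The main obstacle is precisely this last combination. The coincidence $x_i=y_j$ is a \emph{bulk} condition rather than a boundary limit, so the freezing and resulting factorisation of $Z$ are more delicate than in Proposition~\ref{prop:zerospec}, where the degeneration lives in the last row and column. Making the reduction rigorous — tracking exactly which vertices freeze once $\check R(1)$ is scalar, and verifying that the $t$-resonance is inherited by the $(n-1)$-grid factor so that the $S_n$-orthogonality still applies — is the step requiring genuine care; everything else is a direct consequence of the exchange relations and the identity $S_n\check R_{n+j}(t^{-1})=0$.
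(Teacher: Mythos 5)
You have correctly isolated every local ingredient that the paper's own proof uses: the two degenerations $\check R(1)=(1-t)\,\mathrm{id}$ and $\check R(t^{-1})=(1-t)A_2$, the annihilation $S_n A_2=0$ (the two-strand case of \eqref{eq:id3}), and the symmetry of Proposition~\ref{prop:charsym} that lets you choose which variables to specialize. Your ``core relation'' is also correct as far as it goes: from \eqref{eq:exch-y} at $y_j/y_{j+1}=t^{-1}$, the identity $(c\otimes S_n)\check R_{n+j}(t^{-1})=0$ does give $\langle(cT_j\otimes S_n)Z\rangle=t\,\langle(c\otimes S_n)Z\rangle$ on the locus $x_{j+1}=tx_j$. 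But observe that this is merely a linear relation between two brackets; it is not a vanishing statement, so your later step ``it vanishes by the core relation applied one level down'' has nothing to invoke. The vanishing has to come from an $A_2$ sitting on two \emph{adjacent legs of the explicit $S_n$ box}, and passing the $R$-matrix through $Z$ by the global exchange relation destroys exactly that adjacency information.

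The second, decisive, gap is the one you flag yourself: the claimed factorization at the bulk coincidence $x_i=y_j$ is false. A single scalar vertex $\check R(1)=(1-t)\,\mathrm{id}$ trivializes one crossing only; it freezes no neighbouring vertices and does not reduce $f(c)$ at the wheel point to an $(n-1)\times(n-1)$ partition function. The freezing in Proposition~\ref{prop:zerospec} has a different source: there one extracts the extremal power of $x_n$ as $x_n\to0$, which constrains every vertex of the last row and column; at the wheel point no extremal-degree mechanism is available, so the analogy does not transfer. The paper's proof needs neither factorization nor induction, because it combines the two resonances \emph{locally}: by symmetry set $x_{n-2}=x$, $x_{n-1}=tx$, $x_n=qx$, so that the bottom row (parameter $qx$) meets column $y_{n-2}=qx$ at parameter $1$; resolving that crossing reroutes the strand coming out of the $(n-2)$-nd leg of $S_n$ so that it immediately crosses the adjacent column $y_{n-1}=qtx$ at parameter $t^{-1}$, producing $(1-t)A_2$ directly on two neighbouring legs of $S_n$, which is zero by \eqref{eq:id3}; transposing rows and columns gives the second wheel condition. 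Without this local rerouting step (or a genuine replacement for the false bulk factorization), your outline cannot be completed.
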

\begin{proof}
$f(c)$ being symmetric in its arguments according to Proposition~\ref{prop:charsym},
we can pick any three variables and specialize them.
Let us then consider the diagram above and set $x_{n-2}=x$, $x_{n-1}=tx$, $x_n=qx$.
Zooming in on the bottom right corner, we have, using $\check R(t^{-1})=(1-t)A_2$,
\[
\begin{tikzpicture}[baseline=(current  bounding  box.center),rounded corners,scale=1.2]
\foreach\j/\lab in {6/q^2x,5/tqx,4/qx} {
\draw[invarrow=0.8] (\j,0.5) -- node[pos=0.8,right] {$\ss \lab$} (\j,1.5) node[above] {$\ss\vdots$};
\draw (\j,-0.5) -- (\j,0);
}
\draw[invarrow=0.93] (3.5,1) node[left] {$\ss\cdots$} -- node[pos=0.93,below] {$\ss qx$} (6.5,1);
\node[draw=black,rectangle,minimum height=0.6cm,minimum width=5cm,inner sep=0pt] at (4.25,0.25) {$\ss S_n$};
\draw (3,0.5) -- node[left] {$\ss\cdots$} (3,1);
\draw (3,-0.5) -- node[left] {$\ss\cdots$} (3,0);
\end{tikzpicture}
=(1-t)
\begin{tikzpicture}[baseline=(current  bounding  box.center),rounded corners,scale=1.2]
\foreach\j/\lab in {6/q^2x,5/tqx} {
\draw[invarrow=0.90] (\j,0.5) -- node[pos=0.85,right] {$\ss \lab$} (\j,1.5) node[above] {$\ss\vdots$};
\draw (\j,-0.5) -- (\j,0);
}
\draw[invarrow=0.75] (3.5,1) node[left] {$\ss\cdots$} -- (4,1) -- node[pos=0.75,right] {$\ss qx$} (4,1.5) node[above] {$\ss\vdots$};
\draw[invarrow=0.93]  (4,0.5) -- (4,1) -- node[pos=0.93,below] {$\ss qx$} (6.5,1);
\node[draw=black,rectangle,minimum height=0.6cm,minimum width=5cm,inner sep=0pt] at (4.25,0.25) {$\ss S_n$};
\node[draw=black,fill=white,rectangle,minimum height=0.4cm,minimum width=1cm,inner sep=0pt,rotate=-45] at (5,1) {$\ss A_2$};
\draw (3,0.5) -- node[left] {$\ss\cdots$} (3,1);
\draw (3,-0.5) -- node[left] {$\ss\cdots$} (3,0);
\end{tikzpicture}
=0
\]
Switching rows and columns results similarly in the second wheel condition.
\end{proof}

We now go back to $A_{\lambda_1}*\cdots* A_{\lambda_m}$.
Starting from the expression
\[
f(A_{\lambda_1}*\cdots*A_{\lambda_m})
=\alpha_n \sum_{w\in \mathcal S^{\lambda_1,\ldots,\lambda_m}}t^{-|w|}
\left<
(T_w(A_{\lambda_1}\otimes\cdots\otimes A_{\lambda_m})T_{w^{-1}}\otimes S_n)Z
\right>|_{y_i=qx_i,\ i=1,\ldots,n}
\]
and using the cyclicity of $\left<\bullet\right>$, we can describe the expression inside it
as the diagram: (compare with \eqref{eq:diagf})
\[
X_w
=
\begin{tikzpicture}[baseline=(current  bounding  box.center),rounded corners,scale=0.8]
\foreach\j/\k/\lab in {6/6/y_n,5/2/,4/5/\cdots,3/1/,2/4/y_2,1/3/y_1} {
\draw[draw=white,double=black,ultra thick, double distance=0.4pt] (\j,6.25) -- (\j,6.5) -- (\k,7.5) -- (\k,8);
\draw[invarrow=1] (\j,0.5) -- (\j,6.25) node[left] {$\ss \lab$};
}
\foreach\i/\k/\lab in {1/3/x_1,2/4/x_2,3/1/,4/5/\vdots,5/2/,6/6/x_n} {
\draw[draw=white,double=black,ultra thick, double distance=0.4pt] (0.75,7-\i) -- (0.5,7-\i) -- (-0.5,7-\k);
\draw[invarrow=0.97] (0.75,7-\i) -- node[pos=0.96,below] {$\ss \lab$} (6.5,7-\i);
\draw (-1,7-\i) -- (-1.5,7-\i);
\draw (7,7-\i) -- (7.5,7-\i);
}
\node[draw=black,rectangle,minimum height=0.4cm,minimum width=1.5cm,inner sep=0pt,rotate=-90] at (-0.75,5.5) {$\ss A_{\lambda_1}$};
\node[draw=black,rectangle,minimum height=0.4cm,minimum width=1.5cm,inner sep=0pt,rotate=-90] at (-0.75,3.5) {$\ss \cdots$};
\node[draw=black,rectangle,minimum height=0.4cm,minimum width=1.5cm,inner sep=0pt,rotate=-90] at (-0.75,1.5) {$\ss A_{\lambda_m}$};
\node[draw=black,rectangle,minimum height=0.4cm,minimum width=4.5cm,inner sep=0pt,rotate=-90] at (6.75,3.5) {$\ss S_n$};
\draw[decorate,decoration=brace,sharp corners] (0.6,0.5) -- node[below] {$\ss T_{w^{-1}}$} (-0.4,0.5);
\draw[decorate,decoration=brace,sharp corners] (0.6,6.4) -- node[left] {$\ss T_{w}$} (0.6,7.4);
\end{tikzpicture}
\]
As a side remark, using the idempotency of $A_n$ and $S_n$, we can introduce
the more symmetric diagram
\begin{equation}\label{eq:diagram}
\tilde X_w :=
\begin{tikzpicture}[baseline=(current  bounding  box.center),rounded corners,scale=0.8]
\foreach\j/\k/\lab in {6/6/y_n,5/2/,4/5/\cdots,3/1/,2/4/y_2,1/3/y_1} {
\draw[draw=white,double=black,ultra thick, double distance=0.4pt] (\j,6.25) -- (\j,6.5) -- (\k,7.5);
\draw[invarrow=1] (\j,0.5) -- (\j,6.25) node[left] {$\ss \lab$};
\draw (\j,8) -- (\j,8.5);
\draw (\j,0) -- (\j,-0.5);
}
\foreach\i/\k/\lab in {1/3/x_1,2/4/x_2,3/1/,4/5/\vdots,5/2/,6/6/x_n} {
\draw[draw=white,double=black,ultra thick, double distance=0.4pt] (0.75,7-\i) -- (0.5,7-\i) -- (-0.5,7-\k);
\draw[invarrow=0.97] (0.75,7-\i) -- node[pos=0.96,below] {$\ss \lab$} (6.5,7-\i);
\draw (-1,7-\i) -- (-1.5,7-\i);
\draw (7,7-\i) -- (7.5,7-\i);
}
\node[draw=black,rectangle,minimum height=0.4cm,minimum width=1.5cm,inner sep=0pt] at (1.5,7.75) {$\ss A_{\lambda_1}$};
\node[draw=black,rectangle,minimum height=0.4cm,minimum width=1.5cm,inner sep=0pt] at (3.5,7.75) {$\ss\cdots$};
\node[draw=black,rectangle,minimum height=0.4cm,minimum width=1.5cm,inner sep=0pt] at (5.5,7.75) {$\ss A_{\lambda_m}$};
\node[draw=black,rectangle,minimum height=0.4cm,minimum width=1.5cm,inner sep=0pt,rotate=-90] at (-0.75,5.5) {$\ss A_{\lambda_1}$};
\node[draw=black,rectangle,minimum height=0.4cm,minimum width=1.5cm,inner sep=0pt,rotate=-90] at (-0.75,3.5) {$\ss \cdots$};
\node[draw=black,rectangle,minimum height=0.4cm,minimum width=1.5cm,inner sep=0pt,rotate=-90] at (-0.75,1.5) {$\ss A_{\lambda_m}$};
\node[draw=black,rectangle,minimum height=0.4cm,minimum width=4.5cm,inner sep=0pt] at (3.5,0.25) {$\ss S_n$};
\node[draw=black,rectangle,minimum height=0.4cm,minimum width=4.5cm,inner sep=0pt,rotate=-90] at (6.75,3.5) {$\ss S_n$};
\draw[decorate,decoration=brace,sharp corners] (0.6,0.5) -- node[below] {$\ss T_{w^{-1}}$} (-0.4,0.5);
\draw[decorate,decoration=brace,sharp corners] (0.6,6.4) -- node[left] {$\ss T_{w}$} (0.6,7.4);
\end{tikzpicture}
\end{equation}
with $\tilde X_w = \left<X_w\right> A_{\lambda_1}\otimes\cdots\otimes A_{\lambda_m}\otimes S_n$. We shall not use $\tilde X_w$ in what follows.

\begin{prop}\label{prop:nasty}
\begin{itemize}
    \item For $m<n$, one has the vanishing condition:
    \begin{equation}\label{eq:vanish}
    f(A_{\lambda_1}*\cdots*A_{\lambda_m})(x,t x, \ldots,t^m x,x_{m+2},\ldots,x_n)=0
    \end{equation}
    \item
The partition function $f(A_n)$ has the fully factorized form
\begin{equation}\label{eq:m1}
f(A_n)= 
t^{-\frac{n(n-1)}{2}}\prod_{\substack{i,j=1\\i\ne j}}^n (1-t x_i/x_j)
\end{equation}
\item One has the specialization
\begin{equation}\label{eq:specA1}
f(A_1*\cdots*A_1)(1,t,\ldots,t^{n-1})=
[n]_{t^{-1}}!
\prod_{1\le i<j\le n}(q-t^{j-i})(q-t^{i-j+1})
\end{equation}
\end{itemize}
\end{prop}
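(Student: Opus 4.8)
The plan is to prove the three items in the order stated, using the diagrammatic calculus together with the fusion relations $\check R(t)=(1-t)S_2$ and $\check R(t^{-1})=(1-t)A_2$ and the annihilation identities \eqref{eq:id1}--\eqref{eq:id3}. For the vanishing \eqref{eq:vanish} I would argue by fusion, mimicking the proof of Lemma~\ref{lem:wheel}. Since $f(A_{\lambda_1}*\cdots*A_{\lambda_m})$ is symmetric (Proposition~\ref{prop:charsym}), I am free to distribute the specialized values $x,tx,\dots,t^m x$ over any $m+1$ of the arguments. Because $m<n$ forces $\sum_j\lambda_j=n>m$, at least one antisymmetrizer block $A_{\lambda_j}$ has size $\ge 2$; I place two values in ratio $t$ (say $x$ and $tx$) on two strands of that block. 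Using the exchange relation \eqref{eq:exch-x} on those two rows, as in the symmetry argument of Proposition~\ref{prop:charsym}, brings a factor $\check R(t)=(1-t)S_2$ to the boundary, acting on two strands of the block carrying $A_{\lambda_j}$. Since $T_sA_{\lambda_j}=-A_{\lambda_j}$ for every simple reflection $s$ inside the block, one has $S_2A_{\lambda_j}=0$ (the antisymmetrizer analogue of \eqref{eq:id3}), and the whole expression vanishes. Only two of the specialized variables are actually used, so in fact $f(A_{\lambda_1}*\cdots*A_{\lambda_m})$ vanishes whenever \emph{any} two arguments are in ratio $t$, provided $m<n$.

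For the factorization \eqref{eq:m1} I would induct on $n$. The case $m=1$ of the vanishing just proved, combined with symmetry, shows that $f(A_n)$ vanishes whenever $x_b=tx_a$ with $a\ne b$, so in the Laurent ring $\FF[x_1^\pm,\dots,x_n^\pm]$ it is divisible by $g:=\prod_{a\ne b}(1-tx_a/x_b)$, the numerators $x_b-tx_a$ being pairwise coprime. Write $f(A_n)=g\,R$ with $R$ a symmetric Laurent polynomial of degree $0$. Now $f(A_n)\in\A_n$, so by the $r=1$ case of the degree bounds \eqref{eq:degbounds} (and symmetry) its degree in each variable lies in $[-(n-1),n-1]$; since $g$ already attains both $\pm(n-1)$ in each variable, $R$ must have degree $0$ in every variable and hence is a constant $c_n$. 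To pin $c_n$ down I apply Proposition~\ref{prop:zerospec} with $\lambda=(n)$: as $x_n\to0$ one has $g\sim(-t)^{n-1}x_n^{-(n-1)}(\prod_{i<n}x_i)\,g'$, where $g'$ is the corresponding product in $n-1$ variables, and comparing with $\lim_{x_n\to0}x_n^{n-1}f(A_n)=(-1)^{n-1}(\prod_{i<n}x_i)f(A_{n-1})$ gives $c_n(-t)^{n-1}=(-1)^{n-1}c_{n-1}$, i.e.\ $c_n=t^{-(n-1)}c_{n-1}$. With $c_1=1$ this yields $c_n=t^{-n(n-1)/2}$, which is exactly \eqref{eq:m1}.

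The specialization \eqref{eq:specA1} is the step I expect to be the main obstacle, as it demands an exact product rather than a structural vanishing. Here every block has size $1$, so the argument of the first part does not apply and the answer is nonzero. I would evaluate $f(A_1*\cdots*A_1)=\alpha_n\langle(A_1^{*n}\otimes S_n)Z\rangle$ directly from the diagram \eqref{eq:diagf} at $x_i=t^{i-1}$. The expectation is that the $t$-progression, together with the column symmetrizer $S_n$, collapses the sum over configurations to a single ``sorted'' contribution, in exact parallel with the shuffle-side computation of $\epsilon_{(1,\dots,1)}(x,tx,\dots,t^{n-1}x)$ in \eqref{eq:epsB_0}, where $\omega(tx,x)=0$ annihilates all but one term. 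The prefactor $[n]_{t^{-1}}!$ should arise as the normalization produced when the symmetrizer is fused across the progression (compare the constants $[k]_{t^{-1}}!$ and $[k]_t!$ in \eqref{eq:id1}--\eqref{eq:id2}), while the product $\prod_{1\le i<j\le n}(q-t^{j-i})(q-t^{i-j+1})$ should come from the Boltzmann weights of the surviving configuration: with $y_i=qx_i=qt^{i-1}$ these are exactly of the form $(1-x_i/y_j)$ and $(1-tx_i/y_j)$ seen in the explicit corner contribution \eqref{eq:w0}. The delicate points will be to show rigorously that every non-sorted term vanishes and to track the powers of $t$ and $q$ in the surviving weight so as to reproduce \eqref{eq:specA1} on the nose; an alternative route, should the direct collapse prove unwieldy, is to build $f(A_1^{*n})$ from $F_{w_0}$ via the exchange relations of Proposition~\ref{prop:exch} and specialize at the end.
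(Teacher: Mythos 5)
Your fusion argument establishes \eqref{eq:vanish} only for $m=1$; for $1<m<n$ it breaks down, and the stronger statement you extract from it is false. In $f(A_{\lambda_1}*\cdots*A_{\lambda_m})=\alpha_n\sum_{w\in\mathcal S^{\lambda_1,\ldots,\lambda_m}}t^{-|w|}\langle (T_w(A_{\lambda_1}\otimes\cdots\otimes A_{\lambda_m})T_{w^{-1}}\otimes S_n)Z\rangle$ the two specialized values sit on two \emph{fixed} adjacent rows of the grid, so the fused $\check R(t)=(1-t)S_2$ acts on two fixed strands, which the $w$-dependent wiring then routes into blocks you do not get to choose; you cannot ``place'' the pair on a block of size $\ge 2$. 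For the many $w$ that route the two strands into distinct blocks there is no annihilation, and such terms genuinely survive: take $n=3$, $\lambda=(2,1)$, where $f(A_2*A_1)=V_3\,\epsilon_{2,1}$. Of the three coset terms in $\epsilon_{2,1}(x,tx,y)$, the one with the pair $\{x,tx\}$ inside $\epsilon_2$ vanishes because $\epsilon_2(x,tx)=0$, and the one with singleton $tx$ vanishes because $\omega(tx,x)=0$, but the term with singleton $x$ survives since $\omega(x,tx)\neq 0$; hence $f(A_2*A_1)(x,tx,y)\neq 0$ generically, contradicting your claim that any two arguments in ratio $t$ force vanishing when $m<n$. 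This is exactly why the proposition requires the full progression $x,tx,\ldots,t^m x$: the paper fuses all $m+1$ specialized rows into $S_{m+1}$ (the product of $R$-matrices of a wiring of the longest element of $\mathcal S_{m+1}$ at parameters $t^m,\ldots,1$) and invokes the pigeonhole principle---among $m+1$ strands feeding $m$ blocks, two must enter the same block, consecutively because $w$ is a minimal coset representative---so that \eqref{eq:id3} kills every term of the $w$-sum simultaneously. Your $m=1$ case is correct (one block, so the routing issue disappears), and with it your proof of \eqref{eq:m1}---divisibility, degree saturation against \eqref{eq:degbounds}, constant pinned by Proposition~\ref{prop:zerospec}---coincides with the paper's.

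Item \eqref{eq:specA1} you do not prove, and the collapse you anticipate cannot occur in the form you describe: at $x_i=t^{i-1}$, $y_i=qt^{i-1}$ the ratios $x_i/x_j$ are powers of $t$, so the weights $1-q^{-1}x_ix_j^{-1}$ and $1-tq^{-1}x_ix_j^{-1}$ in \eqref{eq:wt} are generically nonzero and \emph{no} individual lattice configuration vanishes; there is no configuration-level analogue of $\omega(tx,x)=0$, so the sum does not reduce to a single ``sorted'' configuration. Even at the level of the sum over $w$ nothing collapses to one term: in the paper's proof every $X_w$ contributes equally, namely $t^{|w|}$ times one common diagram, in contrast with \eqref{eq:epsB_0} where exactly one coset term survives. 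The paper's mechanism is again fusion: the triangle of crossings carrying the specialized parameters is recognized as $S_n$ up to normalization; it absorbs the crossings of each $T_w$ at the cost of $t^{|w|}$, trivializing the $w$-sum and producing the factor $[n]_{t^{-1}}!=t^{-n(n-1)/2}[n]_t!$; after setting $y_i=qx_i$ the top $S_n$ is transported to the bottom; and the remaining triangles of $R$-matrices are expanded keeping only the coefficient of the identity, which yields the two products recombining into $\prod_{1\le i<j\le n}(q-t^{j-i})(q-t^{i-j+1})$. Your fallback suggestion via $F_{w_0}$ and Proposition~\ref{prop:exch} is likewise only a pointer, so as it stands the first item is wrong in the stated generality and the third is unestablished.
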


\begin{proof}
We first consider $m=1$, and investigate the effect of setting $x_1=t x$, $x_2=x$ in $X_w$, where here $w=1$.
Noting that $\check R_1(t)=(1-t)(1+T_1)=(1-t)S_2$, and using \eqref{eq:id2}, we can insert a crossing at the right of the top two horizontal lines and then
 apply the Yang--Baxter equation repeatedly to move the crossing to the left,
resulting in zero according to \eqref{eq:id3}:
\begin{align*}
X_1|_{x_1=t x,\ x_2=x}
&=\frac{1}{1+t}\ 
\begin{tikzpicture}[baseline=(current  bounding  box.center),rounded corners,scale=0.8]
\foreach\j/\lab in {6/y_n,5/,4/\cdots,3/,2/y_2,1/y_1} {
\draw[invarrow=0.97] (\j,0.5) -- node[pos=0.97,left] {$\ss \lab$} (\j,6.5) -- (\j,7);
}
\foreach\i/\lab in {1/tx,2/x,3/,4/\vdots,5/,6/x_n} {
\draw (-0.5,7-\i) -- (0,7-\i);
\draw[invarrow=0.85] (0.5,7-\i) -- node[pos=0.85,below] {$\ss \lab$} (7.25,7-\i);
\draw (7.75,7-\i) -- (8.25,7-\i);
}
\node[draw=black,rectangle,minimum height=0.4cm,minimum width=4.5cm,inner sep=0pt,rotate=-90] at (0.25,3.5) {$\ss A_n$};
\node[draw=black,rectangle,minimum height=0.4cm,minimum width=4.5cm,inner sep=0pt,rotate=-90] at (7.5,3.5) {$\ss S_n$};
\node[draw=black,fill=white,rectangle,minimum height=0.4cm,minimum width=1.2cm,inner sep=0pt,rotate=-90] at (6.75,5.5) {$\ss S_2$};
\end{tikzpicture}
\\
&=\frac{1}{1+t}\ 
\begin{tikzpicture}[baseline=(current  bounding  box.center),rounded corners,scale=0.8]
\foreach\j/\lab in {6/y_n,5/,4/\cdots,3/,2/y_2,1/{y_1\atop}} {
\draw[invarrow=0.97] (\j,0.5) -- node[pos=0.97,left] {$\ss \lab$} (\j,6.5) -- (\j,7);
}
\foreach\i/\lab in {1/x,2/tx,3/,4/\vdots,5/,6/x_n} {
\draw (1,7-\i) -- (0,7-\i);
\draw[invarrow=0.97] (0.75,7-\i) -- node[pos=0.96,below] {$\ss \lab$} (6.5,7-\i);
\draw (-0.5,7-\i) -- (-1,7-\i);
\draw (7,7-\i) -- (7.5,7-\i);
}
\node[draw=black,rectangle,minimum height=0.4cm,minimum width=4.5cm,inner sep=0pt,rotate=-90] at (-0.25,3.5) {$\ss A_n$};
\node[draw=black,rectangle,minimum height=0.4cm,minimum width=4.5cm,inner sep=0pt,rotate=-90] at (6.75,3.5) {$\ss S_n$};
\node[draw=black,fill=white,rectangle,minimum height=0.4cm,minimum width=1.2cm,inner sep=0pt,rotate=-90] at (0.5,5.5) {$\ss S_2$};
\end{tikzpicture}
=0
\end{align*}

We conclude that $x_1-tx_2$ divides $f(A_n)$, and by symmetry,
$f(A_n)$ is a multiple of $\prod_{i\ne j} (1-t x_i/x_j)$. The latter has degree range $[-(n-1),(n-1)]$ in each $x_i$, which is the maximum allowed according to \eqref{eq:degbounds}, so only a constant in $\FF$ remains to be determined. The latter is determined inductively by using Proposition~\ref{prop:zerospec}.

We proceed identically for $1<m<n$. We set $x_1=t^m x$, \dots, $x_m=tx$, $x_{m+1}=x$.
We note that the ``$R$-matrix associated to the longest element $w_0$ of $\mathcal S_{m+1}$'', i.e., the product of $R$-matrices whose diagram reproduces (any) wiring diagram of $w_0$, with spectral parameters set to $t^{m},\ldots,1$, is nothing but 
$(1-t)^{\frac{m(m+1)}{2}}\prod_{i=1}^m [i]_t!\,
S_{m+1}$ (see e.g. \cite[Lem.~3]{Lascoux-Frob} or \cite[3.1]{IO-shuffle}), and move it across to the left using the Yang--Baxter equation:
\[
X_w|_{x_1=t^m x,\ldots,x_{m+1}=x}
=\frac{1}{[m+1]_t!}\ 
\begin{tikzpicture}[baseline=(current  bounding  box.center),rounded corners,scale=0.8]
\foreach\j/\k/\lab in {6/6/y_n,5/2/,4/5/\cdots,3/1/,2/4/y_2,1/3/y_1} {
\draw[draw=white,double=black,ultra thick, double distance=0.4pt] (\j,6.25) -- (\j,6.5) -- (\k,7.5) -- (\k,8);
\draw[invarrow=1] (\j,0.5) -- (\j,6.25) node[above left] {$\ss \lab$};
}
\foreach\i/\k/\lab in {1/3/x,2/4/tx,3/1/\vdots,4/5/{t^{m}x\ },5/2/\vdots,6/6/x_n} {
\draw[draw=white,double=black,ultra thick, double distance=0.4pt] (0.75,7-\i) -- (0,7-\i) -- (-1,7-\k);
\draw[invarrow=0.97] (0.75,7-\i) -- node[pos=0.96,below] {$\ss \lab$} (6.5,7-\i);
\draw (-1.5,7-\i) -- (-2,7-\i);
\draw (7,7-\i) -- (7.5,7-\i);
}
\node[draw=black,rectangle,minimum height=0.4cm,minimum width=1.5cm,inner sep=0pt,rotate=-90] at (-1.25,5.5) {$\ss A_{\lambda_1}$};
\node[draw=black,rectangle,minimum height=0.4cm,minimum width=1.5cm,inner sep=0pt,rotate=-90] at (-1.25,3.5) {$\ss \cdots$};
\node[draw=black,rectangle,minimum height=0.4cm,minimum width=1.5cm,inner sep=0pt,rotate=-90] at (-1.25,1.5) {$\ss A_{\lambda_m}$};
\node[draw=black,rectangle,minimum height=0.4cm,minimum width=4.5cm,inner sep=0pt,rotate=-90] at (6.75,3.5) {$\ss S_n$};
\node[draw=black,fill=white,rectangle,minimum height=0.4cm,minimum width=2.8cm,inner sep=0pt,rotate=-90] at (0.5,4.5) {$\ss S_{m+1}$};
\end{tikzpicture}
\]
The pigeonhole principle tells us there will be two numbers between $1$ and $m+1$ whose preimages under $w$
lie in the same block (i.e., are associated to the same part of $\lambda$). Since $w\in \mathcal S^{\lambda_1,\ldots,\lambda_m}$, all numbers whose preimages are in the same block
are consecutive, and so we can apply \eqref{eq:id3} to conclude that the result is zero. By summing over $w$ and using symmetry in the exchange of variables, we obtain the result \eqref{eq:vanish} as stated in the Proposition.

The only case left is $m=n$, i.e., $A_1*\cdots*A_1$. We set $x_1=t^{n-1}x,\ldots,x_{n-1}=tx,x_n=x$ but this time the result
does not vanish. \rem[gray]{in fact, we could play the same trick with $r=m$ and get a factorized expression for any $m$,
which is how I did it originally.} However, noting once again that $S_n$ is up to normalization
the product of $R$-matrices associated to the longest permutation $w_0$ of $\mathcal S_n$,
we can move it to the left and absorb any crossing from $T_w$:
\begin{multline*}
X_w|_{x_1=t^{n-1} x,\ldots,x_{n}=x}
=
\begin{tikzpicture}[baseline=(current  bounding  box.center),rounded corners,scale=0.8]
\foreach\j/\k/\lab in {6/6/y_n,5/2/,4/5/\cdots,3/1/,2/4/y_2,1/3/y_1} {
\draw[draw=white,double=black,ultra thick, double distance=0.4pt] (\j,6.25) -- (\j,6.5) -- (\k,7.5) -- (\k,8);
\draw[invarrow=1] (\j,0.5) -- (\j,6.25) node[above left] {$\ss \lab$};
}
\foreach\i/\k/\lab in {1/3/x,2/4/tx,3/1/,4/5/\vdots,5/2/,6/6/{t^{n-1}x\quad}} {
\draw[draw=white,double=black,ultra thick, double distance=0.4pt] (0.75,7-\i) -- (0,7-\i) -- (-1,7-\k) -- (-1.5,7-\k);
\draw[invarrow=0.97] (0.75,7-\i) -- node[pos=0.96,below] {$\ss \lab$} (6.5,7-\i);
}
\node[draw=black,fill=white,rectangle,minimum height=0.4cm,minimum width=4.4cm,inner sep=0pt,rotate=-90] at (0.5,3.5) {$\ss S_n$};
\end{tikzpicture}
\\
=t^{|w|}\ 
\begin{tikzpicture}[baseline=(current  bounding  box.center),rounded corners,scale=0.8]
\foreach\j/\k/\lab in {6/6/y_n,5/2/,4/5/\cdots,3/1/,2/4/y_2,1/3/y_1} {
\draw[draw=white,double=black,ultra thick, double distance=0.4pt] (\j,6.25) -- (\j,6.5) -- (\k,7.5) -- (\k,8);
\draw[invarrow=1] (\j,0.5) -- (\j,6.25) node[above left] {$\ss \lab$};
}
\foreach\i/\lab in {1/x,2/tx,3/,4/\vdots,5/,6/{t^{n-1}x\quad}} {
\draw[draw=white,double=black,ultra thick, double distance=0.4pt] (0.75,7-\i) -- (0,7-\i) -- (-1,7-\i);
\draw[invarrow=0.97] (0.75,7-\i) -- node[pos=0.96,below] {$\ss \lab$} (6.5,7-\i);
}
\node[draw=black,fill=white,rectangle,minimum height=0.4cm,minimum width=4.4cm,inner sep=0pt,rotate=-90] at (0,3.5) {$\ss S_n$};
\end{tikzpicture}
=t^{|w|-\frac{n(n-1)}{2}}\ 
\begin{tikzpicture}[baseline=(current  bounding  box.center),rounded corners,scale=0.8]
\foreach\j/\k/\lab in {6/6/y_n,5/2/,4/5/\cdots,3/1/,2/4/y_2,1/3/y_1} {
\draw[draw=white,double=black,ultra thick, double distance=0.4pt] (\j,6.25) -- (\j,6.5) -- (\k,7.5) -- (\k,8);
\draw[invarrow=1] (\j,0.5) -- (\j,6.25) node[above left] {$\ss \lab$};
}
\foreach\i/\k/\lab in {1/6/{t^{n-1}x\quad},2/5/,3/4/\vdots,4/3/,5/2/tx,6/1/x} {
\draw[draw=white,double=black,ultra thick, double distance=0.4pt] (0.75,7-\i) -- (0,7-\i) [bend right=10] to (-1,7-\k) -- (-1.5,7-\k);
\draw[invarrow=0.97] (0.75,7-\i) -- node[pos=0.96,below] {$\ss \lab$} (6.5,7-\i);
\draw (7,7-\i) -- (7.5,7-\i);
}
\node[draw=black,rectangle,minimum height=0.4cm,minimum width=4.5cm,inner sep=0pt,rotate=-90] at (6.75,3.5) {$\ss S_n$};
\end{tikzpicture}
\end{multline*}
where in the last line we find it convenient to reintroduce the longest element $w_0$, before moving back $S_n$ to the right.
Performing the summation over $w$ (one could instead absorb the top crossings as well, but the result would be the same), we find
\[
\sum_{w\in\mathcal S_n} t^{-|w|}X_w
=t^{-\frac{n(n-1)}{2}}\ 
\begin{tikzpicture}[baseline=(current  bounding  box.center),rounded corners,scale=0.8]
\foreach\j/\lab in {6/y_n,5/,4/\cdots,3/,2/y_2,1/y_1} {
\draw[invarrow=0.97] (\j,0.5) -- node[pos=0.97,left] {$\ss \lab$} (\j,6.5);
\draw (\j,7) -- (\j,7.5);
}
\foreach\i/\k/\lab in {1/6/{t^{n-1}x\quad},2/5/\vdots,3/4/,4/3/,5/2/tx,6/1/x} {
\draw[draw=white,double=black,ultra thick, double distance=0.4pt] (0.75,7-\i) -- (0,7-\i) [bend right=10] to (-1,7-\k) -- (-1.5,7-\k);
\draw[invarrow=0.97] (0.75,7-\i) -- node[pos=0.96,below] {$\ss \lab$} (6.5,7-\i);
\draw (7,7-\i) -- (7.5,7-\i);
}
\node[draw=black,rectangle,minimum height=0.4cm,minimum width=4.5cm,inner sep=0pt,rotate=-90] at (6.75,3.5) {$\ss S_n$};
\node[draw=black,rectangle,minimum height=0.4cm,minimum width=4.5cm,inner sep=0pt] at (3.5,6.75) {$\ss S_n$};
\end{tikzpicture}
\]
We then impose $y_i=q x_i$ and note that up to normalization, the $S_n$ at the top is nothing but the $R$-matrix
associated to the longest element of $\mathcal S_n^{(2)}$; we can pull it to the bottom:
\[
\sum_{w\in\mathcal S_n} t^{-|w|}X_w|_{y_i=qx_i}
=t^{-\frac{n(n-1)}{2}}\ 
\begin{tikzpicture}[baseline=(current  bounding  box.center),rounded corners,scale=0.8]
\foreach\j/\lab in {6/{qt^{n-1}x\hspace{-0.8cm}\atop},5/,4/\cdots,3/,2/qtx,1/qx} {
\draw[invarrow=0.97] (\j,0.5) -- node[pos=0.97,left] {$\ss \lab$} (\j,6.5);
\draw (\j,-0.5) -- (\j,0);
}
\foreach\i/\k/\lab in {1/6/{t^{n-1}x\quad},2/5/\vdots,3/4/,4/3/,5/2/tx,6/1/x} {
\draw[draw=white,double=black,ultra thick, double distance=0.4pt] (0.75,7-\i) -- (0,7-\i) [bend right=10] to (-1,7-\k) -- (-1.5,7-\k);
\draw[invarrow=0.97] (0.75,7-\i) -- node[pos=0.96,below] {$\ss \lab$} (6.5,7-\i);
\draw (7,7-\i) -- (7.5,7-\i);
}
\node[draw=black,rectangle,minimum height=0.4cm,minimum width=4.5cm,inner sep=0pt,rotate=-90] at (6.75,3.5) {$\ss S_n$};
\node[draw=black,rectangle,minimum height=0.4cm,minimum width=4.5cm,inner sep=0pt] at (3.5,0.25) {$\ss S_n$};
\end{tikzpicture}
\]
Finally, the bracket is
\[
f(A_1*\cdots*A_1)
=\alpha_n t^{-\frac{n(n-1)}{2}}[n]_t!
\left<
\begin{tikzpicture}[baseline=(current  bounding  box.center),rounded corners,scale=0.8]
\foreach\j/\lab in {6/{qt^{n-1}x\!\!\!\!\!\!\!},5/,4/\cdots,3/,2/qtx,1/qx} {
\draw[invarrow=0.97] (\j,0.5) -- node[pos=0.97,left] {$\ss \lab$} (\j,6.5);
}
\foreach\i/\k/\lab in {1/6/{t^{n-1}x\quad},2/5/\vdots,3/4/,4/3/,5/2/tx,6/1/x} {
\draw[draw=white,double=black,ultra thick, double distance=0.4pt] (0.75,7-\i) -- (0,7-\i) [bend right=10] to (-1,7-\k) -- (-1.5,7-\k);
\draw[invarrow=0.97] (0.75,7-\i) -- node[pos=0.96,below] {$\ss \lab$} (6.5,7-\i);
\draw (7,7-\i) -- (7.5,7-\i);
}
\node[draw=black,rectangle,minimum height=0.4cm,minimum width=4.5cm,inner sep=0pt,rotate=-90] at (6.75,3.5) {$\ss S_n$};
\end{tikzpicture}
\right>
\]
This is essentially the same situation that was considered at the end of \S \ref{sec:hecke}. First we expand each $R$-matrix in the upper-left triangle as 
$\check R_i(u)=u(1-t)+t(1-u) T_i^{-1}$ and note that the only term that has a nonzero identity coefficient is
\[
f(A_1*\cdots*A_1)
=\alpha_n [n]_t!
(1-t)^n 
\prod_{\substack{1\le i,j\le n\\i+j<n+1}}
(1-q^{-1}t^{n-i-j-1})
\left<
\begin{tikzpicture}[baseline=(current  bounding  box.center),rounded corners,scale=0.8]
\foreach\i/\lab/\labb in {1/{t^{n-1}x\quad}/{qt^{n-1}x\!\!\!\!\!\!\!},2/\vdots/,3//\cdots,4/,5/tx/qtx,6/{\ x}/qx} {
\draw[draw=white,double=black,ultra thick, double distance=0.4pt] (7-\i,6.5) -- (7-\i,7-\i) -- (0,7-\i) [bend right=10] to (-1,\i) -- (-1.5,\i);
\draw[invarrow=0.97] (7-\i,0.75) -- (7-\i,7-\i) -- node[pos=0.96,below] {$\ss \lab$} (6.5,7-\i);
\draw (7,7-\i) -- (7.5,7-\i);
\draw[invarrow=1] (7-\i,0.5) -- (7-\i,0.75) node[left] {$\ss\labb$};
}
\node[draw=black,rectangle,minimum height=0.4cm,minimum width=4.5cm,inner sep=0pt,rotate=-90] at (6.75,3.5) {$\ss S_n$};
\end{tikzpicture}
\right>
\]

Secondly, we absorb all the crossings of the lower-right triangle into $S_n$,
resulting in a weight of $\prod_{\substack{1\le i,j\le n\\i+j>n+1}}
(1-q^{-1}t^{n-i-j+2})$. Recombining and simplifying, we obtain \eqref{eq:specA1}.
\end{proof}

We are now in a position to prove Theorem~\ref{thm:square}.
First, combining Lemma~\ref{lem:deg}, Proposition~\ref{prop:charsym} and Lemma~\ref{lem:wheel},
we find that $f(c)$, from its definition \eqref{eq:deff2}, is
$\alpha_n$ times an element of $\A_n$. $\alpha_n$ contains a factor of $(1-t)^{-n}$, but it is compensated by the fact that any lattice path entering from the left and exiting from the top must have a bend, resulting in a factor of $(1-t)^n$ according to \eqref{eq:wt} in the expression \eqref{eq:deff} of $f(c)$. Therefore, $f$ is a map from $\Z(\H_n)$ to $\A_n$.

%

Next, we identify $f(A_{\lambda_1}*\cdots*A_{\lambda_m})$.
We use the characterization of Proposition~\ref{prop:P_specs}, so that
$\epsilon_{\lambda_1,\ldots,\lambda_m}$ is entirely determined by Lemma~\ref{lem:epsrec} and \eqref{eq:epsB_0}.
On the other hand, it is easy to check that Proposition \ref{prop:nasty} implies the exact same relations for $V_n^{-1} f(A_{\lambda_1}*\cdots*A_{\lambda_m})$  (where $V_n$ is defined in \eqref{eq:defV}, and implements the isomorphism $V_n\times$ from $\mathcal A_n$ to $\A_n$). Therefore,
\[
f(A_{\lambda_1}*\cdots*A_{\lambda_m})
=V_n\,\epsilon_{\lambda_1,\ldots,\lambda_m}
\]

We finally compare $\Upsilon(f(A_{\lambda_1}*\cdots*A_{\lambda_m}))$ and $\Phi(A_{\lambda_1}*\cdots*A_{\lambda_m})$. 

According to the ring map property of $\Upsilon$ and \eqref{eq:Ueps} applied to $\epsilon_r=\epsilon_r^{(3)}$,
\begin{align*}
\Upsilon(f(A_{\lambda_1}*\cdots*A_{\lambda_m}))&=
\Upsilon(V_n\, \epsilon^{(3)}_{\lambda_1}*\cdots*\epsilon^{(3)}_{\lambda_m})
\\
&=\prod_{i=1}^m \Upsilon(V_{\lambda_i}\,\epsilon^{(3)}_{\lambda_i})
\\
&=
\prod_{i=1}^m \sigma_{q^{-1}}\sigma_t^{-1}(e_{\lambda_i})
\end{align*}

On the other hand, from \eqref{eq:PhiAn} and the ring map property of $\Phi$,
\[
\Phi(A_{\lambda_1}*\cdots*A_{\lambda_m})=\prod_{i=1}^m e_{\lambda_i}
\]
We have therefore checked the statement of Theorem \ref{thm:square} on a basis of $\Z$.
\begin{cor}
We have the following identity
\begin{equation}\label{eq:feqkappa}
f(1_n) = V_n\kappa_n
\end{equation}
\end{cor}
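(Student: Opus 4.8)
The plan is to deduce the corollary from Theorem~\ref{thm:square}, which has just been established, together with the characterization \eqref{eq:kappa_ev} of $\kappa_n$; no new lattice-path computation is needed. First I would evaluate the commuting square of Theorem~\ref{thm:square} on the unit $1_n\in\Z(\H_n)$, which gives $\Upsilon(f(1_n))=\sigma_{q^{-1}}\sigma_t^{-1}(\Phi(1_n))$. By the definition $\tilde h_n=\Phi(1_n)$ and Proposition~\ref{prop:identsym} one has $\Phi(1_n)=\tilde h_n=\sigma_t(h_n)$, so that $\Upsilon(f(1_n))=\sigma_{q^{-1}}\sigma_t^{-1}\sigma_t(h_n)=\sigma_{q^{-1}}(h_n)$.

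Next I would simplify the plethystic automorphism. Since each $\sigma_u$ and $\sigma_\infty$ acts diagonally on the power sums $p_r$, they commute pairwise; using $\sigma_{q^{-1}}=\sigma_\infty\sigma_q=\sigma_q\sigma_\infty$ and the fact that $\sigma_\infty$ exchanges $h_n$ and $e_n$, I obtain $\sigma_{q^{-1}}(h_n)=\sigma_q(\sigma_\infty(h_n))=\sigma_q(e_n)$. Hence $\Upsilon(f(1_n))=\sigma_q(e_n)$.

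On the shuffle side, formula \eqref{eq:kappa_ev} of Proposition~\ref{prop:kappa} reads $\Upsilon'(\kappa_n)=\sigma_q(e_n)$. Because $\Upsilon$ is obtained from $\Upsilon'$ by precomposing with multiplication by $V_n^{-1}$ (cf.\ Remark~\ref{rmk:A} and the definition of $\Upsilon$ in \S\ref{sec:macdo}), one has $\Upsilon(V_n\kappa_n)=\Upsilon'(\kappa_n)=\sigma_q(e_n)$. Comparing the two computations gives $\Upsilon(f(1_n))=\Upsilon(V_n\kappa_n)$, and since $\Upsilon$ is an isomorphism, in particular injective, this yields $f(1_n)=V_n\kappa_n$, as claimed.

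There is essentially no deep obstacle remaining: the substance is carried entirely by Theorem~\ref{thm:square} and by the identifications $\Phi(1_n)=\tilde h_n=\sigma_t(h_n)$ and $\Upsilon'(\kappa_n)=\sigma_q(e_n)$, all of which are in hand. The only point demanding genuine care is the bookkeeping of the diagonal automorphisms $\sigma_t,\sigma_{q^{-1}},\sigma_\infty$ on $\Lambda$: one must verify that the $\sigma_t^{-1}$ factor in $\sigma_{q^{-1}}\sigma_t^{-1}$ cancels precisely the $\sigma_t$ in $\tilde h_n=\sigma_t(h_n)$, and that the residual $\sigma_{q^{-1}}$ converts $h_n$ into $\sigma_q(e_n)$ through $\sigma_\infty$. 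A quick sanity check at small $n$ (say $n=1,2$), comparing $V_n^{-1}f(1_n)$ against the $\kappa_n$ produced by the recurrence \eqref{eq:kappa_x}, would confirm that no stray normalization has been lost.
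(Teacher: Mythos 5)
Your proof is correct and follows essentially the same route as the paper's: both evaluate the commuting square of Theorem~\ref{thm:square} on $1_n$, use $\Phi(1_n)=\tilde h_n=\sigma_t(h_n)$ together with $\sigma_{q^{-1}}=\sigma_\infty\sigma_q$ and $\sigma_\infty(h_n)=e_n$ to get $\sigma_q(e_n)$, match this against $\Upsilon(V_n\kappa_n)=\Upsilon'(\kappa_n)=\sigma_q(e_n)$ from \eqref{eq:kappa_ev}, and conclude by injectivity of $\Upsilon$. You merely make the plethystic bookkeeping explicit where the paper compresses it into one line.
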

\begin{proof}
According to \eqref{eq:kappa_ev} and Proposition~\ref{prop:epsP}, $\Upsilon(V_n \kappa_n)=\Upsilon'(\kappa_n)=\sigma_q(e_n)$. On the other hand,
according to Proposition~\ref{prop:identsym}, $\Phi(1_n)=\tilde h_n=\sigma_t(h_n)=\sigma_t \sigma_\infty(e_n)$.
Applying Theorem~\ref{thm:square}, we conclude that (\ref{eq:feqkappa}) holds. 
\end{proof}
Using the definition of the shuffle product \eqref{eq:shuffle_prod} we can write
\begin{multline}
    \label{eq:kappa_rec2}
     \left(x_1+\dots + x_n \right) \kappa_n(x_1,\ldots,x_n)\\
    = \sum_{i=1}^n 
x_i  \kappa_{n-1}(x_1,\ldots,\hat{x_i},\ldots,x_n)
\prod_{\substack{j=1 \\ j\neq i}}^n \frac{(x_i-q_1 x_j)(x_i- q_2 x_j)
(x_i- q_3 x_j)}{(x_i-x_j)^3}
\end{multline}
The recurrence relation for $f(1_n)$ stated in  (\ref{eq:frec}) follows from \eqref{eq:feqkappa}, (\ref{eq:kappa_rec2}) and (\ref{eq:defV}) thus proving Theorem \ref{thm:f1n}.

\section{Application to the commuting scheme}\label{sec:commut}
\subsection{Proof of Theorem~\ref{thm:mainK}}\label{sec:proof_mainK}
In this section, all schemes are over $\CC$, as in the introduction.
\begin{prop}\label{prop:sasha}
Assuming Conjecture~\ref{conj:CM},
$K_n$ satisfies the following recurrence relation:
\begin{equation}\label{eq:sasha}
(x_1 + \dots + x_n)  K_n=
\sum_{i=1}^n 
x_i  K_{n-1}[\hat{x_i}]
\prod_{\substack{j=1 \\ j\neq i}}^n \frac{(1-q_1 x_j/x_i)(1- q_2 x_j/x_i)
(1- q_1 q_2 x_i/x_j)}{1-x_j/x_i}
\end{equation}
where $K_{n-1}[\hat{x_i}]=K_{n-1}(x_1,\dots,x_{i-1},x_{i+1},\dots, x_n)$. 
\end{prop}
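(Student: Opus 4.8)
The plan is to read the recurrence as an instance of equivariant localization on the incidence variety of common eigenlines. Throughout I assume Conjecture~\ref{conj:CM}. Introduce
\[
\widetilde{\C}_n=\{(A,B,L)\in\C_n\times\PP^{n-1}:\ AL\subseteq L,\ BL\subseteq L\},
\]
with its two projections $\pi\colon\widetilde{\C}_n\to\C_n$ and $\rho\colon\widetilde{\C}_n\to\PP^{n-1}$, and let $\mathcal L\subseteq\CC^n\otimes\mathcal O_{\widetilde{\C}_n}$ be the tautological eigenline (a copy of $\mathcal O(-1)$). All of this is $T$-equivariant, and the whole argument is to compute the equivariant Euler characteristic $\chi(\widetilde{\C}_n,\mathcal L)$ in two ways.

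First, push forward along $\pi$. Over the dense open $U\subseteq\C_n$ where $A$ is regular semisimple, $\pi$ is finite étale of degree $n$ and $\CC^n=\bigoplus_k L_k$ splits into the $n$ eigenlines, so $\pi_*\mathcal L|_U\cong\CC^n\otimes\mathcal O_U$ $T$-equivariantly, where $\CC^n$ is the standard representation of weights $x_1,\dots,x_n$; hence $\operatorname{ch}\pi_*\mathcal L|_U=(x_1+\dots+x_n)\,\operatorname{ch}\mathcal O_U$. I would then upgrade this to the global statement $\chi(\widetilde{\C}_n,\mathcal L)=(x_1+\dots+x_n)\,\chi(\C_n)$. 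Here Conjecture~\ref{conj:CM}, together with smoothness in codimension one, makes $\C_n$ normal, so the tautological map $\pi_*\mathcal L\hookrightarrow\CC^n\otimes\mathcal O_{\C_n}$, being an isomorphism in codimension one between reflexive sheaves, is an isomorphism; Cohen--Macaulayness is what rules out higher direct images $R^i\pi_*\mathcal L$ and embedded-component corrections, so that the Euler characteristic equals the $H^0$ character.

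Second, localize along $\rho$. The $T$-fixed points of $\PP^{n-1}$ are the coordinate lines $p_i=\langle e_i\rangle$, with tangent weights $x_j/x_i$ ($j\neq i$) and $\mathcal L|_{p_i}$ of weight $x_i$. The Atiyah--Bott--Lefschetz fixed-point formula for equivariant Euler characteristics gives
\[
\chi(\widetilde{\C}_n,\mathcal L)=\sum_{i=1}^n\frac{x_i\,\chi\bigl(\rho^{-1}(p_i)\bigr)}{\prod_{j\neq i}(1-x_j/x_i)}.
\]
To evaluate the fibre, write $A,B$ in block form adapted to $\CC^n=\bigl(\bigoplus_{j\neq i}\CC e_j\bigr)\oplus\CC e_i$: the condition $AL\subseteq L$ kills the off-diagonal part of the $i$-th column, the induced quotient pair $(\bar A,\bar B)$ lies in $\C_{n-1}$ in the variables $\{x_j\}_{j\neq i}$, the diagonal entries $A_{ii},B_{ii}$ form an $\mathbb A^2$ of weights $q_1,q_2$, and the residual row entries $A_{ij},B_{ij}$ ($j\neq i$, of weights $q_1x_i/x_j$ and $q_2x_i/x_j$) are constrained by the off-diagonal block of $[A,B]=0$, namely $A_{i\cdot}(\bar B-B_{ii})=B_{i\cdot}(\bar A-A_{ii})$ — these are $n-1$ equations of weight $q_1q_2x_i/x_j$. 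The local claim I must prove is that these $n-1$ equations form a regular sequence, so that the fibre is a complete intersection over $\C_{n-1}\times\mathbb A^2$ and the Koszul complex yields
\[
\chi\bigl(\rho^{-1}(p_i)\bigr)=\frac{\chi(\C_{n-1})[\hat{x_i}]}{(1-q_1)(1-q_2)}\prod_{j\neq i}\frac{1-q_1q_2x_i/x_j}{(1-q_1x_i/x_j)(1-q_2x_i/x_j)}.
\]

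Substituting this into the localization formula, converting $\chi(\C_\bullet)=K_\bullet\,\chi(\g_\bullet(\CC)^2)$ and cancelling the $\g$-factors then reproduces \eqref{eq:sasha} term by term. The main obstacle is precisely the two points where Conjecture~\ref{conj:CM} is indispensable: (i) identifying $\chi(\widetilde{\C}_n,\mathcal L)$ with the naive generic pushforward $(x_1+\dots+x_n)\chi(\C_n)$ with no correction from non-reduced structure or higher cohomology, and (ii) the regularity of the residual commutator equations on each fibre. Both assert the absence of lower-dimensional or embedded contributions to the Hilbert series; dropping Cohen--Macaulayness still controls the top-degree part of every term, which is exactly why Theorem~\ref{thm:mainH} survives unconditionally while \eqref{eq:sasha} does not.
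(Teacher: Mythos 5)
Your construction is the same as the paper's: your $\widetilde{\C}_n$ is exactly the paper's $\F_n$, and your localization half reproduces the paper's computation \eqref{eq:recone} precisely — same fixed-point fibres (killed column entries of weights $q_1x_j/x_i,q_2x_j/x_i$, free row entries and diagonal entries, residual commutator equations of weight $q_1q_2x_i/x_j$), same complete-intersection/Koszul mechanism, same final cancellation of the $\chi(\g_\bullet(\CC)^2)$ factors. One detail there: you state the regular-sequence property as a claim, whereas the paper actually derives it, via the dimension count $\dim\C_{n-1}+2n-(n-1)=n^2+1=\dim f^{-1}(v)$ together with the unmixedness theorem for the Cohen--Macaulay ring of $\C_{n-1}\times\mathbb{A}^{2n+2}$; you correctly identified that this is where Conjecture~\ref{conj:CM} is used, but the dimension count is the missing half of that argument.

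The genuine gap is in the other half, the identification $\chi(\F_n,\mathcal L)=(x_1+\cdots+x_n)\,\chi(\C_n)$. First, the locus where $\pi$ fails to be étale of degree $n$ is the discriminant, which has codimension \emph{one} in $\C_n$ (e.g.\ pairs with $A$ regular but not semisimple, where the fibre is a fat point), so ``isomorphism in codimension one'' is not established by your computation on the regular semisimple locus; moreover $\pi$ has positive-dimensional fibres (over $(0,0)$ the fibre is all of $\PP^{n-1}$), so reflexivity of $\pi_*\mathcal L$ is not automatic, and for the tautological sub-line the natural map lands in $\CC^n\otimes\pi_*\mathcal O_{\F_n}$, not in $\CC^n\otimes\mathcal O_{\C_n}$. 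Second, attributing the vanishing of $R^{i>0}\pi_*$ to Conjecture~\ref{conj:CM} is a misattribution: Cohen--Macaulayness of the target does not control higher direct images, and in the paper this entire step is proven \emph{unconditionally}, by an explicit patching/Čech argument on $U_i=\{v_i\neq 0\}$ using the defining equations \eqref{eq:quadrel} (of degree at most $2$ in $z$) to reduce degrees, yielding $H^0(\F_n,\mathcal O_{\F_n}(1))=S_1$, free over $R$ with basis $v_1,\ldots,v_n$, and $H^{>0}=0$; the conjecture enters only in the localization half. Third, a duality slip: in the paper's conventions (the $x_i$ are inverses of the equivariant parameters, so the coordinate $v_i$ has weight $x_i$ while the vector $e_i$ has weight $x_i^{-1}$), your tautological eigenline $\mathcal O(-1)$ restricts at $p_i$ with weight $x_i^{-1}$ and its generic pushforward has character $(x_1^{-1}+\cdots+x_n^{-1})\chi(\C_n)$, inconsistent with the weight $x_i$ you use in the fixed-point formula; you must work with the dual $\mathcal O_{\F_n}(1)$, whose sections are the $v_i$, as the paper does. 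These are fixable but real: as written, the pushforward identity — the one step the paper devotes its sheaf-cohomology subsection to — is not proved.
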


\begin{proof}
For the purposes of this proof, we need more geometry than in the rest of the paper.
We think of the multi-graded Hilbert series as the pushforward to a point in localized equivariant $K$-theory. Equivalently, the $K$-polynomial of $\C_n$ is viewed as (the pushforward of) the $K_T$-class $[\mathcal O_{\C_n}]$ in the ambient space $\g_n\times\g_n$. Note that $K_T(\g_n\times\g_n)\cong K_T(pt)\cong \ZZ[q_1^{\pm},q_2^{\pm},x_1^\pm,\ldots,x_n^\pm]$ where the variables $q_1,q_2,x_1,\ldots,x_n$ are the {\em inverses}\/ of the equivariant parameters attached to the torus $T$.

Consider
\[
\F_n=\left\{
(v,A,B)\in \PP^{n-1}\times\g_n\times\g_n:
\ [A,B]=0 \text{ and $v$ eigenvector of $A$ and $B$}
\right\}
\]
\junk{is it obvious $\F_n$ is irreducible? yes, it's a fiber bundle over $\PP^{n-1}$ whose fiber we describe below}
There is a natural projective morphism
\begin{align*}
\F_n &\overset{p}{\to} \C_n 
\\
(v,A,B)&\mapsto(A,B)
\end{align*}
which is generically $n$-to-$1$ (choice of one eigenvector among $n$).
There is also the affine morphism $f: \F_n\to \PP^{n-1}$. Since all fibers are isomorphic by $GL_n$-action, the dimension of a fiber is $\dim \F_n - \dim \PP^{n-1} = n^2+1$.

The computation is in two parts:
\begin{itemize}
\item Computing $\pi_*[\mathcal O_{\F_n}(1)]$ by equivariant localization (which relates it to $\pi_*[\mathcal O_{\C_{n-1}}]$ using
$(v,A,B)\mapsto (A|_{\CC^n/v},B|_{\CC^n/v})$).
\item Comparing $p_*[\mathcal O_{\F_n}(1)]$ and $[\mathcal O_{\C_n}]$.
\end{itemize}

\subsubsection{Equivariant localization}\label{sec:equivloc}
Torus fixed points in $\PP^{n-1}$ are basis vectors $e_i$, $i=1,\ldots,n$.
Consider such a fixed point, which up to $\mathcal S_n$ action, we can choose to be $v=e_n$. The fiber is
\[
f^{-1}(v)=\{v\}\times X,\qquad X=\left\{\left(
A=\begin{pmatrix}
A'&0
\\
\alpha&a
\end{pmatrix}
,B=\begin{pmatrix}
B'&0
\\
\beta&b
\end{pmatrix}
\right):\ AB=BA\right\}
\]
More explicitly, the equations are
\begin{align}
A'B'&=B'A'
\\\label{eq:extra}
a\beta+\alpha B' &= b\alpha+\beta A'
\end{align}
In particular note that $(A',B')\in \C_{n-1}$. There are $2n$ extra variables $\alpha,\beta,a,b$ and $n-1$ extra equations, and
$\dim \C_{n-1}+2n-(n-1)=n^2+1=\dim f^{-1}(v)$, so assuming Conjecture~\ref{conj:CM} (Cohen--Macaulay property for $\C_{n-1}$, which implies it for $\C_{n-1}\times \mathbb A^{2n+2}$), the unmixedness theorem asserts that the extra equations \eqref{eq:extra} form a regular sequence. \rem[gray]{I don't know how to prove this without assuming $\C_{n-1}$ to be Cohen--Macaulay.
note that this potentially allows for an inductive proof of CM-ness.}

\rem[gray]{note that one can degenerate -- and then check that we haven't forgotten any extra eqs by using the result at the level of cohomology, which we can prove independently. but even degenerating the extra eqs all the way to $\beta_i A'_{i1}=0$ still requires the nontrivial statement that $A_{i1},\ldots,A_{in}$ form a regular sequence in the commuting scheme.}

We then conclude by standard exact sequence arguments that
\begin{equation}\label{eq:Krec}
[\mathcal O_X]=[\mathcal O_{\C_{n-1}}]\prod_{j=1}^{n-1}(1-q_1 x_j/x_n)(1-q_2 x_j/x_n)(1-q_1q_2 x_n/x_j)
\end{equation}
where $[\mathcal O_X]$ is the $K_T$ class of $\mathcal O_X$ in $\g_n\times \g_n$,
whereas $[\mathcal O_{\C_{n-1}}]$ is the $K_{T}$ class of $\C_{n-1}$ in $\g_{n-1}\times\g_{n-1}$ (where the $T$-action on $\g_{n-1}\times \g_{n-1}$ has a one-dimensional kernel leading to the obvious inclusion
$K_{T/\CC^\times}\subset K_T$, i.e., as a polynomial $[\mathcal O_{\C_{n-1}}]=K_{n-1}$ is independent of $x_n$).
The factors $1-q_1x_j/x_n$ (resp.\ $1-q_2x_j/x_n$) come from the zero column of $A$ (resp.\ $B$) whereas the factors $1-q_1q_2 x_n/x_j$ come from the equations \eqref{eq:extra}.

We have thus managed to compute $[f^{-1}(v)]=[v][\mathcal O_X]$.

The weights at the tangent space of $e_i$ are $x_i/x_j$, $j\ne i$ (recalling that the $x_i$ are the inverses of the equivariant parameters). Pulling back the relation 
\[
1=\sum_{i=1}^n \frac{[e_i]}{\displaystyle\prod_{\substack{j=1\\ j\ne i}}^{n}(1-x_j/x_i)}
\]
in localized equivariant $K$-theory of $\PP^{n-1}$ to $\F_n$ and multiplying by $[\mathcal O_{\F_n}(1)]$, we obtain
\[
[\mathcal O_{\F_n}(1)]
=
\sum_{i=1}^n
x_i[e_i]
[\mathcal O_{\C_{n-1}}](\hat x_i)
\prod_{\substack{j=1\\ j\ne i}}^{n}\frac{(1-q_2 x_j/x_i)(1-q_1 x_j/x_i)(1-q_1q_2 x_i/x_j)}{1-x_j/x_i}
\]
\junk{\[
p_*[\mathcal O_{\F_n}]
=
\sum_{i=1}^n
[\mathcal O_{\C_{n-1}}](\hat x_i)
\prod_{\substack{j=1\\ j\ne i}}^{n}\frac{(1-q_2 x_j/x_i)(1-q_1 x_j/x_i)(1-q_1q_2 x_i/x_j)}{1-x_j/x_i}
\]
this is almost but not quite the formula I want: there's a $x_i$ missing. which suggests we need to push forward the line bundle  $O(1)$ instead:}
Finally, pushing forward to $\g_n\times\g_n$ leads to:
\begin{equation}\label{eq:recone}
p_*[\mathcal O_{\F_n}(1)]
=
\sum_{i=1}^n x_i\,
[\mathcal O_{\C_{n-1}}](\hat x_i)
\prod_{\substack{j=1\\ j\ne i}}^{n}\frac{(1-q_2 x_j/x_i)(1-q_1 x_j/x_i)(1-q_1q_2 x_i/x_j)}{1-x_j/x_i}
\end{equation}

\subsubsection{Pushforward}
\rem[gray]{
To see the subtlety let's consider a simplification of the case $n=2$ with a single matrix (which we may assume w/o loss
of generality traceless):
$
X = \{(v,A)\in \PP^{1}\times\mathfrak{sl}_2:\ v\text{ eigenvector of }A\}
\cong \{(x:y,a,b,c)\in\PP^2\times\CC^3:\ a x^2+b xy+cy^2=0\}
$
There is a projection map $p$ to $\mathfrak{sl}_2\cong\CC^3$ whose generic fiber is two points (the two solutions of the quadratic equation).
The pushforward $p_* \mathcal O_X$ 
of the structure sheaf of $X$ to the affine space $\mathfrak sl_2$ is simply the global functions on $X$ (viewed as a module over
$\mathcal O_{\CC^3}=\CC[a,b,c]$).
Certainly this contains $\CC[a,b,c]$ itself -- these are the functions that factor thru $p$, i.e.,
don't ``tell apart'' the two points of the fiber.
 
Another function is $a \frac{x}{y}$ (this is well-defined even as $a\to0$). Well-definedness
follows from $a\frac{x}{y}+b+c\frac{y}{x}=0$ (so it is defined in the two patches $x\ne0$ and $y\ne0$).

Together these functions form a free $\mathcal O_{\CC^3}$-module of rank $2$. The degree of the generators is
$1$ and $q_1$ respectively.

(In detail, one can compute $H^0$ and $H^1$ by looking at the two patches $z=x/y$ and $z^{-1}$, and noticing that $P(z)-Q(z^{-1})=0\mod az+b+cz^{-1}$ implies one can choose $P$ and $Q$ of degree at most $1$ and then there are two independent coefficients; and $P(z)-Q(z^{-1})$ trivially generates the whole of $\CC[a,b,z,z^{-1}]/(\ldots)$ so $H^1=0$)
Now suppose we want to push forward $\mathcal O_X(1)$ instead. That means we're looking for functions of degree $1$ in $x,y$.
We have an obvious basis $\left<x,y\right>$ with degrees $x_1,x_2$. 
(the detailed proof is the same above -- in fact $r=0,1$ are the only 2 values for which $\mathcal O(r)$ has such simple sheaf cohomology; one expects non free modules for $H^0$ and for $r<0$ one expects $H^1$ too).\\
Alternatively if one only cares about the global sections {\em as a $\CC$-vector space}, one can push forward to $\PP^1$ first. $X$ is actually a vector bundle over $\PP^1$, isomorphic to $\mathcal O(-1)^{\oplus 2}$; pushing forward $\mathcal O_X(r)$ produces $\mathcal O(r)\otimes\text{Sym}((\mathcal O(-1)^{\oplus 2})^\vee)\cong \bigoplus_{n\ge0} \mathcal \mathcal O(r+n)^{\oplus n+1} $ where $n$ is the grading in $A$.
For example for $r=0$ we find dimension $(n+1)^2$ in degree $n$, which matches $\CC[a,b,c]\oplus a \CC[a,b,c]$. For $r=1$, dimension $(n+1)(n+2)$ which matches $\CC[a,b,c]^{\oplus 2}$.}

Now let us try to compute directly the pushforward of $[\mathcal O_{\F_n}(1)]$ under $p:\F_n\to\C_n$.
Write $\C_n=\spec R$ and $\F_n=\proj S$
where $S_0=R$. 
Since $\C_n$ is affine, the pushforward is simply given in terms of the sheaf cohomology groups of $\mathcal O_{\F_n}(1)$ (viewed as modules over $R$).
We claim the following
\[
H^i(\F_n,\mathcal O_{\F_n}(1))\cong
\begin{cases}
S_1&i=0
\\
0&i>0
\end{cases}
\]
\rem[gray]{Of course $H^0(\F_n,\mathcal O_{\F_n}(r))\equiv\Gamma(\mathcal O_{\F_n}(r))\supset S_r$ -- actually, there is a map $S_r\to H^0()$; why is it injective? but equality is nontrivial. in fact, as example above shows, it is wrong at $r=0$; all we know from general theory is that there is a $N$ such that it's true for all $r>N$.}

\rem[gray]{
General theory only tells us that $H^i(\F_n,\cdot)=0$ for $i>n-1$.
Still, one feels that the proof below is silly and should follow from a general theorem.
in particular $H^{>0}=0$ should follow, just as in the example above, from pushing forward to $\PP^{n-1}$ first (using say ex 8.2 p252 of Hartshorne) and identifying the resulting quasi-coherent sheaf as a direct sum of\dots $\mathcal O(k)$ hopefully?}

\begin{proof}
\rem[gray]{Let's try $n=2$. Similar to the above we have now two equations $ax^2+bxy+cy^2=a'x^2+b'xy+c'y^2=0$
as well as $cb'-bc'=ca'-ac'=ba'-ab'=0$. Defining $z=x/y$, we now have
$
P(z)-z Q(z^{-1})=\sum c_i(z) e_i(z)
$
where the $e_i(z)$ are the equations above written in terms of $z$, and the $c_i(z)$ are Laurent polynomials in $z$.
Assume $\deg P>1$. Since the equations are of degree at most $2$ in $z$, the coefficient of $z^2$ of the r.h.s.\ must be the coefficient of $\sum c_i^{pol}(z) e_i(z)$ where $c_i^{pol}(z)$ is {\em polynomial} in $z$. We can therefore subtract the latter from $P$, resulting in a polynomial of lower degree. Therefore we may assume $P$ of degree at most $1$, and the same for $Q$.
Surjectivity of $(P,Q)\mapsto P(z)-z Q(z^{-1})$ is obvious, showing $H^1=0$.
We can also push foward to $\PP^1$ first; without $[A,B]=0$ it should just be a vector bundle over $\PP^1$ isomorphic to $\mathcal O(-1)^{\oplus 4}$ and then we'd get the Sym of its dual, which tensored with any $\mathcal O(r\ge0)$ obviously has no $H^{>0}$. with $[A,B]=0$, ?
}

Let us first write the equations of $\F_n$ explicitly:
\begin{equation}\label{eq:quadrel}
AB=BA\qquad \text{minors of size $2$ of $(v,Av)$ and $(v,Bv)$}=0
\end{equation}

Now define the patches $U_i=\{v_i\ne0\}$.
Let us consider $U_1\cap U_2$ inside $U_1$, $U_2$. Denote by $z=v_1/v_2$, $r_i=v_i/v_2$, $i>2$ and collectively $r=(r_i)$. A section on $U_1$ is given by a polynomial $P(z,r,A,B)$ modulo the relations \eqref{eq:quadrel} rewritten in terms of $z,r,A,B$. Let us call these equations $q_i(z)$ where dependence on $r,A,B$ is suppressed. Note that the $q_i(z)$ are of degree less or equal to $2$ in $z$.
Similarly a section on $U_2$ is given by a polynomial $Q(z^{-1},rz^{-1},A,B)$ modulo the same equations expressed in terms of $z^{-1},rz^{-1},A,B$. The $\mathcal O(1)$ matching condition on $U_1\cap U_2$ is
\begin{equation}\label{eq:sheafcoho}
P(z)-z Q(z^{-1}) = \sum_i c_i(z) q_i(z)
\end{equation}
 where the $c_i(z)$ are Laurent polynomials in $z$.
 Now assume that $P$ is of degree greater than $1$ in $z$. Then its top coefficient is equal to the top coefficient of $Q:=\sum_i c^{pol}_i(z) q_i(z)$ where $c^{pol}_i(z)$ is the part of $c_i(z)$ with nonnegative powers of $z$. By subtracting $Q$ from $P$, we can reduce the degree of $P$ until it is at most $1$.

This way, one can show that the restriction of a global section to $U_i$ is of degree in $v_i/v_j$ at most $1$ for all $i\ne j$; this immediately implies that the $v_i$ generate the global sections of $\mathcal O(1)$.
Linear independence is obvious (it can be tested at the generic point).

\eqref{eq:sheafcoho} also implies that the map $(P,Q)\mapsto P(z)-z Q(z^{-1})$ is surjective, so that $H^1(\mathcal O_{\F_n}(1))=0$. Vanishing of higher cohomology follows similarly.
\end{proof}

Since $H^0(\mathcal O_{\F_n}(1))\cong S_1$ is a free $R$-module with basis $(v_1,\ldots,v_n)$ and higher sheaf cohomology vanishes, we conclude that
\begin{equation}
\label{eq:rectwo}
p_*[\mathcal O_{\F_n}(1)]
=
[\mathcal O_{\C_n}]\sum_{j=1}^n x_j
\end{equation}
where $x_j$ is the weight of the basis element $v_j$.

Combining \eqref{eq:recone} and \eqref{eq:rectwo} leads to the recurrence relation \eqref{eq:sasha}.
\end{proof}

We are now in a position to prove Theorem~\ref{thm:mainK}
by relating the shuffle element $\kappa_n$ and the $K$-polynomial $K_n$. $K_n$ is uniquely determined by \eqref{eq:sasha} and the initial condition $K_0=1$. Similarly $\kappa_n$ can be computed recursively with (\ref{eq:kappa_rec2}). By comparing (\ref{eq:kappa_rec2}) with \eqref{eq:sasha} (and checking the trivial initial condition $\kappa_0=K_0=1$) we conclude that
\begin{align}
    \label{eq:kappa_K}
        \kappa_n(x_1,\ldots,x_n) =
        \frac{t^{n(n-1)/2}}{V_n(x)}
        K_n(x_1,\dots, x_n)
\end{align}
Combining (\ref{eq:feqkappa}) with \eqref{eq:kappa_K} gives us the statement of Theorem~\ref{thm:mainK}.

\subsection{Proof of Theorem~\ref{thm:mainH}}
Theorem~\ref{thm:mainH} involves computing the multidegree of $\C_n$, which means one needs to work in equivariant cohomology.
The situation is much simpler than in equivariant $K$-theory (which was needed for Theorem~\ref{thm:mainK}), and we only sketch the corresponding geometric argument. By abuse of notation, we use the same symbols
$q_1,q_2,x_1,\ldots,x_n$ for the cohomology equivariant parameters. This is
consistent with our passage from $K$-polynomial to multidegree, which was defined in the introduction as the substitution $q_i\mapsto 1-q_i$ and $x_i\mapsto 1-x_i$ and then expanding at leading order. We refer to this limit in what follows as the GRR (Grothendieck--Riemann--Roch) limit.

View the flag variety $\mathcal B_n$ as the variety of Borel subalgebras of 
$\mathfrak{gl}_n$ and consider
$\tilde{\F}_n = \{(\mathfrak b,A,B)\in \mathcal B_n \times \mathfrak{gl}_n \times \mathfrak{gl}_n:\ [A,B]=0,\ A,B\in \mathfrak b\}$.
There is a projection map $p$ from $\tilde{\F}_n$ to $\C_n$
which is generically $n!$-to-$1$ (generic matrices are diagonalizable);
one then obtains by equivariant localization techniques (similar to \S\ref{sec:equivloc}) the formula
\begin{align}\notag
D_n &=\frac{1}{n!} \prod_{\substack{i,j=1\\i\ne j}}^n(q_1+x_i-x_j)(q_2+x_i-x_j)
\ \sum_{w\in\mathcal{S}_n}
w\left(
\prod_{1\le i<j\le n} \frac{q_1+q_2+x_i-x_j}{(x_j-x_i)(q_1+x_i-x_j)(q_2+x_i-x_j)}
\right)
\\\label{eq:mdegsym}
&=
\text{Sym}
\left(
\prod_{1\le i<j\le n} \frac{(q_1+q_2+x_i-x_j)(q_1-x_i+x_j)(q_2-x_i+x_j)}{x_j-x_i}
\right)
\end{align}
where the symmetric group acts by permuting variables, and $\text{Sym}$ denotes symmetrization w.r.t.\ to it.

One can think of this expression as the GRR limit of the shuffle element
$\frac{1}{n!}f(A_1*\cdots*A_1)$ (studied in \S \ref{sec:proof}). \footnote{It should be noted
that the element $f(A_1*\cdots*A_1)$ itself has a geometric interpretation
\cite{Ginzburg-isospectral} as the Hilbert series of the coordinate ring of the normalization of the isospectral commuting variety $\mathfrak{X}_{\mathrm{norm}}$.
What we are finding here is that to go from $K$-theory to cohomology, instead of using the structure sheaf of $\C_n$ one can use the combinatorially simpler structure sheaf of $\mathfrak X_{\mathrm{norm}}$.}

At $t=1$, from \eqref{eq:defZstar}, one has the identity in the center of the symmetric group algebras
$A_1*\cdots*A_1= n!\, 1_n$, so that in the GRR limit, $\frac{1}{n!}f(A_1*\cdots*A_1)$ and $f(1_n)$ coincide. As was already remarked in \S \ref{sec:intro-commut},
expanding the weights \eqref{eq:wtK}, which correspond to $f(1_n)$, in the GRR limit, result in the weights \eqref{eq:wtH}.
This leads to the formula of Theorem~\ref{thm:mainH}.

\rem[gray]{doesn't work in $K$-theory because there's no reason the structure sheaf is sent to the structure sheaf --
well, it isn't, it's not even 1-to-1 -- though at $n=2$ everything is OK}

A few comments are in order. In \cite{artic32,artic33} another expression for the multidegree of the commuting variety was obtained, using apparently unrelated methods.
There is actually a connection: consider Proposition~\ref{prop:exch}
and take the GRR limit.
One recovers one of the exchange relations of the work above (cf \cite[Eq.~(3.10)]{artic32} or \cite[Prop.~6]{artic33}). The expression for the multidegree of $\C_n$ there is essentially a repeated application of these identities; it is not obvious if it is more or less useful than our (new) formula \eqref{eq:mdegsym}.
(The extension of these ideas to $K$-theory will be investigated
elsewhere \cite{KZJ-progress}.)

The expression of Theorem~\ref{thm:mainH}, in contrast, has many desirable properties; for instance, it is division- and subtraction-free. Regarding positivity, let us only consider the bidegree with respect to scaling of the two matrices $A$ and $B$, i.e., set the $x_i$ to zero. On the one hand, \eqref{eq:mdegsym} specializes to the bidegree being the constant term of the Laurent polynomial
$\frac{1}{n!}\prod_{1\le i<j\le n}\frac{ (u_i-u_j)(q_1-u_i+u_j)(q_2-u_i+u_j)(q_1+q_2+u_i-u_j)}{u_iu_j}$, which clearly involves subtractions.
On the other hand, the expression in \eqref{eq:mainH} is a polynomial in $q_1$ and $q_2$ with manifestly positive coefficients. If we further restrict to the ordinary degree, i.e., set $q_1=q_2=1$, then we have the
simple formula
\begin{equation}\label{eq:degC}
\deg \C_n = \sum_{\substack{\text{lattice paths }P\\\text{on the }n\times n\text{ grid}\\\text{with identity connectivity}}} 2^{\#\{(i,j):\text{ bends}\}-n}
\end{equation}
\eqref{eq:degC} is also extremely effective computationally; here is
the degree of $\C_{12}$ (computed in a few seconds on a laptop):
\[
\deg \C_{12} = 1862632561783036151478238040096092649
\]
(note that this is beyond the currently known entries of
\href{https://oeis.org/A029729}{OEIS A029729}). 

\subsection{The $\mathcal S_3$ symmetry}\label{sec:bi}
As an obvious corollary of \eqref{eq:kappa_x} of Proposition~\ref{prop:kappa}, one has:
\begin{cor}
$\kappa_n$ possesses the $\mathcal S_3$ symmetry of permutations of $q_1,q_2,q_3$.
\end{cor}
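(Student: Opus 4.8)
The plan is to prove the $\mathcal{S}_3$-invariance of $\kappa_n$ by induction on $n$, using the recurrence \eqref{eq:kappa_x}. The essential observation is that every ingredient entering that recurrence is already manifestly symmetric under permutations of $q_1,q_2,q_3$. Indeed, the shuffle product $*$ is built from the kernel $\omega(x,y)=\frac{(x-q_1 y)(x-q_2 y)(x-q_3 y)}{(x-y)^3}$ of \eqref{eq:omega}, whose numerator is a symmetric function of $q_1,q_2,q_3$; hence $*$ commutes with the action of any permutation $\pi\in\mathcal{S}_3$ on the parameters. Moreover the generator $x_1\in\mathcal{A}_1=\mathbb{F}[x^{\pm1}]$ carries no dependence on the $q_i$, and the prefactor $x_1+\dots+x_n$ on the left-hand side of \eqref{eq:kappa_x} is likewise $q$-independent.

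First I would record the base case: $\kappa_0=1$ is a constant, so it is trivially invariant under permutations of $q_1,q_2,q_3$. For the inductive step, suppose $\kappa_{n-1}$ is $\mathcal{S}_3$-invariant. Applying a permutation $\pi$ of the parameters to \eqref{eq:kappa_x}, and using that $\pi$ fixes $x_1+\dots+x_n$ and $x_1$ while commuting with $*$, I find that $\pi(\kappa_n)$ satisfies
\[
(x_1+\dots+x_n)\,\pi(\kappa_n)=\pi(\kappa_{n-1})*x_1=\kappa_{n-1}*x_1.
\]
This is exactly the relation \eqref{eq:kappa_x} that determines $\kappa_n$. By the uniqueness asserted in Proposition~\ref{prop:kappa}, it follows that $\pi(\kappa_n)=\kappa_n$, completing the induction.

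The only point that requires any care — and the reason the statement is a corollary rather than a tautology — is that one must know a priori that $\kappa_n$ is a genuine element of $\mathcal{A}_n$, uniquely pinned down by \eqref{eq:kappa_x}, rather than merely a rational expression obtained by dividing by $x_1+\dots+x_n$. This is precisely the content of Proposition~\ref{prop:kappa}, which I would invoke directly. Granting it, the argument uses no term-by-term computation: the $\mathcal{S}_3$-symmetry is inherited structurally from the symmetry of $\omega$, and thus I do not expect any substantive obstacle.
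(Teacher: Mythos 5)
Your argument is correct and is essentially the paper's own proof: the corollary is stated there as an immediate consequence of \eqref{eq:kappa_x} of Proposition~\ref{prop:kappa}, which is exactly the induction you spell out, resting on the manifest $q_1,q_2,q_3$-symmetry of the kernel $\omega$ in \eqref{eq:omega} and the $q$-independence of $x_1$ and $x_1+\dots+x_n$. The only unspoken check your appeal to uniqueness in $\mathcal{A}_n$ implicitly uses is that a permutation $\pi$ of $q_1,q_2,q_3$ maps $\mathcal{A}_n$ to itself --- the wheel conditions \eqref{eq:p_wheel} are permuted among themselves thanks to $q_1q_2q_3=1$, and the limit conditions \eqref{eq:limit1}--\eqref{eq:limit2} are $q$-independent --- a routine verification (alternatively, uniqueness of the division by $x_1+\dots+x_n$ in the field of rational functions already suffices).
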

In view of \eqref{eq:feqkappa}, the same holds for $f(1_n)$; this symmetry is however not obvious at the level of the partition function.

In a similar vein, assuming Conjecture~\ref{conj:CM}, this also implies
the symmetry property
\[
K_n(q_1,q_2)=K_n(q_2,q_1)=(q_1q_2^2)^{n(n-1)/2} K_n(q_1,(q_1q_2)^{-1})
\]
\begin{ex}Here are the coefficients of $K_3|_{x_i=1}$:
\begin{center}
\begin{tikzpicture}[x={(0.577cm,-1cm)},y={(0.577cm,1cm)},scale=0.87]
\foreach\i/\j/\k in
{0/0/1,1/1/-8,1/2/1,1/3/1,2/1/1,2/2/29,2/3/-16,2/4/1,3/1/1,3/2/-16,3/3/-27,3/4/29,3/5/-8,3/6/1,4/2/1,4/3/29,4/4/-16,4/5/1,5/3/-8,5/4/1,5/5/1,6/3/1}
\node at (\i,\j) {$\k$};
\draw[dotted,->] (0,0) -- (0,1) node[right] {$\ss q_1$};
\draw[dotted,->] (0,0) -- (1,0) node[right] {$\ss q_2$};
\end{tikzpicture}
\end{center}
\end{ex}

It is not a priori obvious why the $K$-polynomial of the commuting scheme $\C_n$ should possess the second symmetry property. Here we propose an ``explanation'':
\footnote{This is to be compared with the following two toroidal algebra representations: the one coming from the Hilbert scheme of points in $\CC^2$, which does not have manifest $\mathcal S_3$ symmetry, and the one coming from the Hilbert scheme of points in $\CC^3$, which does.}
\begin{conj}
Consider the triple commuting scheme
\[
\hat \C_n
:=
\{
(A,B,C)\in \g_n^3:\ [A,B]=[A,C]=[B,C]=0
\}
\]
and its grading w.r.t.\ scaling $(A,B,C)\mapsto (q_1 A,q_2 B,q_3 C)$ and conjugating by diagonal matrices.
Then its $K$-polynomial $\hat K_n$ satisfies
\[
\hat K_n(q_1,q_2,(q_1q_2)^{-1})
= K_n(q_1,q_2) K_n(q_1^{-1},q_2^{-1})
\]
\end{conj}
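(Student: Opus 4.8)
The plan is to explain the factorization as a \emph{dimensional reduction} phenomenon, with the third matrix $C$ playing the role of the ``cotangent'' directions and the Calabi--Yau constraint $q_1q_2q_3=1$ forcing a Serre-type self-duality. Concretely, I would realize $\hat\C_n$ as the subscheme of $\C_n\times\g_n$ (the last factor parametrizing $C$) cut out by the two families of equations $[A,C]=0$ and $[B,C]=0$; set-theoretically its fibre over $(A,B)\in\C_n$ is the common centralizer $\mathfrak z(A,B)=\{C:[A,C]=[B,C]=0\}$, which is $n$-dimensional over the regular locus (where it equals $\CC[A,B]$) but jumps in dimension elsewhere. After the substitution of Theorem~\ref{thm:mainK}, the target identity becomes a statement about $[\mathcal O_{\hat\C_n}]$ in $K_T(\g_n^3)$, and I would try to show it equals the ``norm'' $[\mathcal O_{\C_n}]\cdot[\mathcal O_{\C_n}]^\vee$ under $q_3=(q_1q_2)^{-1}$, where $(\cdot)^\vee$ inverts the equivariant parameters.

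The first concrete step is to package the two commutators into a single self-dual complex: the operators $\mathrm{ad}_A,\mathrm{ad}_B:\g_n\to\g_n$ acting on $C$ fit into a Koszul-type/Tate complex built from exterior powers of $\g_n\oplus\g_n$ (with $C$-weights $q_3 x_ix_j^{-1}$), and under the specialization $q_1q_2q_3=1$ the $K_T$-class of $\g_n$ in the $C$-directions is Serre-dual to itself (the relevant canonical weight being trivial), so that a resolution of $\mathcal O_{\hat\C_n}$ built from this complex would manifestly produce a product of a class with its parameter-inverse. In parallel, I would derive a recurrence for $\hat K_n$ by the eigenvector-bundle technique of Proposition~\ref{prop:sasha}: introduce $\hat\F_n=\{(v,A,B,C)\in\PP^{n-1}\times\g_n^3:[A,B]=[A,C]=[B,C]=0,\ v\text{ a common eigenvector}\}$, compute $p_*[\mathcal O_{\hat\F_n}(1)]$ by localization at the fixed eigenvectors $e_i$ (the zero columns of $A,B,C$ contribute $(1-q_1x_j/x_i)(1-q_2x_j/x_i)(1-q_3x_j/x_i)$, and the three families of off-diagonal commutation equations the remaining factors), and compare with $p_*[\mathcal O_{\hat\F_n}(1)]=[\mathcal O_{\hat\C_n}]\sum_j x_j$. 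One would then check that, after setting $q_3=(q_1q_2)^{-1}$, this recurrence is compatible with the relation satisfied by $K_n(q_1,q_2)K_n(q_1^{-1},q_2^{-1})$, match the initial condition $\hat K_0=1$, and verify the base cases $n\le2$ by hand.

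The hard part is that $\hat\C_n$ is neither irreducible nor known to be Cohen--Macaulay: the family $\hat\C_n\to\C_n$ is not flat (the centralizer $\mathfrak z(A,B)$ jumps over the non-regular locus, so even on the main component the naive fibre factor $\prod_{i\ne j}(1-q_3x_ix_j^{-1})$ is \emph{wrong}), and correspondingly the equations $[A,C]=[B,C]=0$ do not form a regular sequence in the fibre of $\hat\F_n$. Thus the unmixedness argument that made Proposition~\ref{prop:sasha} work is unavailable, and neither is the Koszul complex above a priori a resolution nor the localization fibre a complete intersection. The essential difficulty is to show that the correction terms from the jumping loci reorganize exactly into the dual factor; I expect this to require either a genuine perfect-obstruction-theory/$d$-critical statement realizing the self-dual complex as computing $[\mathcal O_{\hat\C_n}]$ (which is where the Calabi--Yau symmetry enters intrinsically), or a separate treatment of the main component via a normalization of $\hat\C_n$ in the spirit of the isospectral variety $\mathfrak X_{\mathrm{norm}}$ used in \S\ref{sec:commut}, combined with an inclusion--exclusion over the extra components. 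A secondary obstacle is that a product of two solutions of the first-order recurrence for $K_n$ does not itself obey a first-order recurrence, so the recurrence-matching step will need this self-duality input rather than a purely combinatorial induction.
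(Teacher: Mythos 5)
You should be aware at the outset that the paper does not prove this statement: it is Conjecture~2, offered only as a heuristic ``explanation'' of the $\mathcal S_3$ symmetry $K_n(q_1,q_2)=(q_1q_2^2)^{n(n-1)/2}K_n(q_1,(q_1q_2)^{-1})$, so there is no proof in the paper to compare yours against, and your text is in any case a research program rather than a proof --- you say so yourself. As a program it is sensible (the critical-locus/Calabi--Yau reading of $q_1q_2q_3=1$ via $W=\tr(A[B,C])$, and the Serre-duality reading of $K_n(q_1^{-1},q_2^{-1})$ as the class of a canonical module, are the natural frameworks), but both of your concrete steps fail as stated, not merely for lack of a Cohen--Macaulay hypothesis. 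For the localization recurrence: in the pair case the fibre over a fixed eigenvector was $\C_{n-1}\times\CC^{2n}$ cut by $n-1$ extra equations of the expected codimension, and CM-ness was only needed to promote the dimension count to a regular sequence. In the triple case the analogous count is provably wrong for every $n\ge2$: the fibre of $\hat\F_n\to\PP^{n-1}$ has dimension at least $\dim\hat\C_n-(n-1)=n^2+n+1$, while the complete-intersection count $\dim\hat\C_{n-1}+3n-3(n-1)=n^2+2$ is strictly smaller, so the three families of mixing equations cut codimension at most $2(n-1)$ instead of $3(n-1)$. One sees this explicitly at $n=2$: writing $A=\left(\begin{smallmatrix}A'&0\\ \alpha&a\end{smallmatrix}\right)$ etc., the three mixing equations are $\alpha(b-B')-\beta(a-A')$, $\alpha(c-C')-\gamma(a-A')$, $\beta(c-C')-\gamma(b-B')$, i.e.\ exactly the $2\times2$ minors of the $2\times3$ matrix with rows $(\alpha,\beta,\gamma)$ and $(a-A',b-B',c-C')$, a determinantal ideal of codimension $2$, not $3$. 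Hence the product of factors you predict at each fixed point (``zero columns plus three families of commutation equations'') is not the $K_T$-class of the fibre, and the recurrence you would derive from it is false from the first step --- no unmixedness theorem can rescue it, because the codimension itself is wrong.

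The second route has a matching gap: self-dual obstruction theories on $\mathrm{Crit}(W)$ produce factorization statements for \emph{virtual} (symmetrized) structure sheaves, whereas $\hat K_n$ is the Hilbert series of the honest, possibly non-reduced and non-CM coordinate ring; identifying the two is precisely the unproven content, and there is no general mechanism for it (your own example of the discrepancy between $[\mathcal O_{\C_n}]$ and the isospectral sheaf in \S\ref{sec:commut} shows how delicate such identifications are). Compounding this, $\hat\C_n$ is known to be reducible for large $n$ (already around $n\ge29$), so any ``main component plus inclusion--exclusion'' or normalization strategy must control components about which essentially nothing is known; and, as you correctly note, $K_n(q_1,q_2)K_n(q_1^{-1},q_2^{-1})$ satisfies no first-order recurrence of the type \eqref{eq:sasha}, so even a corrected recurrence for $\hat K_n$ would not close the argument by induction alone. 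In short: your diagnosis of where the difficulties lie is accurate and your framing is the right one, but every load-bearing step is either demonstrably false in its naive form (the localization product formula) or an open problem (virtual-to-naive comparison, component structure), so this remains a plan consistent with the statement's status in the paper as an open conjecture.
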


\rem{more generally, what's the behavior of our constructions/morphisms under permutation of $q_i$s? any special meaning to the invariant sector?}

\gdef\MRshorten#1 #2MRend{#1}%
\gdef\MRfirsttwo#1#2{\if#1M%
MR\else MR#1#2\fi}
\def\MRfix#1{\MRshorten\MRfirsttwo#1 MRend}
\renewcommand\MR[1]{\relax\ifhmode\unskip\spacefactor3000 \space\fi
\MRhref{\MRfix{#1}}{{\scriptsize \MRfix{#1}}}}
\renewcommand{\MRhref}[2]{%
\href{http://www.ams.org/mathscinet-getitem?mr=#1}{#2}}
\bibliographystyle{amsalphahyper}
\bibliography{biblio}

\newcommand{\etalchar}[1]{$^{#1}$}
\def\cprime{$'$} \def\cprime{$'$}
\providecommand{\bysame}{\leavevmode\hbox to3em{\hrulefill}\thinspace}
\providecommand{\MR}{\relax\ifhmode\unskip\space\fi MR }
\providecommand{\MRhref}[2]{%
  \href{http://www.ams.org/mathscinet-getitem?mr=#1}{#2}
}
\providecommand{\href}[2]{#2}
\begin{thebibliography}{FHH{\etalchar{+}}09}

\bibitem[BW18]{BW-coloured}
A.~Borodin and M.~Wheeler, \emph{Coloured stochastic vertex models and their
  spectral theory}, 2018,
  \href{http://arxiv.org/abs/1808.01866}{\path{arXiv:1808.01866}}.

\bibitem[DFZJ06]{artic32}
P.~Di~Francesco and P.~Zinn-Justin, \emph{Inhomogeneous model of crossing loops
  and multidegrees of some algebraic varieties}, Comm. Math. Phys. \textbf{262}
  (2006), no.~2, 459--487,
  \href{http://arxiv.org/abs/math-ph/0412031}{\path{arXiv:math-ph/0412031}}.
  \MR{MR2200268 (2006k:82068)}.

\bibitem[FHH{\etalchar{+}}09]{FHHSY}
B.~Feigin, K.~Hashizume, A.~Hoshino, J.~Shiraishi, and S.~Yanagida, \emph{A
  commutative algebra on degenerate {$\mathbb{CP}^1$} and {M}acdonald
  polynomials}, {J. Math. Phys.} \textbf{50} (2009), no.~9, 095215,
  \href{http://arxiv.org/abs/0904.2291}{\path{arXiv:0904.2291}}.

\bibitem[FO97]{FO}
B.~Feigin and A.~Odesskii, \emph{A family of elliptic algebras}, {Int. Math.
  Res. Not.} \textbf{1997} (1997), no.~11, 531--539.

\bibitem[FT11]{FT-shuffle}
B.~Feigin and A.~Tsymbaliuk, \emph{Equivariant {$K$}-theory of {H}ilbert
  schemes via shuffle algebra}, {Kyoto J. Mat.} \textbf{51} (2011), no.~4,
  831--854, \href{http://arxiv.org/abs/0904.1679}{\path{arXiv:0904.1679}}.

\bibitem[Gin11]{Ginzburg-isospectral}
V.~Ginzburg, \emph{Isospectral commuting variety, the {H}arish-{C}handra
  {$D$}-module, and principal nilpotent pairs}, 2011,
  \href{http://arxiv.org/abs/1108.5367}{\path{arXiv:1108.5367}}.

\bibitem[IO09]{IO-shuffle}
A.~P. Isaev and O.~V. Ogievetsky, \emph{Braids, shuffles and symmetrizers},
  Journal of Physics A: Mathematical and Theoretical \textbf{42} (2009),
  no.~30, 304017,
  \href{http://arxiv.org/abs/0812.3974}{\path{arXiv:0812.3974}},
  {\scriptsize\href{http://dx.doi.org/10.1088/1751-8113/42/30/304017}{\path{doi:10.1088/1751-8113/42/30/304017}}}.

\bibitem[Ize87]{Iz-6v}
A.~Izergin, \emph{Partition function of a six-vertex model in a finite volume},
  Dokl. Akad. Nauk SSSR \textbf{297} (1987), no.~2, 331--333. \MR{MR919260
  (88m:82040)}.

\bibitem[KBI97]{KBI}
V.~Korepin, N.~Bogoliubov, and A.~Izergin, \emph{Quantum inverse scattering
  method and correlation functions}, vol.~3, Cambridge University Press, 1997.

\bibitem[Kor82]{Kor}
V.~Korepin, \emph{Calculation of norms of {B}ethe wave functions}, Comm. Math.
  Phys. \textbf{86} (1982), no.~3, 391--418. \MR{MR677006 (83m:81078)}.

\bibitem[KZJ]{KZJ-progress}
A.~Knutson and P.~Zinn-Justin, work in progress.

\bibitem[KZJ07]{artic33}
\bysame, \emph{A scheme related to the {B}rauer loop model}, Adv. Math.
  \textbf{214} (2007), no.~1, 40--77,
  \href{http://arxiv.org/abs/math.AG/0503224}{\path{arXiv:math.AG/0503224}}.
  \MR{MR2348022 (2008j:14097)}.

\bibitem[Lam18]{JLam}
J.~Lamers, \emph{The functional method for the domain-wall partition function},
  SIGMA Symmetry Integrability Geom. Methods Appl. \textbf{14} (2018), paper
  64, 23, \href{http://arxiv.org/abs/1801.09635}{\path{arXiv:1801.09635}},
  {\scriptsize\href{http://dx.doi.org/10.3842/SIGMA.2018.064}{\path{doi:10.3842/SIGMA.2018.064}}}.

\bibitem[Las06]{Lascoux-Frob}
A.~Lascoux, \emph{The {H}ecke algebra and structure constants of the ring of
  symmetric polynomials}, 2006,
  \href{http://arxiv.org/abs/math/0602379}{\path{arXiv:math/0602379}}.

\bibitem[Mac98]{Macdonald2}
I.~Macdonald, \emph{Symmetric functions and {H}all polynomials}, Oxford
  University Press, 1998.

\bibitem[MT55]{MT-commut}
T.~S. Motzkin and O.~Taussky, \emph{Pairs of matrices with property {L}. {II}},
  Transactions of the American Mathematical Society \textbf{80} (1955), no.~2,
  387--401,
  {\scriptsize\href{http://dx.doi.org/10.2307/1992996}{\path{doi:10.2307/1992996}}}.

\bibitem[Neg14]{Ng}
A.~Negu{\c{t}}, \emph{The shuffle algebra revisited}, {Int. Math. Res. Not.}
  \textbf{2014} (2014), no.~22, 6242--6275,
  \href{http://arxiv.org/abs/1209.3349}{\path{arXiv:1209.3349}}.

\bibitem[Neg16]{Ng-Pieri}
\bysame, \emph{The $m/n$ {P}ieri rule}, Int. Math. Res. Not. \textbf{2016}
  (2016), no.~1, 219--257,
  \href{http://arxiv.org/abs/1407.5303.}{\path{arXiv:1407.5303.}}

\bibitem[Pop08]{Popov-commut}
V.~L. Popov, \emph{Irregular and singular loci of commuting varieties},
  Transformation Groups \textbf{13} (2008), 819--837,
  \href{http://arxiv.org/abs/0801.3074}{\path{arXiv:0801.3074}},
  {\scriptsize\href{http://dx.doi.org/10.1007/s00031-008-9018-9}{\path{doi:10.1007/s00031-008-9018-9}}}.

\bibitem[Ram97]{Ram-seminormal}
A.~Ram, \emph{Seminormal representations of {W}eyl groups and
  {I}wahori--{H}ecke algebras}, Proceedings of the London Mathematical Society
  \textbf{75} (1997), no.~1, 99--133,
  \href{http://arxiv.org/abs/math/9511223}{\path{arXiv:math/9511223}}.

\bibitem[Shi06]{Sh}
J.~Shiraishi, \emph{A family of integral transformations and basic
  hypergeometric series}, {Comm. Math. Phys.} \textbf{263} (2006), no.~2,
  439--460,
  \href{http://arxiv.org/abs/math/0501251}{\path{arXiv:math/0501251}}.

\bibitem[SV13]{SV-Hall}
O.~Schiffmann and E.~Vasserot, \emph{The elliptic {H}all algebra and the
  {$K$}-theory of the {H}ilbert scheme of {$\mathbb{A}^2$}}, {Duke Math. J.}
  \textbf{162} (2013), no.~2, 279--366,
  \href{http://arxiv.org/abs/0905.2555}{\path{arXiv:0905.2555}}.

\bibitem[WW15]{Wan-Frob}
J.~Wan and W.~Wang, \emph{Frobenius map for the centers of {H}ecke algebras},
  Trans. Amer. Math. Soc. \textbf{367} (2015), no.~8, 5507--5520,
  \href{http://arxiv.org/abs/1208.4446}{\path{arXiv:1208.4446}}.

\end{thebibliography}
\end{document}